\documentclass[11pt,reqno]{amsart}

\usepackage{natbib}
\bibpunct[, ]{(}{)}{,}{a}{}{,}%
\usepackage{setspace}
\usepackage{amsmath,amssymb,amsfonts, mathrsfs,  array, stmaryrd,  indentfirst, amsthm,  hyperref, comment, color, mathtools,bbm}
\usepackage{caption}
\captionsetup[figure]{font=small}
\usepackage{subcaption}
\usepackage[ruled,vlined]{algorithm2e}
\usepackage{graphicx, enumitem, tabularx}
\usepackage{textcomp}
\usepackage{xcolor}
\usepackage{verbatim}
\usepackage[normalem]{ulem}
\usepackage[margin=1.0in]{geometry}
\usepackage{todonotes}
\usepackage{fancyhdr}
\captionsetup[subfigure]{labelfont=rm}
\singlespacing

\setlength{\parskip}{8pt}
\numberwithin{equation}{section}
\newtheorem{lemma}{Lemma}[section]
\newtheorem{thm}{Theorem}[section]
\newtheorem{defn}{Definition}[section]
\newtheorem{prop}{Proposition}[section]
\newtheorem{cor}{Corollary}[section]

\newtheorem{remark}{Remark}[section]

\newcommand{\comm}[1]{}
\makeatletter
\newcommand*{\rom}[1]{\expandafter\@slowromancap\romannumeral #1@}
\makeatother

\DeclarePairedDelimiter\ceil{\lceil}{\rceil}
\DeclarePairedDelimiter\floor{\lfloor}{\rfloor}

\definecolor{aoE}{rgb}{0.0, 0.5, 0.0}
\definecolor{dblue}{rgb}{0.0, 0.18, 0.39}
\newcommand{\Added}[1]{{{{\color{black}{#1}}}}}
\allowdisplaybreaks[4]

\begin{document}

\pagestyle{plain}
\pagenumbering{arabic}

\title{Learning-based Optimal Admission Control in a Single Server Queuing System
}

\footnote{This is the final version of the paper. To appear in {\it Stochastic Systems.}}

\author[A. Cohen]{Asaf Cohen }
\address{Department of Mathematics\\
University of Michigan\\
Ann Arbor, MI 48109\\
United States}
\email{shloshim@gmail.com }

\author[V.G. Subramanian]{Vijay G. Subramanian}
\address{Department of Electrical Engineering and Computer Science\\
	University of Michigan\\
	Ann Arbor, MI 48109\\
	United States}
\email{vgsubram@umich.edu }
\author[Y. Zhang]{Yili Zhang}
\address{Department of Mathematics\\
	University of Michigan\\
	Ann Arbor, MI 48109\\
	United States}
\email{zhyili@umich.edu }

\thanks{A.C. is partially supported by the NSF grant DMS-2006305; V.S. is supported in part by NSF grants CCF-2008130, ECCS-2038416, CNS-1955777 and CMMI-2240981.}

\begin{abstract}
	We consider a long-term average profit maximizing admission control problem in an M/M/1 queuing system with unknown service and arrival rates. With a fixed reward collected upon service completion and a cost per unit of time enforced on customers waiting in the queue, a dispatcher decides upon arrivals whether to admit the arriving customer or not based on the full history of observations of the queue-length of the system. 
    \cite[Econometrica]{Naor} showed that if all the parameters of the model are known, then it is optimal to use a static threshold policy---admit if the queue-length is less than a predetermined threshold and otherwise not. We propose a learning-based dispatching algorithm and characterize its regret with respect to optimal dispatch policies for the full information model of \cite{Naor}. 
    We show that the algorithm achieves an $O(1)$ regret when all optimal thresholds with full information are non-zero, and achieves an \Added{$O(\ln^{1+\epsilon}(N))$ regret for any specified $\epsilon>0$,} in the case that an optimal threshold with full information is $0$ (i.e., an optimal policy is to reject all arrivals), where $N$ is the number of arrivals.
\end{abstract}
\keywords{Queueing systems with uncertainty, reinforcement learning}
\subjclass[2010]{68M20, 93E35}
\maketitle

\section{Introduction}
	We consider admission control for a first-in-first-out (FIFO) single-class single-server queuing model with Poisson arrivals and exponential service times. Specifically, there is a dispatcher that decides on admitting arrivals with the goal to maximize the long-term average profit -- each admitted arrival yields a positive reward $R$ (obtained after a customer finishes service), which is balanced by a holding cost for the (homogeneous) customers waiting in the queue. The buffer capacity of this queue is infinite and the dispatcher may decide upon arrivals to reject any customers joining the queue with the profit objective in mind. When the service and arrival rates are known, this model was studied 
    in \cite{Naor}. In our investigation, we will consider the situation where the dispatcher does not have knowledge of either the arrival rate or the service rate. 
    One potential application is the job dispatching problem for online computing demands, especially when the computing servers are provided by a third-party cloud computing platform: the dispatcher may negotiate the reward and cost with the customers, and thus, have information (via market research) on the arrival rate of the jobs, but since the servers are provided by a third-party platform, the dispatcher may not know the service rate. Despite prior market research, it is, however, plausible that the dispatcher doesn't know the arrival rate accurately.
	
  \cite{Naor} studied two problems: 1) the optimal policy for the self-optimization problem where customers are maximizing their own net (expected) profit so that a selfish Wardrop equilibrium is of interest; as well as 2) the optimal policy for the social welfare maximization problem where a dispatcher is aiming at maximizing the long-term average profit so that a social Wardrop equilibrium is of interest. In both problems, a threshold policy was shown to be optimal: 1) in the self-optimization problem, arrivals do not join the queue if the queue-length upon arrival is high enough; and 2) in the social-welfare maximization problem, the dispatcher doesn't admit arrivals whenever a threshold level is reached.  
        \cite{Naor} showed that the threshold for the social welfare maximization problem is not greater than the threshold for the self-optimization problem. Our investigation and the accompanying algorithm are primarily designed for the social welfare optimization problem where the dispatcher is interested in learning how to perform at the same level of efficiency as if knowing the actual arrival and service rate. Any learning-based algorithm will necessarily need exploration which could violate incentive-compatibility constraints (even \emph{ex-ante} and not only \emph{ex-post}) of individual utility maximizing agents. Hence, we do not consider the self-optimization version of the problem in this manuscript.
	 
	In our analysis, we will couple two queuing systems: a {\it learning system}, whose dispatcher does not know the arrival and service rate {\it apriori}, and a {\it genie-aided} system, whose dispatcher has full information of the model parameters. We refer to the corresponding algorithm and dispatcher of the two systems as the {\it learning algorithm}, {\it learning dispatcher} and {\it genie-aided algorithm}, {\it genie-aided dispatcher}, respectively. Our figure of metric at a given time $t$ will be the difference between the net expected profits of a genie-aided algorithm and the learning algorithm, i.e., the expected regret. 
	
 
	 \paragraph{\bfseries Contributions:} We propose a learning-based dispatching algorithm that achieves an $O(1)$ regret when (genie-aided) optimal algorithms use a non-zero threshold, and achieves an \Added{$O(\ln^{1+\epsilon}(N))$ regret for any specified $\epsilon>0$} when it is optimal to use threshold $0$, where $N$ denotes the number of arrivals~\footnote{We show how to translate the regret from the number of arrivals to a time horizon.}; \Added{see Remark~\ref{rem:order} for a refinement on the achievable regret}. Our learning-based algorithm consists of batches with each batch being composed of an optional forced exploration phase (phase $1$) and an exploitation phase (phase $2$) whose length increases with batch index. The exploration phase is omitted if 
  there are new samples collected from the exploitation phase that just ended. Our learning algorithm uses samples collected from all the exploitation phases as well as from any exploration phases; the former is important if the exploration phase is omitted. 
	 
	 For the system studied in \cite{Naor}, not all values of the unknown model parameters result in a unique optimal static threshold policy. For some specific choices of the model parameters, there exist two optimal static thresholds, and therefore all the policies that stochastically alternate between the two static optimal thresholds also achieve the optimal long-term average profit. As mentioned earlier, we are interested in analyzing the regret -- defined to be the difference between the expected profit of the learning and genie-aided systems. When the optimal policy is unique, there is no ambiguity in the definition of the regret as there is a fixed optimal policy to compare against. However, when there are multiple policies that are optimal, we need to specify a particular optimal policy that we are comparing against. Among the multiple optimal policies, we compare against a policy with a specific way of randomizing between the two static optimal thresholds, and then we prove that we can achieve similar regret as when there exists a unique optimal policy, which is of order $O(1)$ when both thresholds are positive, and of order \Added{$O(\ln^{1+\epsilon}(N))$ for any specified $\epsilon>0$} when $0$ is an optimal threshold and $N$ is the number of customers that have arrived; 
     \Added{Remark~\ref{rem:order} applies with non-unique thresholds too}. 
	 
	 In our setting, we do not exclude the case where the genie-aided dispatcher uses a static threshold $0$, 
  and hence, rejects all customers. This leads to a balancing act for the dispatcher: quickly transitioning to reject all customers if the true threshold is $0$ versus admitting customers infinitely often otherwise (based on the optimal threshold), and all of this while not being aware of the true optimal admission policy. With this in mind, for learning to not stall, the existence of the exploration phase is crucial when the true threshold is positive. A naive learning scheme that only uses the empirical average service time as an estimate of the unknown parameter may perform poorly: a few extremely long service times at the beginning may mislead the learning dispatcher to think that the service rate is low, and hence, result in it not accepting customers into the queue even when the genie-aided dispatcher uses a non-zero threshold; \Added{see plots in Section~\ref{section:Numerical}}. 
	 
	\paragraph{\bfseries Related work:} 
	On the topic of finding optimal controls vis-a-vis individual and social welfare maximization, there are many models that have studied generalizations of the model introduced in \cite{Naor}. 
    \cite{Knudsen} generalized the model in \cite{Naor} to multiple servers with a non-linear cost for customers waiting in the system. The reward for customers served is constant and customers arrive according to a Poisson process. The service times of the customers are exponentially distributed and are independent of the identity of the currently active server. 
    \cite{Lippman76} studied a single queue model with Poisson arrivals and non-decreasing, concave service rate with respect to the number of customers in the system. The holding cost per unit of time for each customer is constant and the rewards for the customers entering the system are \emph{i.i.d.} random variables with finite mean. The authors first considered the discounted net profit in the finite horizon case (in terms of the total number of admissions and service completions), and then extended the analysis to the non-discounted and infinite horizon case. 
    \cite{Johansen80} studied the problem of finding the optimal admission policy of a system with general service and arrival processes. In the problem's setting, the net profit is discounted and the authors considered the finite horizon (in terms of the number of arriving customers) case. The rewards of the customers are \emph{i.i.d.} random variables with finite mean and the non-negative waiting cost is a function of the number of customers in the system as well as the total number of past arrivals. All the works \cite{Knudsen,Lippman76,Johansen80} compared the optimal policy for the individual and social welfare maximization problems and showed that the optimal policies for both optimization problems are threshold policies that depend on the rewards of customers. Moreover, they also showed that the optimal threshold for the social welfare maximization  problem is no greater than the individual maximization problem.\Added{ Assuming a  random arrival rate, \cite{unobqueue} showed that the optimal thresholds for the social welfare maximization problem are no larger than the individual maximization problem when either the queue length is observable or unobservable. They also showed that the optimal threshold for the revenue maximization problem may not coincide with the social welfare maximization problem when the queue is unobservable.}
	
	 Learning unknown parameters to operate optimally in queuing systems, and analyzing queuing systems with model uncertainly have both been studied under various settings -- see the tutorial \cite{walton2021} for a recent overview.  Our paper focuses on regret analysis in comparison with an optimal algorithm when the parameters are known. Under this framework, there is growing literature considering different models and various types of regret. 
  \cite{subramanian2022} considered an Erlang-B blocking system with unknown arrival and service rates, where a customer is either blocked or receives service immediately. The authors proposed an algorithm that observes the system upon arrivals and converges to the optimal policy that either admits all customers when there is a free server, or blocks all customers.  In our setting, the queue has infinite capacity, customers may wait in the queue, and the dispatcher observes the whole history of the queue-length when making a decision. The reward of admitting a customer in both our paper and \cite{subramanian2022} is only realized in the future as it involves knowledge of service times and (in our case also) waiting times, and the expected net profit requires knowledge of the arrival and service rates; this precludes the direct use of Reinforcement Learning based methods discussed in \cite{sutton2018reinforcement} and \cite{bertsekas2019reinforcement}. Stability is always assured in \cite{subramanian2022} since the maximum system occupancy is bounded (finite number of servers with no queuing). 
The queuing system is stable under any optimal policy for the problem we consider. However, under an arbitrary learning dispatcher, the supremum of the queue-lengths may be unbounded when the service rate is unknown. We will discuss the impact of this on our analysis in Section~\ref{subsection:LearningAlg}. 
  \cite{cmu1} first considered a discrete-time single-server queuing system with multi-class customers and unknown service rates, and then modified and extended their algorithms to parallel multi-server queuing systems, again with multi-class customers. In the model customers of class $i$ have (per unit-time) waiting cost $c_i$ when waiting in the queue and Bernoulli services with the service success probability at server $j$ being $\mu_{i,j}$ for class $i$ (i.e., geometrically distributed service-times). They proposed a $ c\mu$-rule-based algorithm that achieves constant regret compared to using the $c\mu$ rule with the true service rates. The $c\mu$ rule prioritizes the service of customers of type $i$ at server $j$ when $c_i\mu_{i,j}$ is higher. Optimality of the $c\mu$ rule  has been proved in various settings, especially in the single server case; see \cite{Smith_cmu,Shwartzcmu,Buyukkoc}  and \cite[Chapter 3]{Cox}. 
  \cite{Zhong2022} considered the problem of learning the optimal static scheduling policy in a multi-class many-server queuing system with time-varying Poisson arrivals. Customers of type $i$ have exponentially distributed patience with rate $\theta_i$ and exponentially  distributed service requirements with rate $\mu_i$. Unlike in \cite{cmu1}, where stability is not guaranteed for arbitrary scheduling policies, the impatience of the customers helps to stabilize the queue without any extra requirements on the scheduling policy. The authors compared their Learn-Then-Schedule learning algorithm with the $c\mu/\theta$-rule 
  and showed that their learning algorithm achieves a $\Theta(\log(T))$ regret where $T$ is the (finite) time-horizon. For a discrete-time multi-class parallel-server system, when compared to the algorithm which matches a queue to a server for which the success service probability is the highest among all possible matches of this queue to any other server, 
  \cite{Krishnasamy2021} used a multi-armed bandit viewpoint and proposed Q-UCB and Q-Thompson sampling algorithms that achieve  $O (\mathrm{poly}(\log (T))/T)$ queue-regret as the time horizon $T$ goes to infinity. 
  \cite{stahlbuhk2020} focused on a single-server discrete-time queue, and showed the existence of queue-length-based policies that can achieve an $O(1)$ regret. When each server has its own queue, \cite{choudhury2021} studied the discrete-time routing problem when service rate and queue-length are not known. Taking a Markov Decision Process (MDP) viewpoint, \cite{Agrawal2019} considered a discrete-time inventory control problem where orders to be made arrive with delay and the decision-maker observes solely the sales and not the demands. Thereafter, a holding cost is collected for each unit of the good that is in storage. At each time step, the decision-maker needs to make new orders and aims to minimize the total expected holding cost. The authors studied the problem of learning the proper units of orders to be made at each time step when the distribution of the demand is unknown. The algorithm they proposed achieves an $O(\sqrt{T})$ regret (for horizon $T$) when compared to the best base-stock policy. 
  
  \Added{With the goal of stabilizing the queues and also minimizing penalties enforced in a discrete-time system, \cite{maxweightNeely} proposed an algorithm that learns a set of Max-Weight functionals that depend on the unknown underlying distribution, and make two-stage decisions (which are shown to correspond to scheduling choices in illustrated examples). The proposed algorithm stabilizes the system considered and achieves at most linear regret in the accumulated penalties when compared to the optimal controller. Considering a scheduling problem with unknown arrival and channel statistics, \cite{Krishnasamy_2018MW} studied a wireless scheduling problem with switching costs. Under their proposed explore-exploit policy with the exploration probability going to $0$ slowly and together with a Max-Weight scheduling policy using learned statistics, the network is shown to be stable and the algorithm achieves at most linear regret in the accumulated switching and activating  cost when comparing to the optimal scheduler with the knowledge of the model statistics. The error bound on the long-term average in both works can be made arbitrarily small (when compared to the optimal cost) by changing algorithm parameters. Instead of having explicit exploration, \cite{LeiYing23} studied a discrete-time multi-server queuing system, and proposed a Max-Weight with discounted Upper Confidence Bound (UCB) scheduling algorithm. Their main result shows the stability of the queuing system under the proposed algorithm. }

  \Added{There is a growing literature that studies online dynamic pricing in service systems using queuing models. We discuss some relevant recent work next. The authors of \cite{chen2022online} considered optimal pricing with congestion in a $GI/GI/1$ queue where there is unit cost depends on the service rate, the arrival rate depends on the service fee, and where customers experience congestion given by the average queue-length of the system. As the cost as a function of the service rate and the dependence of the arrival rate in chosen price is unknown, the authors proposed a gradient-based online learning algorithm that achieves a sub-linear regret when compared with the accumulated profit obtained with the optimal service rate and fee (using steady-state quantities). Also considering an online learning version of finding a proper price amongst a finite set of prices, \cite{JiaPricing} considered a multi-server queuing model with Poisson arrivals and exponential services where the dependence of arrival and service rates price chosen is unknown (with the values unknown as well but such that the load for each choice is strictly less than $1$). Two online batch processing algorithms based on UCB and Thompson sampling are proposed in \cite{JiaPricing}. Both algorithms achieve sub-linear regret (optimal up to logarithmic factors) when compared with the accumulated profit achieved by the optimal price choice.}
  
	 In our work, we consider a paradigm where there's uncertainty in the model parameters. A different type of uncertainty, often called  Knightian uncertainty, was studied in \cite{atar2022scheduling}, \cite{cohen2019}, \cite{cohen20192}, and \cite{cohen2021} for multi-class queuing systems in the heavy traffic regime. In these models, the decision-maker is looking for robust control for a class of models. The uncertainty is modeled by including an adversarial player who chooses a worst-case scenario. Hence, the robust control problem is formulated via a stochastic game between the decision maker and the adverse player. Optimality is then characterized by studying Stackelberg equilibria.
	
	\paragraph{\bfseries Outline of paper:}  In Section \ref{section:ProblemSetUp} we introduce the model, propose our learning algorithm and state our main results. In Section \ref{section:Preliminary}, we state some preliminary results, including the properties of the coupling introduced in Section \ref{section:ProblemSetUp}. Section \ref{section:UniqueThresh} and \ref{section:MultiThresh} are devoted to the analysis of our learning algorithm and include the proof of our main results. Section \ref{section:Numerical} provides the finite-time performance of our algorithm via simulations. In section \ref{section:Conclusion} we summarize our result. 
	
	\section{The learning problem and the main results}\label{section:ProblemSetUp} 
	
	In this section, we introduce the stochastic model and the learning algorithm. Specifically, in Section \ref{subsection:KnownParam} we introduce the optimal admission control problem for the queuing system studied 
    in~\cite{Naor}. In this model, all the parameters are known. The same model but with unknown service and arrival rate 
    is introduced in Section \ref{subsection:LearningSystem}. We couple the models with known and unknown parameters so that we can characterize the regret of our learning dispatcher. Our learning algorithm is provided in Section \ref{subsection:LearningAlg}. Finally, in Section \ref{subsection:MainResults} we state the main results.
	
	\subsection{The stochastic model with known parameters}\label{subsection:KnownParam}
    \cite{Naor} studied the self-optimization and social welfare maximization problems for the following model. Homogeneous customers arrive at a single server queue according to a Poisson process with a rate $0<\lambda<\infty$. When a customer arrives, and only then, the dispatcher decides whether to admit this customer to the queue or not.  A customer that is not admitted (i.e., rejected) leaves and does not return. An admitted customer remains in the queue until being served. Upon service completion, the dispatcher receives a reward $R>0$. Once the service is completed, the customer leaves the queue. The dispatcher suffers from a waiting/holding cost at the rate of $C>0$ per time unit for each customer in the queue until service completion. The service requirements for the customers are \emph{i.i.d.}~EXP($\mu$) (i.e., exponentially distributed random variables with the rate $0<\mu<\infty$). The dispatcher's goal is to maximize the social welfare, i.e., to maximize the long-term average profit accrued by serving customers -- the ergodic-reward maximization problem.  Let $Q(t)$ denote the queue-length of the system at time t, $N_A(t)$ denote the number of customers that arrived at the system until and including time $t$, then for an admission policy $\rho$ the long-term average profit can be expressed as:
	\begin{align}
		\liminf_{T\rightarrow \infty}\frac{1}{T}\left(\sum_{i = 1}^{N_A(T)} R\mathbbm{1}_{\{\text{Policy $\rho$ admits customer $i$}\}} - \int_{0}^{T} CQ(t)dt\right), \label{def:Long-termAverageProfit}
	\end{align}
where throughout the paper, $\mathbbm{1}_{ A}$ is the indicator function of event $A$: namely, $\mathbbm{1}_{ A} = 1$ if $A$ happens and $0$ otherwise.
	
	The optimal admission policy of the dispatcher in \cite{Naor} is a static \emph{threshold policy}. That is, there is a threshold that depends on the parameters of the model, such that the dispatcher admits an arriving customer if and only if the queue-length upon arrival is strictly below this threshold. 
    \cite{Naor} studied optimal admission control for the ergodic cost minimization problem by choosing the best threshold value among all possible thresholds. 
	 When the dispatcher uses a static threshold policy with a threshold $K$, the result is an $M/M/1/K$ queueing system. The queue-length process of such a system has a stationary distribution and is also ergodic. Note that the optimal threshold can then be determined by computing the expected reward using the stationary distribution of the $M/M/1/K$ queueing system for all possible values of $K$. Using this logic 
    \cite{Naor} characterized the optimal threshold via the function $V: \mathbb{N} \times (0,\infty)^2 \rightarrow [0,\infty)$, \Added{given by:
	\begin{equation}\label{fun:V}
			V(K,y,z)= 
			\begin{cases}
				\frac{K(y-z) - z(1-(z/y)^K)}{(y-z)^2}, & \text{if } y\neq z,\\
				\frac{K(K+1)}{2y}, & \text{if } y = z.
			\end{cases}
		\end{equation}
The following proposition states a few properties of this function $V(\cdot,\cdot,\cdot)$. 
\begin{prop}\label{prop:FuncVProp}The following hold:
	\begin{itemize}[leftmargin=*]
		\item[1.] For all fixed $K$, the function $V(K,\cdot,\cdot)$ is continuous in its domain.
		\item[2.] For all fixed $(y,z)$, $V(K, y,z)$ is strictly increasing in $K$. 
	\end{itemize}
\end{prop}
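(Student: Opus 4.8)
The plan is to collapse the two-branch definition of $V$ into a single closed form that is manifestly a finite sum of continuous, monotone-in-$K$ terms, and then read off both claims. Concretely, I would show that for every $K\in\mathbb N$ and all $y,z>0$,
\begin{equation*}
V(K,y,z)=\sum_{l=0}^{K-1}(K-l)\,z^{l}y^{-l-1}.
\end{equation*}
Once this identity is in hand both parts are immediate: the right-hand side is a finite sum of functions that are continuous on $(0,\infty)^2$, which is part~1; and telescoping gives $V(K+1,y,z)-V(K,y,z)=\sum_{l=0}^{K}z^{l}y^{-l-1}>0$ for every $(y,z)\in(0,\infty)^2$, which is part~2 (the convention $V(0,\cdot,\cdot)=0$, an empty sum, is consistent with both branches of \eqref{fun:V}).

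To establish the identity I would first treat the case $y\neq z$. Writing $r=z/y\in(0,\infty)\setminus\{1\}$, the numerator in the first branch of \eqref{fun:V} equals $y\bigl(K-(K+1)r+r^{K+1}\bigr)$ and the denominator equals $y^{2}(1-r)^{2}$, so it suffices to factor $p_K(r):=K-(K+1)r+r^{K+1}$ by $(1-r)^{2}$. Note $p_K(1)=p_K'(1)=0$, so such a factorization exists; explicitly, using the geometric identity $r^{m}-1=(r-1)\sum_{j=0}^{m-1}r^{j}$ first with $m=K+1$ and then inside a sum with $m=i$,
\begin{equation*}
p_K(r)=\bigl(r^{K+1}-1\bigr)-(K+1)(r-1)=(r-1)\sum_{i=0}^{K}\bigl(r^{i}-1\bigr)=(r-1)^{2}\sum_{l=0}^{K-1}(K-l)\,r^{l},
\end{equation*}
the last step being a count of how often each power $r^{l}$ occurs. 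Dividing by $(y-z)^2=y^2(1-r)^2$ yields $V(K,y,z)=\tfrac1y\sum_{l=0}^{K-1}(K-l)r^{l}$, i.e.\ the claimed formula for $y\neq z$. For the diagonal $y=z$ I would simply evaluate the right-hand side of the claimed formula at $y=z$: since $\sum_{l=0}^{K-1}(K-l)=\sum_{m=1}^{K}m=\tfrac{K(K+1)}{2}$, it reduces to $\tfrac{K(K+1)}{2y}$, matching the second branch of \eqref{fun:V}. Hence the unified formula holds on all of $(0,\infty)^2$.

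With the identity proved, part~1 is just the observation that a finite sum of polynomials in $z$ and $y^{-1}$ is continuous on $(0,\infty)^2$, and part~2 is the telescoping computation above. The only step with any actual content is the algebraic factorization of $p_K$ together with the matching check on the diagonal; everything else is bookkeeping. If one wished to avoid the unified formula, part~1 could instead be obtained by noting that $V(K,\cdot,\cdot)$ is a ratio of continuous functions away from the diagonal and computing the diagonal limit via a second-order Taylor expansion of $r\mapsto r^{K+1}$ about $r=1$ (equivalently two applications of L'Hôpital's rule), and part~2 from the identity $V(K+1,y,z)-V(K,y,z)=\bigl(1-(z/y)^{K+1}\bigr)/(y-z)$ for $y\neq z$ and $=(K+1)/y$ for $y=z$, which is positive in every case. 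I expect the unified-formula route to be the cleanest to write down.
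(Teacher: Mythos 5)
Your proof is correct, and it takes a genuinely different route from the paper. You establish the single closed form $V(K,y,z)=\sum_{l=0}^{K-1}(K-l)z^{l}y^{-l-1}$ (which I have verified: the factorization $K-(K+1)r+r^{K+1}=(r-1)^2\sum_{l=0}^{K-1}(K-l)r^l$ is right, and the diagonal evaluation $\sum_{l=0}^{K-1}(K-l)=K(K+1)/2$ matches the second branch of \eqref{fun:V}), and then both continuity and strict monotonicity are read off in one line each: a finite polynomial-type sum in $(z,1/y)$ for part~1, and the telescoping increment $\sum_{l=0}^{K}z^{l}y^{-l-1}>0$ for part~2. The paper, by contrast, proves part~1 by re-expressing $V$ in terms of the stationary distribution of the $M/M/1/K$ queue (as a ratio $\tfrac{E_{K-1}-E_K}{p^K_K-p^{K-1}_{K-1}}\cdot\tfrac1z$ of manifestly continuous pieces with nonvanishing denominator), and proves part~2 by extending $V$ to real $K$, differentiating, and invoking the inequality $\ln x>1-1/x$; it also splits that argument into the cases $y=z$ and $y\neq z$. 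Your unified-formula approach is more elementary (no probabilistic interpretation, no calculus), avoids the case split, and yields a slightly stronger result --- the explicit expansion of $V$ itself --- while the paper's route has the virtue of explaining why $V$ has the queueing-theoretic meaning it does. Either is a complete proof; yours is the cleaner derivation of the proposition as stated.
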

\begin{proof}
	Note that when $K = 0$, $V(0, y ,z ) = 0 $  for all $(y,z) \in (0,\infty)^2$. Consider any  point $(K,y,z) \in \mathbb{N}^{+} \times (0,\infty)^2 $. In order to prove the continuity of $V$, it will be easier to rely on an alternative formulation of $V$ based on the stationary distribution which we now provide. Let $p^{K}_{i}$ denote the stationary probability of having the queue-length equal to $i$ and let $E_{K}$ denote the stationary expected queue-length when using the threshold policy with a threshold $K$. One can show that:
	\begin{align*}
 V(K,y,z)  = \frac{E_{K-1}- E_{K}}{p^{K}_{K} - p^{K-1}_{K-1}}\frac{1}{z},\qquad
	\text{ where } \qquad
 	    p^{K}_i  = \frac{(z/y)^i}{\sum_{i = 0}^{K}(z/y)^i}\quad \text{ and }\quad
		E_{K} = \sum_{i = 0}^{K}ip_{i}^K.
	\end{align*}
    Clearly, when $(y,z)\in (0,\infty)^2$, $1/z$, $E_{K}$, $E_{K-1}$, $p^{K}_{K}$ and $p^{K-1}_{K-1}$ are all continuous in $(y,z)$. Moreover, $p^{K-1}_{K-1} \neq p^{K}_{K}$ for all $(y,z)\in (0,\infty)^2$.

Now, let us consider the function $V(K, y,z)$ for any fixed $(y,z) \in (0,\infty)^2$. To show the monotonic increasing property, we consider the function $f: [0,\infty) \rightarrow[0,\infty)$, $f(K) = V(K,y,z)$ by extending the definition of $V(\cdot,\cdot,\cdot)$ to real-valued $K$. From \eqref{fun:V}, it follows that  when $y = z$, $f(K)$ is strictly increasing. Now, we focus on the case $y \neq z$. Computing the derivative of $f(K)$, we get:
\begin{align*}
	f'(K) = \frac{(y - z) + z(z/y)^{K} \ln(z/y)}{(y-z)^2}.
\end{align*}
Using the inequality $\ln(x) > 1 - 1/x$ for all $x >0, x\neq 1$, we get:
\begin{align*}
	 (y - z) + z(z/y)^{K} \ln(z/y) >  (y - z) + z(z/y)^{K} (1- y/z)
	 = (y-z)(1 - (z/y)^K)
	 > 0,
\end{align*}
for all $y\neq z$. This shows that $f(K)$ is strictly increasing, which implies that $V(K, y,z)$ is strictly increasing in $K$ for all fixed $(y,z) \in (0,\infty)^2$.
\end{proof}

Using these properties 
\cite{Naor} showed that for every service rate $\mu$ and arrival rate $\lambda$ the following inequalities for integer $x$
	\begin{align}\label{eq:ThresholdIneq}
		V\left(x,\mu,\lambda \right)\leq \frac{R}{C} < V\left(x+1,\mu,\lambda \right)
	\end{align} 
	have a unique solution $x = \bar{K}$, and this $\bar{K}$ is an optimal admittance threshold for the problem considered. Moreover, when $V\left(\bar{K},\mu,\lambda \right)< R/C$, the optimal threshold is unique. However, when $V\left(\bar{K},\mu ,\lambda \right) =R/C$, both $\bar{K}$ and $\bar{K}-1$ are optimal thresholds; hence, any policy that randomizes between the two thresholds at each arrival 
    is also optimal\footnote{We discuss what we mean by ``optimal" in Remark~\ref{rem:optimal} after we specify the strategy to which we compare our learning algorithm in the case that there are multiple optimal thresholds.}.
 
	Let $m:=1/\mu$ and $\nu:=1/\lambda$ denote the average service time and the average inter-arrival times respectively.  Consider a pair of the true service  and arrival rates $(\mu, \lambda)$ for which there exists a unique optimal threshold and the corresponding $\bar{K}$ satisfying \eqref{eq:ThresholdIneq} with strict inequalities. 
    Proposition \ref{prop:FuncVProp} implies that there exist $\delta_1>0$ and $\delta_2 >0$, both depending on $\mu$ and $\lambda$, such that for all pairs of points $(\hat{m},\hat{\nu})$, where \begin{align}\label{eq:RegionOfParam}
		m-\delta_1<\hat{m}<m+ \delta_1 \quad \text{ and } \quad \nu-\delta_2<\hat{\nu}<\nu+ \delta_2, 
	\end{align}
	we have: 
    \begin{align}\label{eq:CorrectThreshold}
		V(\bar{K}, 1/\hat{m},1/\hat{\nu}) < \frac{R}{C} < V(\bar{K}+1, 1/\hat{m},1/\hat{\nu}). 
	\end{align}
	That is, if one can estimate the average service time and the average inter-arrival time accurately so the inequality \eqref{eq:RegionOfParam} is satisfied, one can obtain the corresponding $\bar{K}$ by solving \eqref{eq:ThresholdIneq} using $1/\hat{m}$ and $1/\hat{\nu}$ instead of $\mu$ and $\lambda$. 
	
	When equality holds in \eqref{eq:ThresholdIneq}, for pairs of the true service and arrival rates $(\mu, \lambda)$  and the corresponding $\bar{K}$ that satisfies $V(\bar{K}, \mu,\lambda) = R/C$, there exist $\tilde{\delta}_1>0$ and $\tilde{\delta}_2>0$, both depending on $\mu$ and $\lambda$, such that for all pairs of points $(\hat{m}, \hat{v})$ where
	\begin{align}\label{eq:ICRegionOfParam}
		m-\tilde{\delta}_1<\hat{m}<m+ \tilde{\delta}_1 \quad \text{ and }\quad  \nu-\tilde{\delta}_2<\hat{\nu}<\nu+ \tilde{\delta}_2, 
	\end{align}
	we have:
	\begin{align}\label{eq:ICCorrectThreshold}
		V(\bar{K}-1, 1/\hat{m},1/\hat{\nu}) <  \frac{R}{C} < V(\bar{K}+1,  1/\hat{m},1/\hat{\nu}).
	\end{align}
	That is, as long as the estimated average service time and average inter-arrival time are accurate enough to satisfy inequality \eqref{eq:ICRegionOfParam}, the integer solved from inequality \eqref{eq:ThresholdIneq} using $1/\hat{m}$ and $1/\hat{\nu}$ in place of $\mu$ and $\lambda$ will be in the set of optimal thresholds, that is, $\{ \bar{K}-1, \bar{K}\}$.
}

	\subsection{The learning system and the genie-aided system}\label{subsection:LearningSystem}
	We assume that the reward $R$ and the cost per time unit $C$ are known to the learning dispatcher, but neither the service rate $\mu$ nor the arrival rate $\lambda$. Consider again the potential application of job dispatch for online computing demands. When the computation clusters are provided by a third-party cloud computing platform, the dispatcher of the online computing jobs may not have knowledge about the configuration of the servers and their service rate. \Added{The dispatcher may also be unfamiliar with the customer type that demands services, and therefore may only possess limited knowledge of the arrival rate. }
 In our model, the dispatcher continuously observes the queue-length \Added{and past admission control decisions}. Hence, we restrict the dispatcher to admission controls that at the time of a new arrival, admit or reject based on the entire history of the queue-length until the arrival time, \Added{and also the past admission control decisions}. We call such controls {\it admissible}. Note that based on the FIFO serving discipline that's used, we can infer the time to enter service for all customers entering service by time $t$, and also the departure epochs for all the customers departing (after completing service) by $t$. Therefore, when a new customer arrives, the dispatcher can estimate the  mean service time (also the service rate) using the service times of the customers that have departed before the new arrival, and use it for admission control. \Added{Further, knowledge of all past admission control decisions enables the dispatcher to obtain information on all past inter-arrival times, which will then be used to compute the statistics for the arrival process, i.e., the arrival rate.}
	
	We measure the performance of a policy chosen by the learning dispatcher by the regret it incurs in comparison to an optimal policy. Specifically, we use the difference between the expected net profit under the given learning-based control/policy and the best expected net profit the dispatcher could have obtained had it known the \Added{parameters $\mu$ and $\lambda$}. To rigorously define the regret, we introduce some relevant processes for both the genie-aided and the learning systems.
	
	We will use the marker $\bar{\quad}$ to denote processes associated with the {\it genie-aided system} (dispatcher knows $\mu$ and $\lambda$). The processes without a marker are associated with the {\it learning system} (dispatcher does not know $\mu$ and $\lambda$). 
	We let 
	\begin{itemize}[leftmargin=*]
		\item $\bar{Q}(t)$ and $Q(t)$ denote the queue-length at time $t$; 
		\item $\bar{Q}_i$ and $Q_i$ denote the queue-length right before the arrival of the $i^{th}$ customer; 
		\item $\bar{N}_A(t)$ and $N_A(t)$ denote the number of customers that have arrived at the system until and including time $t$; 
		\item $\bar{N}_{\mathrm{join}}(t)$ and $N_{\mathrm{join}}(t)$ denote the number of customers that have joined the queue until and including time $t$; 
		\item$\bar{T}^A_i$ and $T^A_i$ denote the arrival time of the $i^{th}$ customer to the system (i.e., $\bar{T}^A_i = \inf\{ t: \bar{N}_A(t)\geq i\}$ and $T^A_i = \inf\{ t: N_A(t)\geq i\}$, respectively); 
		\item $\bar{K}_i$ and $K_i$ denote the threshold policy used by the respective dispatchers at the arrival of the $i^{th}$ customer.
	\end{itemize}

	\subsubsection{A coupling between the two systems}\label{coupling}
	Consider a probability space $(\Omega,\mathcal{F},\mathbb{P})$ rich enough to support two independent Poisson processes $(P(t))_{ t\geq 0}$ and $(N_A(t))_{t\geq0}$ with rates $\mu$ and $\lambda$, respectively. Set  $ \bar{N}_A = N_A $ so the arrival processes to both systems are the same. 
	 Let $T^{PD}_i$ denote the $i^{th}$ jump time of $P$. The service requirements of the customers that are being served at time $t$ by all systems to be analyzed are determined as follows: the head of the line customer of each system (assuming not empty) completes her service at the time of the next jump of $P(t)$. Note that it may be the case that the services of the currently in-service customers are initiated at different times for the learning and genie-aided systems. Nevertheless, because the exponential distribution is memoryless, this does not change the distribution of the random process corresponding to the two systems, and in particular the distribution of the customer's service times. In other words, the time between the beginning of a service of a customer and the next jump of $P$ is  EXP($\mu$) distributed. Hence, we refer to $P(t)$ as the {\it potential departure process}, and to $\{T^{PD}_i\}_{i\geq1}$ as the  {\it potential departure times}, i.e., when there is a jump in $P$, and the queue-length is larger than 0, there will be a departure of a customer, but when the queue-length is 0, i.e., no customer is being served, this potential departure is \emph{wasted}. Therefore, $\{P(T^A_i)-P(T^A_{i-1})\}_{i \geq 1}$ is the number of potential services between two consecutive arrivals for both systems.

	Now, we will use the underlying processes $\bar{N}_A = N_A$ and $P$ to couple the queue-length processes of both systems assuming that a threshold policy is used in each system. 
	Consider a sequence of random variables $\{ K_i\}_{i\geq 0}$ taking vales in $\mathbb{N}$, such that each $K_i$ is measurable with respect to the filtration generated by the queue-length until time $T^A_i$: since $T^A_i$ is a stopping time for the filtration being used, we can define the $\sigma$-algebra $\mathcal{F}_{T^A_i}:=\mathcal{F}_i$ (for short) using the original filtration $\mathcal{F}_T=\sigma(Q(t): t\leq T)$ in the usual way (See \cite{Durrett}). We use $\{ K_i\}_{i\geq 0}$ as a sequence of thresholds. Similarly, we use $\{ \bar{K}_i\}_{i\geq0}$ to denote the sequence of thresholds used by the genie-aided dispatcher. We refer to any such $ \{ K_i\}_{i\geq 0}$ as a threshold policy. For the coupled genie-aided and learning systems, we have the following: for any $i \geq 1$,
	\begin{align*}
		Q_i &= \left( Q_{i-1}+ \mathbbm{1}_{\{ Q_{i-1} < K_{i-1} \}} -(P(T^A_{i})- P(T^A_{i-1}))\right)^{+},\\
		\text{and }\bar{Q}_i &= \left( \bar{Q}_{i-1}+ \mathbbm{1}_{\{ \bar{Q}_{i-1} < \bar{K}_{i-1}\}} -(P(T^A_{i})- P(T^A_{i-1}))\right)^{+},
	\end{align*}
	where for $x\in \mathbb{R}$, $(x)^+:=\max(x,0)$. Similarly, we have: 
	\begin{align}
		Q(t) &= \left(Q_n + \mathbbm{1}_{\{Q_n < K_n\}} - (P(t) - P(T^A_n))\right) ^{+} ,\label{def:q(t)1}\\
		\text{and }\bar{Q}(t) &= \left(\bar{Q}_n + \mathbbm{1}_{\{\bar{Q}_n < \bar{K}_n\}} - (P(t) - P(T^A_n))\right) ^{+} ,\label{def:q(t)2}
	\end{align}
	where $n := \max\{m: T^A_m < t \}$. Once the initial queue-lengths $Q_0$ and $\bar{Q}_0$ are specified in $\mathbb{Z}_+$, by induction one can show that the processes $\{Q_i\}_{i\geq 0}$ and  $\{\bar{Q}_i\}_{i\geq 0}$ are well-defined, and using these $\{Q_t\}_{t\geq 0}$ and $\{\bar{Q}_t\}_{t\geq 0}$ are also well-defined. 
	
	\subsubsection{The regret.} Let $\mathbb{E}[\cdot]$ be expectation associated with $(\Omega,\mathcal{F}, \mathbb{P})$. Then, the regret is given by  $$G(t) := \mathbb{E}\left[ R\, \bar{N}_\mathrm{join}(t) - C\int_{0}^{t} \bar{Q}(u)du - \left( R\, N_\mathrm{join}(t) -  C\int_{0}^{t} Q(u) du\right)  \right] .$$ 
	This definition of the regret compares the net reward processes of the learning and genie-aided systems: if the learning-based admission control algorithm achieves the same long-term average profit, then this will allow us to estimate the sub-linear offset. The genie-aided dispatcher uses a static threshold policy that maximizes the long-term average profit described in \eqref{def:Long-termAverageProfit}. Note that when equality does not hold in \eqref{eq:ThresholdIneq}, the genie-aided policy is unique so there is no ambiguity in the definition of the regret. In this case, $\bar{K}_i  \equiv \bar{K}$, where $\bar{K}$ uniquely satisfies inequality \eqref{eq:ThresholdIneq}. However, when equality holds in \eqref{eq:ThresholdIneq}, the genie-aided policy is not unique.  We will compare our learning algorithm with a particular optimal genie-aided system that will be specified in Section \ref{section:MultiThresh}. 
	
	Consider a threshold policy for the learning system, $\{K_i\}_{i\geq 0}$
    , and a threshold policy for the genie-aided system,  $\{\bar{K}_i\}_{i\geq 0}$, the regret can be estimated as:
		\begin{align}
		G(t)& =  \mathbb{E}\left[ R\sum\limits_{i = 1}^{N_A(t)} \left(\mathbbm{1}_{\{\bar{Q}_i<\bar{K}_i\}}-  \mathbbm{1}_{\{Q_i<K_i\}}\right)\right]- \mathbb{E}\left[C\int_{0}^{t} \Big( \bar{Q}(u)-Q(u) \Big) du \right] \nonumber \\
		& \leq  \mathbb{E}\left[  R\sum\limits_{i = 1}^{N_A(t)} \left\rvert\mathbbm{1}_{\{\bar{Q}_i<\bar{K}_i\}} -  \mathbbm{1}_{\{Q_i<K_i\}}\right\rvert\right] + \mathbb{E}\left[C\int_{0}^{t}\Big\rvert\bar{Q}(u)-Q(u) \Big\rvert du \right].   \label{eq:RegretUpperBound}
	\end{align} 
	From \eqref{def:q(t)1} and \eqref{def:q(t)2}, we note that $$\left\rvert \bar{Q}(t) - Q(t) \right\rvert \leq \left\rvert \bar{Q}_n + \mathbbm{1}_{\{\bar{Q}_n < \bar{K}_n \}}  - \left( Q_n + \mathbbm{1}_{\{Q_n < K_n\}} \right) \right\rvert. $$This expression helps us to get an upper bound for the integral
	$\int_{0}^{t}\rvert \bar{Q}(u) - Q(u)\rvert du$ in \eqref{eq:RegretUpperBound} as follows: 
	\begin{align*}
		\int\limits_{0}^{t}\Big\rvert \bar{Q}(u) - Q(u)\Big\rvert du \leq \sum_{i = 0}^{N_A(t)} \left(T^A_{i+1}-T^A_i\right)\left( \rvert \bar{Q}_i   - Q_i \rvert +\rvert \mathbbm{1}_{\{ \bar{Q}_i < \bar{K}_i\}}- \mathbbm{1}_{\{ Q_i < K_i\}}  \rvert \right)  .
	\end{align*}
	 Substituting the above bound in  \eqref{eq:RegretUpperBound}, we get: 
	\begin{align}
		G(t) &\leq \mathbb{E}\left[  R\sum\limits_{i = 1}^{N_A(t)} \Big\rvert \mathbbm{1}_{\{\bar{Q}_i<\bar{K}_i\}} -  \mathbbm{1}_{\{Q_i<K_i\}} \Big\rvert \right] 
		+ \mathbb{E}\left[C \sum_{i = 0}^{N_A(t)} (T^A_{i+1}-T^A_i)\Big\rvert \bar{Q}_i   - Q_i \Big\rvert \right]\nonumber\displaybreak[0]\\
		&\quad+ \mathbb{E}\left[C \sum_{i = 0}^{N_A(t)} \left(T^A_{i+1}-T^A_i\right)\Big\rvert \mathbbm{1}_{\{ \bar{Q}_i < \bar{K}_i\}}- \mathbbm{1}_{\{ Q_i < K_i\}}  \Big\rvert  \right].\label{eq:RegretUpperBound2}
	\end{align}
	 Note that the (future) inter-arrival time $T^A_{i+1} - T^A_i$ is independent of the queue-length of the learning and genie-aided systems $Q_i$ and $\bar{Q}_i$, respectively, as well as the threshold used at the arrival of the $i^{th}$ customer $K_i$ and $\bar{K}_i$. In particular,  $T^A_{i+1} - T^A_i$ is independent of $\rvert \bar{Q}_i - Q_i\rvert$ and $\rvert \mathbbm{1}_{\{\bar{Q}_i < \bar{K}_i\} } -  \mathbbm{1}_{\{Q_i < K_i\} }\rvert$. Then as the increments of the Poisson process are independent, we have:
 	\begin{align*}
 	   \mathbb{E}\left[C \sum_{i = 0}^{N_A(t)} (T^A_{i+1}-T^A_i) \Big\rvert \bar{Q}_i   - Q_i \Big\rvert \right] 
 	  & = \mathbb{E}\left[ C \sum_{i=0}^\infty (T^A_{i+1}-T^A_i) \Big\rvert \bar{Q}_i   - Q_i \Big\rvert \mathbbm{1}_{\{T^A_i \leq t\}} \right] \displaybreak[0]\\
 	  & = C \sum_{i=0}^\infty \mathbb{E}\left[ (T^A_{i+1}-T^A_i) \Big\rvert \bar{Q}_i   - Q_i \Big\rvert  \mathbbm{1}_{\{T^A_i \leq t\}} \right] \quad \text{(MCT)}\displaybreak[0]\\
 	  & = C \sum_{i=0}^\infty \mathbb{E}\left[ \frac{1}{\lambda} \Big\rvert \bar{Q}_i   - Q_i \Big\rvert \mathbbm{1}_{\{T^A_i \leq t\}}  \right] \quad \text{(By independence)}\displaybreak[0]\\
 	  & = \mathbb{E}\left[ \frac{C}{\lambda} \sum_{i=0}^\infty  \Big\rvert \bar{Q}_i   - Q_i \Big\rvert \mathbbm{1}_{\{T^A_i \leq t\}}  \right]  \quad \text{(MCT)}\displaybreak[0]\\
 	  & = \frac{C}{\lambda} \mathbb{E}\left[ \sum_{i=0}^{N_A(t)}   \Big\rvert \bar{Q}_i   - Q_i \Big\rvert  \right],
 	\end{align*}
 	where MCT stands for the Monotone Convergence Theorem. Similarly, we can also simplify $ \mathbb{E}\left[C \sum_{i = 0}^{N_A(t)} (T^A_{i+1}-T^A_i)\Big\rvert \mathbbm{1}_{\{ \bar{Q}_i < \bar{K}_i\}}- \mathbbm{1}_{\{ Q_i < K_i\}}  \Big\rvert  \right]$ to get
 \begin{align}
  G(t)&\leq\mathbb{E}\left[  \bigg(R+\frac{C}{\lambda}\bigg) \sum\limits_{i = 1}^{N_A(t)} \left\rvert \mathbbm{1}_{\{\bar{Q}_i<\bar{K}_i\}} -  \mathbbm{1}_{\{Q_i<K_i\}} \right\rvert  \right]  + \mathbb{E}\left[\frac{C}{\lambda} \sum_{i = 0}^{N_A(t)}\Big\rvert \bar{Q}_i   - Q_i \Big\rvert \right] \nonumber \\
 	& \leq \bigg(R+\frac{C}{\lambda}\bigg)\mathbb{E}\left[ \sum\limits_{i = 1}^{N_A(t)} \left\rvert \mathbbm{1}_{\{\bar{Q}_i<\bar{K}_i\}} -  \mathbbm{1}_{\{Q_i<K_i\}} \right\rvert + \Big\rvert \bar{Q}_i   - Q_i \Big\rvert\right] .\label{eq:RegretUpperBound3}
 \end{align} 
	Following this bound, from now on, we analyze the systems at the arrival epochs $\{ T_i^A\}_{i\geq 1}$. 
	
	With the shift to analyzing the systems at arrival epochs, we will characterize the regret in terms of the total number of arrivals $N$.  We use $\tilde{G}(N): = G(T^A_N)$ to denote the total regret accumulated up to the arrival of the $N^{th}$ customer. 
	Recall that $m = 1/\mu$ denote the average service time and $\nu = 1/\lambda$ denote the inter-arrival time. We assume that $0<m<\infty$ and $0<\nu<\infty$: we allow for the average service time to be large, and it is possible to have $\bar{K} = 0$ where the optimal policy for the genie-aided system is to reject any arriving customer. Note that when $\bar{K} = 0$, equality in \eqref{eq:ThresholdIneq} is not possible for $R,C>0$, therefore the optimal policy is unique, and $\bar{K}_i  = \bar{K} = 0$ for all $i\geq0$. If the genie-aided dispatcher always admits customers when the queue is empty and the learning dispatcher knows this, then the algorithm design would be simpler: 
	 there is no need to balance exploration and exploitation explicitly. With this knowledge, a learning dispatcher can achieve constant regret using a policy that always accepts customers when the queue is empty and uses a threshold computed by solving the inequalities \eqref{eq:ThresholdIneq} using the empirical service rate otherwise. The conflicting requirements for a learning algorithm in the two different regimes -- $\bar{K}=0$ (stop admitting customers soon) versus $\bar{K}>0$ (admit customers infinitely often but at the correct rate via the right choice of the threshold) -- are critical to the difficulty of our problem and its analysis. 

 \begin{algorithm}[bthp!]
	\caption{Learning-based customer dispatch, with unknown service and arrival rate.}
	\label{alg:Alg3}
	\SetAlgoLined
	$i = 0$; $j = 0$; \Added{$\alpha_j $ grows at polynomial rate in $j$ }; $s = 0$; \Added{$K^*(j) = \max\{\floor{\ln(j)},0\}+l_1+Q_0.$}\\ 
	\While{$i \leq N$}{
		$j = j+1$\;
        \% \textsf{If the phase $1$ of the $j^{th}$ batch happens, it sees $l_1$ customers.}\\
		\If{$j==1\text{ or }  (K(j-1) ==0 \text{ and }  B^j ==1 )$ }{
			\For{the next $l_1$ customers}{ 
				$i = i+1$\;
				\Added{
				\% \textsf{we update the belief of the average arrival time when there is a new arrival.} \\ 
				$\hat{\nu} = \hat{\nu} + \frac{ \text{inter-arrival time observed } - \hat{\nu}}{i}$\;
			}
				Exploration phase: customers always join the queue, $K_i=l_1$. \\
				
			} 
                \If{there are $\mathrm{S_{cnt}}>0$ new services completed during this phase 1}{
					\For{$\mathrm{cnt}=1$ to $\mathrm{S_{cnt}}$}{
						$s = s +1$\;
						$\hat{m} = \hat{m} + \frac{ \text{ service time of the $\mathrm{s}^{\mathrm{th}}$ customer that completed service} - \hat{m}}{s}$\;
					}
				}
		}
  		\Added{Compute integer $K$, which satisfies $V(K,1/\hat{m}, 1/\hat{\nu}) \leq R/C<V(K+1,1/\hat{m},1/\hat{\nu})$\; 
		Set $K(j) = \min\{K^{*}(j),K\}$\;
	}
		count = 0 \;
		\% \textsf{The phase $2$ of the $j^{th}$ batch sees at least $\alpha_jl_2$ customers. The queue-length is $0$ when phase $2$ ends.}\\ 
		\While{ $ count < \alpha_j l_2 \text{ or } Q_i > 0 $ }{
			count = count +1\;
			$i = i+1$\;
			\Added{$\hat{\nu} = \hat{\nu} + \frac{ \text{inter-arrival time observed  } - \hat{\nu}}{i}$\;
			}
			Customers join the queue if and only if the queue-length is smaller than $K(j)$, and so $K_i = K(j)$.\\
			
		}
		\If{there are $\mathrm{S_{cnt}}>0$ new services completed during this phase 2}{
			\For{$\mathrm{cnt}=1$ to $\mathrm{S_{cnt}}$}{
				$s = s +1$\;
				$\hat{m} = \hat{m} + \frac{ \text{ service time of the $\mathrm{s}^{\mathrm{th}}$ customer that completed service} - \hat{m}}{s}$\;
			}
		}
		
	}
\end{algorithm}
	\subsection{ The learning algorithm}\label{subsection:LearningAlg}  
	We propose (and study) Algorithm~\ref{alg:Alg3} for learning-based social-welfare maximizing dispatch that consists of a sequence of \textit{batches}, where each batch has two phases: phase $1$ for exploration and phase $2$ for exploitation. 
	For customer $i$ who arrives during phase 1 (assuming that a phase $1$ is used), we can assume that $K_i = \infty$ as this customer is admitted in the queue no matter the queue-length at this arrival. However, in our algorithm, we will fix any exploration phase (if used) for all batches to last for exactly $l_1$ arrivals, and so, the threshold $K_i$ is effectively $K_i=l_1$ for all arrivals in any phase $1$. 
 \Added{At the beginning of phase 2 of the $j^{th}$ batch $K(j)$ is computed by finding the minimum between $K^*(j)$ and the integer that solves inequalities $V(x,1/\hat{m},1/\hat{\nu}) \leq R/C<V(x+1,1/\hat{m},1/\hat{\nu})$.} The computed $K(j)$ will be used for the entire exploitation phase of batch $j$. That is, for customers $i_1$ and $i_2$ who arrive during phase $2$ of the $j^{th}$ batch, $K_{i_1}= K_{i_2} = K(j)$ and these customers are admitted to the queue when the queue-length seen at their arrival is strictly less than $K(j)$. For technical reasons, we will insist that at the termination of phase $2$, the queue is empty. As the batch number increases, our algorithm will extend the length of the exploitation phase and reduce the occurrences of the exploration phases.

	Here is some notation that we use in the algorithm: 
	\begin{itemize}[leftmargin=*]
	    \item $l_1$: A positive integer representing the length of phase $1$, $l_1>1$;
	    \item $l_2$: A positive integer representing the initial minimum length of phase $2$, $l_2\geq l_1$;
     \Added{\item  $i$: A positive integer which is the  index of the arriving customer from the very beginning. It is used to update the belief of the average arrival rate;}
	    \item  $j$: A positive integer that indices the batch number;
        \item $\alpha_j\geq 1$: Growth factor for the length of phase $2$ in the $j^{\mathrm{th}}$ batch which ensures that the phase $2$ duration lasts for at least $\lceil\alpha_j l_2\rceil$ arrivals;
	    \item $B^j$: A Bernoulli random variable that is independent of everything else, where $\mathbb{P}\left[B^j= 1\right] = 1 $ for $j = 1$, and \Added{$\mathbb{P}\left[B^j = 1\right] = \ln^{\epsilon}(j)/j$ for $j>1$ and fixed $\epsilon >0 $.} If the threshold used in the previous batch (the $(j-1)^{\mathrm{th}}$ batch) is $0$, the random variable $B^j$ will be used to determine if phase $1$ will happen; 
	   	\item $K(j)$: the threshold used by the learning dispatcher during phase $2$ of the $j^{th}$ batch;
	   	\Added{ \item $K^*(j)$: the upper bound of the threshold used by the learning algorithm. This parameter slowly increases to infinity, and is chosen to be larger than the initial queue-length, $Q_0$, and the length of phase $1$, i.e., $l_1$;}
     \Added{\item $S_{cnt}$: A counter which counts for the number of completed services in each phase. This counter is used to update the belief of the average service rate after each phase.  }
	    
	\end{itemize}
 
	Note that Algorithm \ref{alg:Alg3} enforces an exploration phase only for the first batch, and then utilizes one in a probabilistic manner when the learned threshold in the previous batch is $0$. When the genie-aided system uses a non-zero threshold, as the number of services experienced by the customers admitted by the dispatcher increases, the threshold learned by the algorithm will quickly become non-zero for phase $2$. In this scenario, the exploration phase can potentially be eschewed, and, in fact, should be used more and more infrequently as time progresses so that the regret is not large. In fact, in our algorithm we completely eliminate a phase $1$ for a batch if in the previous batch the threshold of its phase $2$ is positive: some customers will be admitted in a phase $2$ with a positive threshold so new service time estimates will obtain, and on the contrary, a phase $2$ with a $0$ threshold will not admit any customers. However, allowing for an exploration phase is necessary. When the genie-aided system uses a non-zero threshold, it is possible that the learning system sees the first few service times being long enough so that the learned threshold is $0$. Then, without the exploration phase, the learning system will stop admitting any customers to the queue, and therefore, will not get any more samples to update its false belief. Although this is a low-probability event, the probability of this happening is non-negligible for any fixed length $l_1$ of the exploration. 
	
	The frequency of the exploration phase in our algorithm is controlled by the distribution of $B^j$. Our theoretical regret analysis uses $\mathbb{P}\left[B^j = 1\right] = \ln(j)/j$. 
	When the genie-aided system uses the threshold $0$, the exploration phase should not happen too often. This is because every time the learning system admits a customer into the queue, the regret increases. Hence, this regime demands that phase $1$ be eschewed as fast as possible. However, as the algorithm is unaware of the parameter regime (even whether the optimal threshold is zero or non-zero), we necessarily need enough phase $1$s when the threshold from the previous batch is $0$. Hence, to combat the regret accumulation from phase $1$s when the optimal policy is not to admit any arrivals, we increase the length of phase $2$ (the exploitation phase) as the batch count increases. \Added{The control of the length of phase $2$ of the $j^{\mathrm{th}}$ batch is achieved using parameter $\alpha_j$: phase $2$ of the $j^{\mathrm{th}}$ batch will last for at least $\lceil\alpha_j l_2\rceil$ arrivals. Whereas we do require that $\alpha_j$ grows to infinity, we do not want it to grow too fast as this could lead to poor performance: when the thresholds used by the learning and genie-aided systems do not match in a batch, there may be too much regret accumulated during that batch if there is a large value of $\alpha_j$ for small $j$ (when the probability of an error is higher).}
	
	\Added{Note  that $K^*(j) = \max\{\floor{\ln(j)},0\}+l_1+Q_0$ is a deterministic function, with $K^*(j)$ no smaller than $l_1$ and the initial queue-length of the learning system $Q_0$ (when $Q_0$ is chosen in a deterministic manner). 
    We also note that $\lim_{j\rightarrow \infty} K^*(j) = \infty$. This ensures that as the number of batches increases, eventually, the (true) optimal thresholds will be smaller than this upper bound. Note that for all $j\geq \ceil{e^{\bar{K}}}$ batches, $K^*(j)\geq\bar{K}$. Therefore, if the estimations on the service and arrival rates are accurate during batch $j$ for $j\geq \ceil{e^{\bar{K}}}$, then the learning dispatcher will be using $\bar{K}$ during phase $2$. Although $\ceil{e^{\bar{K}}}$ can be a large number, it is a fixed constant (fixing $\mu$ and $\lambda$), and the total expected regret accumulated during the first $\floor{e^{\bar{K}}}$ batches will also be a constant (see Remark \ref{rem:RegretBeforeReachBound}). Therefore, in our analysis we focus our analysis on the regret accumulated when $j\geq\ceil{e^{\bar{K}}}$.}

	\subsection{Main results: Regret bounds for Algorithm \ref{alg:Alg3}}\label{subsection:MainResults}

	\begin{thm}\label{thm:Thm1}
		Assume that the initial queue-length for the learning and genie-aided systems are the same, and $0$ is not in the set of optimal thresholds used by the genie-aided system. 
		Then, Algorithm \ref{alg:Alg3} achieves $O(1)$ regret as $N \rightarrow \infty$, where $N$ is the total number of arrivals.
	\end{thm}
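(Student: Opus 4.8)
The plan is to start from inequality \eqref{eq:RegretUpperBound3} and show its right-hand side is bounded uniformly in $N$. Since $0$ is not an optimal threshold for the genie-aided system, its optimal policy is the \emph{unique} static threshold $\bar K>0$ (equality in \eqref{eq:ThresholdIneq} is then impossible), so $\bar K_i\equiv\bar K$, and because $\tilde G(N)=G(T^A_N)\le(R+C/\lambda)\cdot(\text{partial sum})$ it suffices to show that $\mathbb{E}\big[\sum_{i\ge1}(|\mathbbm{1}_{\{\bar Q_i<\bar K\}}-\mathbbm{1}_{\{Q_i<K_i\}}|+|\bar Q_i-Q_i|)\big]<\infty$. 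I would reorganize the sum by the batch index $j$: within batch $j$ one has $Q_i\le K^*(j)=O(\ln j)$ and $\bar Q_i\le\bar K$, so each summand is $O(\ln j)$, while the number of arrivals in batch $j$ is $l_1\,\mathbbm{1}[\text{a phase }1\text{ occurs}]$ plus the length of phase $2$, namely $\lceil\alpha_j l_2\rceil$ plus a queue-draining overshoot with a geometric tail. The first $\lceil e^{\bar K}\rceil$ batches contribute an $O(1)$ constant (Remark~\ref{rem:RegretBeforeReachBound}), so I restrict attention to $j\ge\lceil e^{\bar K}\rceil$, for which $K^*(j)\ge\bar K$.

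The second ingredient is the coalescence structure of the coupling of Section~\ref{coupling}. For each fixed threshold $K$ and $d\in\mathbb{Z}_+$, the one-step map $x\mapsto(x+\mathbbm{1}_{\{x<K\}}-d)^{+}$ is non-decreasing and $1$-Lipschitz on $\mathbb{Z}_+$, so over any run of arrivals with $K_i=\bar K_i=\bar K$ the gap $|Q_i-\bar Q_i|$ is non-increasing and drops to $0$ — remaining $0$ until the thresholds next disagree — at the first arrival $i$ of the run with $P(T^A_i)-P(T^A_{i-1})\ge\max(Q_{i-1},\bar Q_{i-1})+1$. Since $Q_0=\bar Q_0$, the two systems start coalesced, so regret is produced only in \emph{bad batches} $j$ (those with $K(j)\ne\bar K$) and in the \emph{recoupling window} inside the first good batch following a bad batch, which lasts until the above coalescence event. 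As both queues are $\le K^*(j)=O(\ln j)$ during batch $j$ and the number of potential departures in an inter-arrival gap is geometric, coalescence occurs in a given gap with probability at least $j^{-c'}$ for a constant $c'>0$, and phase $2$ has $\ge\lceil\alpha_j l_2\rceil$ arrivals, so the recoupling window has expected length $O(j^{c'})$ with a polynomial tail. Consequently, once we show that $p_j:=\mathbb{P}[K(j)\ne\bar K]$ decays faster than any polynomial in $j$, the per-batch regret bound $O(\alpha_j\ln j)$ times the (doubly) summed bad-batch probabilities is finite, and we are done.

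The crux is the super-polynomial decay of $p_j$. For $j\ge\lceil e^{\bar K}\rceil$, $K(j)=\min\{K^*(j),K\}\ne\bar K$ forces the integer $K$ solved from $V(K,1/\hat m,1/\hat\nu)\le R/C<V(K+1,1/\hat m,1/\hat\nu)$ to differ from $\bar K$, which by \eqref{eq:RegionOfParam}--\eqref{eq:CorrectThreshold} forces the estimate pair $(\hat m,\hat\nu)$ used at the start of phase $2$ of batch $j$ to leave the fixed box $(m-\delta_1,m+\delta_1)\times(\nu-\delta_2,\nu+\delta_2)$. The estimate $\hat\nu$ is the running mean of at least $\sum_{k<j}\alpha_k l_2$ i.i.d.\ EXP$(\lambda)$ inter-arrival times, so $\mathbb{P}[|\hat\nu-\nu|\ge\delta_2]$ is super-polynomially small by a Chernoff bound. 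For the service estimate, conditionally on having recorded $s$ service durations — which are i.i.d.\ EXP$(\mu)$ and independent of the admission history, by the memorylessness built into the potential-departure process $P$ — we have $\mathbb{P}[|\hat m-m|\ge\delta_1]\le 2e^{-cs}$ for a constant $c>0$, so it remains to show the number $s(j)$ of recorded services by the start of batch $j$ is $\Omega(\ln^{1+\epsilon}j)$ with overwhelming probability. If some batch $k<j$ had $K(k)=\bar K>0$, its long phase $2$ admits a customer whenever the queue is below $\bar K$ and an elementary counting argument yields $\Omega(\alpha_k l_2)$ service completions in it, so $s(j)=\Omega(\alpha_k l_2)$; otherwise $K(k)=0$ for all $k<j$, in which case a phase $1$ is scheduled independently in batch $k\le j$ with probability $\ln^{\epsilon}(k)/k$, each such phase $1$ recording a fresh service with probability at least $\mu/(\mu+\lambda)>0$ (the $l_1\ge2$ forced arrivals keep the server busy across at least one inter-arrival gap), so a Chernoff bound gives $s(j)=\Omega\big(\sum_{k\le j}\ln^{\epsilon}(k)/k\big)=\Omega(\ln^{1+\epsilon}j)$ with super-polynomially small failure probability. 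Either way $\mathbb{E}[e^{-cs(j)}]$, and hence $p_j$, decays faster than any polynomial, which combined with the previous two paragraphs yields finiteness of the infinite sum and hence $\sup_N\tilde G(N)<\infty$.

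The step I expect to be the main obstacle is this last one: the sample count $s(j)$ is entangled with the thresholds the algorithm selects, which themselves depend on $\hat m$, a genuine self-reference. Making it rigorous calls for a renewal-type decomposition over batches keyed on the last batch with an incorrect threshold — each good batch injecting $\Omega(\alpha_j l_2)$ fresh service samples so that an exit from the good box becomes ever rarer, while the rare excursions into the bad region are self-correcting because phase $1$s keep replenishing samples — together with the care needed to verify that conditioning on the \emph{number} of recorded service durations does not distort their EXP$(\mu)$ law, which is precisely what the single-potential-departure-process coupling of Section~\ref{coupling} guarantees. Standard $M/M/1/\bar K$ estimates for $\mathbb{E}[\bar Q_i]$ and for queue-draining times enter only at the level of routine tail bounds.
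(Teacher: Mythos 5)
Your overall strategy mirrors the paper's proof closely: decompose the regret batch-by-batch, bound the per-batch contribution using the queue-length cap $K^*(j)=O(\ln j)$ and a busy-period/drain bound that is polynomial in $j$, and show that the probability of a bad batch decays super-polynomially by arguing that the number of observed service completions is $\Omega(\ln^{1+\epsilon}j)$ (your case analysis for $s(j)$ is a less compact version of the paper's lower bound $\hat X^j\ge l_1\sum_{i\le j}B^i$, but it captures the same point). Your "recoupling window" argument plays the role of the paper's event $\mathcal E^j_4$ combined with Proposition \ref{prop:OrderedSystems} and the busy-period bound $g(\cdot;\cdot)$ of Proposition \ref{prop:BusyPeriodArrival}; you get a polynomial rather than exponential tail for the recoupling time, but that is harmless because the bad-batch probability is super-polynomially small.

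There is, however, a genuine gap at the very start. You write that since $0$ is not an optimal threshold, "its optimal policy is the \emph{unique} static threshold $\bar K>0$ (equality in \eqref{eq:ThresholdIneq} is then impossible), so $\bar K_i\equiv\bar K$." This is false. Equality $V(\bar K,\mu,\lambda)=R/C$ is entirely compatible with the hypothesis of Theorem \ref{thm:Thm1}: it only requires $\bar K-1\ge 1$, i.e.\ $\bar K\ge 2$, in which case both $\bar K$ and $\bar K-1$ are optimal and $0$ is not in the optimal set. Your argument then breaks at several places: (i) the definition of regret is ambiguous, since there is no single static optimal policy to compare against; (ii) the quantity $p_j=\mathbb P[K(j)\ne\bar K]$ does not tend to $0$ in the non-unique case, because $K(j)$ will oscillate between $\bar K$ and $\bar K-1$ forever even when $(\hat m,\hat\nu)$ is arbitrarily close to $(m,\nu)$ — only the weaker event $K(j)\notin\{\bar K,\bar K-1\}$ becomes rare (this is Corollary \ref{cor:ICSamePolicy}); and (iii) even comparing against a fixed static threshold in $\{\bar K,\bar K-1\}$ would not yield $O(1)$ regret, for exactly the reason described at the start of Section \ref{subsection:IntCaseAnotherGenie}. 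Handling this case requires the alternating genie-aided dispatcher of \eqref{eq:SwitchRule}, the optimality result of Proposition \ref{prop:AnotherGenieOpt} (a martingale SLLN argument), and the ordering property of Proposition \ref{prop:OrderedSystems} to keep the alternating genie's queue coupled below the learner's. None of this machinery appears in your sketch, so the theorem as stated (which covers the non-unique case) is not proven by your argument.
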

\Added{
	\begin{thm}\label{thm:Thm2}
		Assume that the initial queue-length for the learning and genie-aided systems are the same, and $0$ is in the set of optimal thresholds used by the genie-aided system. 
		Then, Algorithm \ref{alg:Alg3} achieves $O(\ln^{1+\epsilon}(N))$ regret for any specified $\epsilon>0$ as $N \rightarrow \infty$, where $N$ is the total number of arriving customers. 
		
	\end{thm}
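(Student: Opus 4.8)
The plan is to bound the right-hand side of \eqref{eq:RegretUpperBound3}, i.e.\ $\tilde{G}(N)\le(R+\tfrac{C}{\lambda})\,\mathbb{E}\big[\sum_{i=1}^{N}\big(\big|\mathbbm{1}_{\{\bar{Q}_i<\bar{K}_i\}}-\mathbbm{1}_{\{Q_i<K_i\}}\big|+\big|\bar{Q}_i-Q_i\big|\big)\big]$, by cutting the arrivals $1,\dots,N$ along the batches of Algorithm~\ref{alg:Alg3} and controlling each batch. Since $0$ is an optimal threshold we have $\bar{K}\in\{0,1\}$, so $K^{*}(j)\ge l_1>1\ge\bar{K}$ for every $j$, and by Remark~\ref{rem:RegretBeforeReachBound} the first few batches cost only $O(1)$. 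I would first carry out the argument in the transparent case $\bar{K}=0$, where the genie never admits and $\bar{Q}(t)$ drains to $0$, and then reduce the case ``optimal set $=\{0,1\}$'' to it by comparing the learning algorithm against the particular randomized genie of Section~\ref{section:MultiThresh}, which imitates the phasing of Algorithm~\ref{alg:Alg3} by using the learning threshold $K(j)$ during each phase~2 (and a bounded threshold during each phase~1, also emptying the queue at each phase~2 end): this is an optimal policy because $0$ and $1$ are both optimal thresholds and the phase~1 windows are asymptotically negligible, and it is coupled so that on the good event below the learning and genie thresholds coincide throughout every phase~2, leaving mismatches only in the phase~1s and on the bad event.

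Within a batch there are two sources of regret. (i) \emph{Forced exploration.} A phase~1 occurs in batch $j$ only if $B^{j}=1$; it lasts $l_1$ arrivals during which the learning system admits everyone while the genie admits $O(1)$, and the imbalance of at most $l_1$ it creates is flushed in the ensuing phase~2 within $O(l_1 m/\nu)$ further arrivals, so a phase~1 together with its flush adds at most a fixed constant to the sum in \eqref{eq:RegretUpperBound3}. Since every batch has at least $l_2\ge1$ arrivals the batch index never exceeds $N$, so the expected number of forced explorations up to $T^A_N$ is at most $1+\sum_{j=2}^{N}\mathbb{P}[B^{j}=1]=1+\sum_{j=2}^{N}\ln^{\epsilon}(j)/j=O(\ln^{1+\epsilon}(N))$, and this source contributes $O(\ln^{1+\epsilon}(N))$. (ii) \emph{Estimation error in phase~2.} Let $A_j$ be the event that $\hat{m},\hat{\nu}$ in force at the start of phase~2 of batch~$j$ lie in the appropriate window \eqref{eq:RegionOfParam} (resp.\ \eqref{eq:ICRegionOfParam}); then on $A_j$ the integer solved from \eqref{eq:ThresholdIneq} is $\bar{K}$, so the learning system uses $K(j)=\bar{K}$ in phase~2 and matches the genie, contributing nothing beyond the flush already counted. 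On $A_j^{c}$ we still have $K(j)\le K^{*}(j)=O(\ln j)$, the phase~2 lasts in conditional expectation at most $\alpha_j l_2$ arrivals plus a return-to-$0$ time polynomial in $j$, and $|\bar{Q}_i-Q_i|\le K^{*}(j)$ throughout, so the expected phase-$2$ regret of batch $j$ restricted to $A_j^{c}$ is at most a polynomial in $j$ times $\mathbb{P}[A_j^{c}]$.

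Assembling these bounds gives $\tilde{G}(N)=O(1)+O(\ln^{1+\epsilon}(N))+\sum_{j}(\text{polynomial in }j)\,\mathbb{P}[A_j^{c}]$, so the theorem reduces to showing $\mathbb{P}[A_j^{c}]=o(j^{-r})$ for every $r>0$, which makes the last sum $O(1)$. The inter-arrival half is routine: the samples are i.i.d.\ $\mathrm{EXP}(\lambda)$ and, since $\alpha_{j'}\ge1$, number at least $l_2(j-1)$ by batch $j$, so a Cram\'er bound summed over $n\ge l_2(j-1)$ — which also disposes of the fact that the number of samples is itself random — yields a failure probability exponentially small in $j$.

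The step I expect to be hardest is the service estimate, because $\hat{m}$ uses only customers who have \emph{completed} service, and the number $s_j$ of these by batch~$j$ grows only at rate $\ln^{1+\epsilon}(j)$. I would prove a lemma that $\mathbb{P}[s_j<c_3\ln^{1+\epsilon}(j)]=o(j^{-r})$ for all $r$ by a dichotomy on the predictable weight $W^{0}_j:=\sum_{j'=2}^{j-1}\mathbbm{1}_{\{K(j'-1)=0\}}\ln^{\epsilon}(j')/j'$ relative to $W_j:=\sum_{j'=2}^{j-1}\ln^{\epsilon}(j')/j'=\Theta(\ln^{1+\epsilon}(j))$. If $W^{0}_j\ge\tfrac12 W_j$, a stopping-time Chernoff bound for the martingale $\sum_{j'}\mathbbm{1}_{\{K(j'-1)=0\}}(B^{j'}-\ln^{\epsilon}(j')/j')$ (each $B^{j'}$ independent of the past) forces $\Omega(\ln^{1+\epsilon}(j))$ forced explorations, hence that many service samples, outside a set of probability $\exp(-\Omega(\ln^{1+\epsilon}(j)))$, which beats every polynomial in $1/j$. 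If instead $W^{0}_j<\tfrac12 W_j$, the remaining weight is carried by batches with $K(j'-1)>0$, whose phase~2s ran positive thresholds over a total of at least polynomially in $j$ many arrivals, so a concentration estimate for the number of service completions over a long window with busy fraction bounded away from $0$ again furnishes $\Omega(\ln^{1+\epsilon}(j))$ samples with overwhelming probability. The subtle point — and the reason Algorithm~\ref{alg:Alg3} updates $\hat{m}$ \emph{after} each phase~2 — is exactly the absence of a ``trap'': a single unlucky large estimate makes $K(j)>0$ and switches off forced exploration, but the positive-threshold phase~2 that then runs still produces service samples and self-corrects $\hat{m}$. Granting the lemma, I condition on $\{s_j\ge c_3\ln^{1+\epsilon}(j)\}$ and apply the same Cram\'er bound to the i.i.d.\ $\mathrm{EXP}(\mu)$ service times uniformly over $n\ge c_3\ln^{1+\epsilon}(j)$ to get $\mathbb{P}[\,|\hat{m}-m|>\tilde{\delta}_1\mid s_j\ge c_3\ln^{1+\epsilon}(j)\,]=o(j^{-r})$ for every $r$; combining the two tails bounds $\mathbb{P}[A_j^{c}]$ super-polynomially and yields $\tilde{G}(N)=O(\ln^{1+\epsilon}(N))$.
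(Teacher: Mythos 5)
Your proposal follows the same high-level route as the paper: decompose the regret over batches via \eqref{eq:RegretUpperBound3} and \eqref{eq:RegretInJ}, bound the phase-1 (forced exploration) contribution and the phase-2 (wrong-threshold) contribution separately, observe that the dominant term is the forced-exploration cost $\sum_j (R + C/\lambda)(\cdots)\,\mathbb{P}[\mathcal{E}^j_1]$, and show that the phase-2 error terms sum to $O(1)$ because the probability of a bad estimate decays faster than any polynomial while the per-batch cost grows at most polynomially. That matches the paper's Lemma~\ref{lem:Phase1Reg}, Lemma~\ref{lem:Phase2Reg}, and Proposition~\ref{prop:BadEventProb}, and the final step $\sum_{j\le N/l_2}\ln^{\epsilon}(j)/j = O(\ln^{1+\epsilon}(N))$ is identical to the paper's proof in Section~\ref{subsection:pfthm2}.

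The one place you take a genuinely more convoluted route is the service-time sample count. You build a dichotomy on the predictable weight $W^0_j$ of zero-threshold batches, applying a martingale concentration bound in one branch and a busy-fraction argument in the other. The paper (Proposition~\ref{prop:SampleEst}) avoids the case split entirely with a cleaner pathwise lower bound: in each batch $i$, either $K(i) > 0$ and phase~2 contributes at least $\alpha_i l_2 \ge l_1$ arrivals at a positive threshold (hence observable departures), or $K(i)=0$ and phase~1 of batch $i+1$ (if $B^{i+1}=1$) contributes exactly $l_1$; in both cases the contribution dominates $l_1 B^{i+1}$, giving $\hat{X}^j \ge l_1\sum_{i=1}^{j}B^i$ almost surely. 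A single multiplicative Chernoff bound on the independent Bernoullis $B^i$ then yields the $\exp(-\Omega(\ln^{1+\epsilon}(j)))$ tail directly, with no need to separately reason about positive-threshold windows. Your branch~2 in particular is the fragile part: you assert that positive-threshold phase~2s collectively yield "polynomially in $j$ many arrivals" and a "busy fraction bounded away from $0$", but for $\bar K=0$ the system is an $M/M/1$ with $\lambda/\mu$ possibly at or above $1$, and the precise concentration statement you would need for departure counts is less routine than a plain Chernoff on $\sum B^i$. The paper's inequality sidesteps all of this.

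One more inaccuracy: your description of the alternating genie in Section~\ref{section:MultiThresh} is not quite right. That genie does not "use the learning threshold $K(j)$ during each phase~2"; it always uses a threshold in $\{\bar K, \bar K-1\}$, chosen at the start of each busy cycle according to the rule \eqref{eq:SwitchRule} (select the one closer to the learning threshold when the busy cycle starts in a phase~2, and $\bar K - 1$ when it starts in a phase~1). Since this genie is an admissible threshold policy in $\{\bar K,\bar K-1\}$, it is optimal by Proposition~\ref{prop:AnotherGenieOpt}, and the specific switching rule is what makes the coupling (Proposition~\ref{prop:OrderedSystems}) re-align the two queue-lengths once the estimates are accurate and the queues jointly empty. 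With that correction, your reduction of the $\{0,1\}$ case is in the right spirit.
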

}

	When the learning and genie-aided systems have different initial queue-lengths, as stated in Remark \ref{rem:DiffQL} below, the regret characterization still holds. This is done by introducing another genie-aided system that has the same initial queue-length as the learning system. Thereafter, we will use Proposition \ref{prop:OrderedSystems} (discussed in the following section), which shows that if two coupled systems use the same threshold policy, then the ordering of their queue-lengths is preserved. \Added{We end this section by pointing out that the regret characterization in Theorem~\ref{thm:Thm2} can be changed to $O(\log^{1+\epsilon}(N))$ for all $\epsilon>0$ as $N\rightarrow \infty$; see the discussion in Remark~\ref{rem:order}.}
	
	\section{Preliminary results}\label{section:Preliminary}
	
	We will use a few coupled systems to prove the main results. Besides the coupling between the learning  and the genie-aided systems mentioned before, we will also compare the queue-length process of the learning system with systems using the same threshold policy but with different initial queue-lengths. The following results are proved for systems coupled by having the same arrival process and with the service time of the customers in the queue of both systems begin determined by the same Poisson process from $t = 0$.

	The next proposition states that the order of the queue-lengths of two coupled systems is preserved over time if their threshold policies satisfy certain conditions. This is a core preliminary result that is used in different ways, and helps us establish our main results in considerable generality. Consider two systems $G$ and $L$ coupled through process $\{ N_A(t)\}_{t \geq 0}$ and $\{P(t)\}_{t\geq 0 }$ as described in Section~\ref{coupling}, but with possibly different initial queue-lengths and (threshold) admission policies. 
 Let $Q^G(t)$ and $Q^L(t)$ denote the queue-length at time $t$ of the two systems, respectively. Let $\{K^G_i\}_{i\geq0}$ and $\{K^L_i\}_{i\geq0}$ denote the threshold policies of the two systems, respectively.

	\begin{prop}\label{prop:OrderedSystems}$\quad$
		\begin{itemize}[leftmargin=*]
			\item[1.]  If the dispatchers for the two coupled systems $G$ and $L$ use the same threshold admission policy for all arrivals, i.e., $K^G_i = K^L_i$ for all $i$, then with probability $1$, the order of their queue-lengths is preserved for all time, that is, 
			\begin{align}
				Q^G(0) \geq Q^L(0)\ \Longrightarrow\ Q^G(t) \geq Q^L(t), \quad \quad \quad \forall t \geq 0. \label{ineq1}
			\end{align}
			\item[2.] Assume that both systems have the same initial queue-length $q:=Q^G(0) = Q^L(0)$. Let $D^G(t)$ and $D^L(t)$ denote the number of departures up to time $t$ for the systems $G$ and $L$, respectively. If 
            $K^G_i \geq K^L_i$ for all $i$, then with probability $1$, 
            \begin{align}
            Q^G(t) \geq Q^L(t) \text{ and }D^G(t) \geq D^L(t), \quad \quad \quad \forall t \geq 0.
            \end{align}
            Moreover, every customer that joins the queue in the system $L$ necessarily joins the queue in the system $G$ when static thresholds  $K^G \geq K^L$ are used in the two systems, respectively, and $q\leq K^L$.
		\end{itemize}
	\end{prop}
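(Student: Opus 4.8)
The plan is to reduce both parts to a single monotonicity property of the one‑step queue‑length map and then run pathwise inductions over arrival epochs, extending to all $t\ge 0$ through the formulas \eqref{def:q(t)1}--\eqref{def:q(t)2}. For $q\in\mathbb{Z}_+$, $k\in\mathbb{N}$ and $d\in\mathbb{Z}_+$ I would introduce the one‑step map $g(q,k,d):=\big(q+\mathbbm{1}_{\{q<k\}}-d\big)^{+}$, so that the coupled recursions read $Q^G_i=g(Q^G_{i-1},K^G_{i-1},P(T^A_i)-P(T^A_{i-1}))$ and likewise for $L$, while the between‑arrival values are $Q(t)=g(Q_n,K_n,P(t)-P(T^A_n))$ with $n=\max\{m:T^A_m<t\}$ --- the \emph{same} index $n$ for both systems because the arrival processes coincide. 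The key lemma is that the ``admit‑one'' map $h(q,k):=q+\mathbbm{1}_{\{q<k\}}$ is nondecreasing in $(q,k)$ jointly: raising $q$ by an integer can never decrease $h$, and raising $k$ can only flip the indicator from $0$ to $1$; this is a short case check. Since $x\mapsto(x-d)^{+}$ is nondecreasing, $g(\cdot,\cdot,d)$ is then jointly nondecreasing in $(q,k)$ for each fixed $d$, and this monotonicity is the workhorse of the whole argument.

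Given the lemma, Part~1 follows by induction on $i$ along the common driving pair $(N_A,P)$: because $K^G_i\equiv K^L_i$ only the $q$‑monotonicity of $g$ is needed, so $Q^G_0\ge Q^L_0$ propagates to $Q^G_i\ge Q^L_i$ for all $i$, and feeding the common increment $d=P(t)-P(T^A_n)$ into $g$ extends the inequality to every $t\ge 0$. For Part~2 the ordering statement is the same induction, now using the joint $(q,k)$‑monotonicity, started from the common value $q$ and the hypothesis $K^G_i\ge K^L_i$; the between‑arrival formula again upgrades it to $Q^G(t)\ge Q^L(t)$ for all $t$. For the departure counts I would argue via wasted potential departures: at each potential‑departure epoch $T^{PD}_k$ either a real departure occurs (queue positive just before) or the epoch is wasted (queue empty just before), so $D^G(t)+W^G(t)=P(t)$ and $D^L(t)+W^L(t)=P(t)$, where $W^G,W^L$ count wasted epochs up to $t$. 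Since $Q^G(s)\ge Q^L(s)$ for all $s$ forces $\{s:Q^G(s)=0\}\subseteq\{s:Q^L(s)=0\}$ (passing to left limits is harmless as the paths are piecewise constant), every epoch wasted in $G$ is wasted in $L$, i.e. $W^G(t)\le W^L(t)$, hence $D^G(t)\ge D^L(t)$.

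For the ``moreover'' (static thresholds $K^G\ge K^L$ and $q\le K^L$) I would first record that a static threshold policy started at or below its threshold keeps the queue there, so $Q^L_i\le K^L$ and $Q^G_i\le K^G$ for all $i$. The crucial extra invariant, proved by induction on $i$, is
\[
Q^G_i-Q^L_i\ \le\ K^G-K^L .
\]
It holds at $i=0$; for the step write $a:=h(Q^G_{i-1},K^G)$ and $b:=h(Q^L_{i-1},K^L)$, so $Q^G_i=(a-d)^{+}$, $Q^L_i=(b-d)^{+}$ with a common $d$. By the ordering from Part~2 and the lemma, $a\ge b$, whence $(a-d)^{+}-(b-d)^{+}\le a-b$, so it suffices to check $a-b\le K^G-K^L$. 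A short case split on whether each of $Q^G_{i-1},Q^L_{i-1}$ is strictly below or exactly at its threshold settles this: if both are strictly below then $a-b=Q^G_{i-1}-Q^L_{i-1}$ and the induction hypothesis applies; if $Q^L_{i-1}=K^L$ then $b=K^L$ and $a\le K^G$ gives the bound; and the only troublesome combination, $Q^G_{i-1}=K^G$ together with $Q^L_{i-1}<K^L$, is vacuous, since the induction hypothesis would then force $Q^L_{i-1}\ge K^L$. Finally, if customer $i$ joins the queue in $L$ then $Q^L_i\le K^L-1$, so the invariant yields $Q^G_i\le K^G-1<K^G$ and customer $i$ joins in $G$. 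All assertions hold on the almost‑sure event that the two Poisson processes have no simultaneous jumps, which is where ``probability $1$'' enters. I expect the case analysis for the $K^G-K^L$ invariant --- in particular recognizing that the troublesome combination cannot arise --- to be the only genuinely delicate point; everything else is bookkeeping around the monotone map $g$.
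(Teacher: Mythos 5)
Your proof is correct and rests on the same underlying ideas as the paper's — a pathwise coupling, induction along the sample path, and (for the final assertion) the invariant $Q^G_i-Q^L_i\le K^G-K^L$ — but the packaging is genuinely cleaner in a few places. The paper runs its inductions over the full sequence of potential jump times $\{T^A_i\}\cup\{T^{PD}_i\}$ and does the case analysis directly on the queue dynamics; you instead induct only over arrival epochs, folding the between-arrival service completions into a single step via the map $g(q,k,d)=(q+\mathbbm{1}_{\{q<k\}}-d)^+$, and you isolate the monotonicity of $h(q,k)=q+\mathbbm{1}_{\{q<k\}}$ as a stand-alone lemma that drives both parts. That abstraction eliminates most of the paper's case bookkeeping at arrival and service epochs. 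Your treatment of departures via the identity $D+W=P$ and the inclusion $\{Q^G=0\}\subseteq\{Q^L=0\}$ is a nice formalization of what the paper states in one sentence (``whenever there is a service completion in $L$ there is one also in $G$''). For the invariant step, your use of the contraction $(a-d)^+-(b-d)^+\le a-b$ for $a\ge b$, combined with recognizing that the only problematic case ($Q^G_{i-1}=K^G$, $Q^L_{i-1}<K^L$) is ruled out by the induction hypothesis, reproduces exactly the content of the paper's case split, just organized around the one-step map. One small caveat worth stating explicitly: the positive-part recursion $Q_i=(h(Q_{i-1},K_{i-1})-d_i)^+$ faithfully collapses the in-between dynamics only because the queue evolves monotonically downward between arrivals (services only decrease it), which is implicit in equations~\eqref{def:q(t)1}--\eqref{def:q(t)2}; you rely on this without saying so, but it is what makes the arrival-epoch induction sufficient.
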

	Before proving the proposition, we state a useful corollary. 
	\begin{cor} \label{cor:OrderedSystems}
		 Assume that phase $1$ of the $j^{th}$ batch did not happen and the queue-length processes of the learning and genie-aided systems are coupled. If the two systems use the same threshold during the phase $2$ of the $j^{th}$ batch and if the queue-length of the genie-aided system hits $0$ during this phase $2$, then the queue-lengths of both systems are $0$ at the end of this phase $2$.
	\end{cor}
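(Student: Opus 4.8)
The plan is to obtain the corollary from two applications of part~1 of Proposition~\ref{prop:OrderedSystems}: one over all of phase~2 of the $j$th batch, and one over the sub-interval that begins the first time the genie-aided queue empties. The one place the hypothesis ``phase~1 of the $j$th batch did not happen'' enters is to pin down the learning queue at the start of that phase~2. Indeed, phase~1 is executed whenever $j=1$ (since $\mathbb{P}[B^1=1]=1$), so phase~1 not occurring forces $j\ge 2$; by the exit condition of the exploitation loop in Algorithm~\ref{alg:Alg3} (reflected in the comment that the queue-length is $0$ when phase~2 ends) the learning queue is empty at the end of phase~2 of batch~$j-1$, and since no phase~1 intervenes it is still empty when phase~2 of batch~$j$ begins. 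Denote by $t_0$ and $t_1$ the start and end of phase~2 of batch~$j$; then $\bar Q(t_0)\ge 0 = Q(t_0)$, and on $[t_0,t_1]$ both dispatchers use the common threshold of that phase, namely $K(j)$ (this is where the ``same threshold during phase~2'' hypothesis is used).

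First I would restart the coupling of Section~\ref{coupling} at $t_0$ and apply part~1 of Proposition~\ref{prop:OrderedSystems} with the genie-aided system in the role of $G$ and the learning system in the role of $L$, concluding $\bar Q(t)\ge Q(t)$ for all $t\in[t_0,t_1]$. Let $\tau\in[t_0,t_1]$ be the first time the genie-aided queue hits $0$, which exists by hypothesis. Then $0=\bar Q(\tau)\ge Q(\tau)\ge 0$, so $Q(\tau)=\bar Q(\tau)=0$. Second, on $[\tau,t_1]$ the two systems start (at time $\tau$) from the same, empty, queue and still share the threshold $K(j)$; applying part~1 of Proposition~\ref{prop:OrderedSystems} once with each system in the role of $G$ yields both $\bar Q(t)\ge Q(t)$ and $Q(t)\ge\bar Q(t)$ on $[\tau,t_1]$, i.e.\ $\bar Q(t)=Q(t)$ there. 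Evaluating at $t=t_1$ and using that Algorithm~\ref{alg:Alg3} terminates phase~2 only with an empty learning queue gives $\bar Q(t_1)=Q(t_1)=0$, which is the assertion.

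The only step needing genuine care is the restart of the coupling at the random times $t_0$ and $\tau$: Proposition~\ref{prop:OrderedSystems} is stated for systems coupled from time $0$, so one must verify that, given the history up to such a time, the arrival process and the potential-departure process shifted to that time again form a coupling of exactly the type constructed in Section~\ref{coupling}. Since $t_0$ and $\tau$ are stopping times for the natural filtration, this follows from the strong Markov property of the two driving Poisson processes together with the memorylessness already invoked in Section~\ref{coupling} to show the residual service time is EXP($\mu$); the proposition then applies verbatim to the shifted systems. I do not expect any obstacle beyond this bookkeeping --- the whole argument rides on the monotonicity in part~1 of Proposition~\ref{prop:OrderedSystems}.
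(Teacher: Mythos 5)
Your proposal is correct and is a fleshed-out version of the paper's one-line proof, which simply cites Proposition~\ref{prop:OrderedSystems} together with the algorithm's exit condition. You correctly observe that ``no phase~1'' plus the exit condition of phase~2 of batch $j-1$ forces $Q=0$ at the start of phase~2 of batch $j$, so $\bar Q \ge Q$ throughout by part~1 of Proposition~\ref{prop:OrderedSystems}; when $\bar Q$ first hits $0$ so does $Q$, from which point the two queue lengths coincide; and since the algorithm terminates phase~2 only with $Q=0$, the genie-aided queue is $0$ there as well. One remark: the ``restart of the coupling at a random time'' that you flag as the delicate step does not actually require invoking the strong Markov property of the driving Poisson processes. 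The proof of Proposition~\ref{prop:OrderedSystems} is a pathwise induction over the combined jump times $\{t_l\}$, so the ordering (and hence coincidence) conclusion follows from any jump time $t_{l_0}$ onward as soon as the ordering holds at $t_{l_0}$ and the thresholds agree thereafter; no re-randomization or shift of the driving processes is needed, only reading off the inductive step from that index forward.
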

	\begin{proof}[Proof of Corollary \ref{cor:OrderedSystems}]
		Recall that under the proposed algorithm, the queue-length of the learning system is $0$ at the end of each phase $2$. Hence, the result follows immediately by Proposition \ref{prop:OrderedSystems}. 
	\end{proof}
	\begin{proof} [Proof of Proposition \ref{prop:OrderedSystems}]
		Let us start by proving the first part of Proposition \ref{prop:OrderedSystems}. 
		Since the queue-length process is a jump process, it is sufficient to show that after each jump, the queue-lengths of the two systems satisfy \eqref{ineq1}. Note that the set of potential jump times is the union of the arrival times (jumps times in the arrival process) and the jump times in the Poisson process that determines the service process. Let $\{t_l\}_{l\geq 0}= \{T^A_i\}_{i \geq 0} \cup \{ T^{PD}_i\}_{i \geq 0}$ denote the ordered countable set of potential jump times of the queue-length process, where $t_{l-1} < t_l$. By the superposition property of independent Poisson processes, 
		with probability $1$, $\{T^A_i\}_i \cap \{ T^{PD}_i\}_i  = \emptyset$, so that at any time instant $t_l$, either there is an arrival, or there is a potential departure. Let $Q^G_l$ and $Q^L_l$ denote the queue-lengths immediately before the $l^{th}$ potential jump of the system $G$ and $L$, respectively. Also, let $Q^G_0$ and $Q^L_0$, respectively, denote the initial queue-length of the two systems.
		
		The proof follows by induction. Fix $n >0 $ and assume $Q^G_l \geq Q^L_l$ holds for all $l\leq n$. Immediately  after time $t_{n}$, one of the following can happen:
		\begin{itemize}[leftmargin=*]
			\item If $Q^G_n = Q^L_n$: In case the jump at time $t_n$ is due to a service completion or a service wasted, $Q^G_{n+1} = Q^L_{n+1}$. If the jump is due to a new arriving customer, the dispatcher will make the same choice in both systems, and $Q^G_{n+1} = Q^L_{n+1}$ holds. 
			\item If $Q^G_n > Q^L_n\geq 0$: In case the jump at time $t_n$ is due to a service completion or a service wasted, $Q^G_{n+1} \geq Q^L_{n+1}$. Otherwise, the jump is due to an arriving customer. We have $Q^G_{n+1} \geq Q^G_{n}\geq Q^L_{n}+1 \geq Q^L_{n+1}$. 
		\end{itemize}
		
		Now, let us consider the second part of Proposition \ref{prop:OrderedSystems}. First, we show that $Q^G(t)\geq Q^L(t)$ holds for all $t$. Again, it is sufficient to show $Q^G_l\geq Q^L_l$ for every $l >0$, the proof of which follows by induction. Fix $n>0$ and assume that $Q^G_l \geq Q^L_l $ for all $l\leq n$. Immediately  after $t_n$, one of the following can happen:
		\begin{itemize}[leftmargin=*]
			\item If $Q^G_n = Q^L_n$: In case the jump at time $t_n$ is due to a service completion or a service wasted, then $ Q^G_{n+1} = Q^L_{n+1}$. Otherwise, the jump is due to an arriving customer. Since $K^G_{i}\geq K^L_{i}$  for all $i$, this customer is admitted in system L only if also admitted in system G and we have $Q^G_{n+1} \geq Q^L_{n+1}$.
			\item If $Q^G_n > Q^L_n\geq 0$: As before, either both processes jump in the same direction at time $t_n$ or only one of them jumps (which would be the L system). In either case, $Q^G_{n+1} \geq Q^L_{n+1}$.
		\end{itemize}
		Since $Q^G(t)\geq Q^L(t)$ holds for all $t$ it follows that whenever there is a service completion in system L then there is one also in G. Therefore, $D^G(t) \geq D^L(t)$. 
		
		Now assume that the static thresholds $K^G$ and $K^L$ are used in the systems $G$ and $L$, respectively. To show that every customer who joins the queue in system $L$ also joins the queue in system G, we will show first that $Q^G(t)- Q^L(t) \leq K^G-K^L$. Fix a $n >0$ and assume that $Q^G_l - Q^L_l \leq K^G-K^L$ holds for all $l\leq n$. One of the following can happen immediately after time $t_n$: 
		\begin{itemize}[leftmargin=*]
			\item If $Q^G_n - Q^L_n = K^G-K^L$: Under this case, either we have \{$Q^G_n = K^G$, $Q^L_n = K^L$\}, or \{$Q^L_n \leq Q^G_n < K^G$, $Q^L_n < K^L$\}. Then, only when $Q^G_n = K^G-K^L$, $Q^L_n = 0$, and the jump is due to a service completion or service being wasted, the queue-length processes of the two systems evolve differently: system $G$ has a service completion but not $L$. However, $Q^G_{n+1} - Q^L_{n+1} \leq K^G-K^L$ still holds.
			\item If $Q^G_n - Q^L_n < K^G-K^L$: Either we have \{$ Q^L_n \leq Q^G_n < K^G$, $Q^L_n = K^L$\}, or \{$Q^L_n \leq Q^G_n < K^G$, $Q^L_n < K^L$\}. When \{$Q^L_n \leq Q^G_n < K^G$, $Q^L_n = K^L$\}, if the jump is due to an arriving customer, the dispatcher in the system $G$ will assign this customer to the queue but not the dispatcher in the system $L$. Otherwise, both systems have a service completion. Then, $Q^G_{n+1} - Q^L_{n+1} \leq K^G-K^L$  holds in either case. When \{$Q^L_n \leq Q^G_n < K^G$, $Q^L_n < K^L$\}, if the jump is due to a new arriving customer, the dispatchers in both systems admit the customers to the queue. Otherwise, the jump is due to a service completion or service being wasted, where  it is possible that only in system $G$ there is a service completion. Again, $Q^G_{n+1} - Q^L_{n+1} \leq K^G-K^L$ holds in either cases. 
		\end{itemize}
		At the time $T^A_l$, which  corresponds to the arrival of the $l^{th}$ customer, assume that this customer is admitted to the queue in the system $L$ but not in $G$. 
  We must have  $Q^L_{l} < K^L$ and $Q^G_{l} = K^G$, i.e., $Q^G_{l} - Q^L_{l} > K^G-K^L$. This is a contradiction. Therefore, for any arriving customer, either the dispatchers in both systems $G$ and $L$ make the same admission decision, or only the dispatcher in the system $G$ admits this customer. As a result, any customer who joins the queue in the system $L$ necessarily joins the queue in the system $G$.
	\end{proof}

	\begin{remark}\label{rem:DiffQL}
		In case the genie-aided system and the learning system have different initial queue-lengths, we can introduce a second genie-aided system that has the same initial queue-length as the learning system and is also coupled with the two systems using the procedure from Section~\ref{coupling}. Let $Q'_i$ denote the queue-length of this new system right before the $i^{th}$ arrival customer, $G'(N)$ denote the regret of the learning algorithm with respect to the second genie-aided system.
		Using the triangle inequality and equation \eqref{eq:RegretUpperBound3}, we get:
		\begin{align*}
			\tilde{G}(N)& \leq \bigg(R+\frac{C}{\lambda}\bigg)\mathbb{E}\left[ \sum\limits_{i = 1}^{N} \left\rvert \mathbbm{1}_{\{\bar{Q}_i<\bar{K}_i\}} -  \mathbbm{1}_{\{Q'_i<\bar{K}_i\}} \right\rvert + \Big\rvert \bar{Q}_i   - Q'_i \Big\rvert\right]  + G'(N).
		\end{align*}
		Theorems \ref{thm:Thm1} and \ref{thm:Thm2}  provide regret bounds for $G'(N)$. By Proposition \ref{prop:OrderedSystems},  the orders of $Q'_i$ and $\bar{Q}_i$ are preserved, thus after both queue-length processes hit 0, $Q'_i$ and $\bar{Q}_i$ will evolve together. Since the expected time of both queue-length processes to hit $0$ simultaneously is finite, the regret characterization in Theorems \ref{thm:Thm1} and \ref{thm:Thm2} still holds. 
	\end{remark}
	
	\section{Unique admittance threshold case }\label{section:UniqueThresh}
	
	In this section, we analyze the case where \eqref{eq:ThresholdIneq} holds with strict inequality. In this case, the genie-aided dispatcher uses a unique optimal threshold $\bar{K}$, and the resulting queue-length process has a stationary distribution. 
	
	In section \ref{subsection:SASampleEst}, we start by providing an estimate for the number of samples \Added{of completed service times} that the learning algorithm uses in order to \Added{estimate the average service time, and then to} update the threshold policy for each phase $2$: see Proposition \ref{prop:SampleEst}. We use it to estimate the probability that the learning system \Added{can obtain an accurate estimate of the average service time: see Proposition \ref{prop:AccuServ}. Combining the above estimate with the probability that the learning system can obtain an accurate estimation on the arrival rate, see Proposition \ref{prop:AccuArri}, we can bound the probability of the learning system using the same threshold as the genie-aided system; see Corollary \ref{cor:SameThreshProb}. } In section \ref{subsection:PhaseReg}, we estimate the regret of the learning algorithm because of  having  phase $1$ (if used) and using incorrect thresholds in phase $2$ separately. Proposition \ref{prop:BadEventProb} we consider ``bad" events where there will be regret accumulated during phase $2$ \Added{because of using the wrong threshold}. In addition, we will use an upper bound on the difference between the queue-length processes of the learning and genie-aided system to bound the regret accumulated \Added{because of the existence of  phase $1$ (if used) in Lemma \ref{lem:Phase1Reg} and because of using the wrong threshold during phase $2$ in Lemma \ref{lem:Phase2Reg}. The proof of Theorem \ref{thm:Thm1} and \ref{thm:Thm2} are stated in  section \ref{subsection:pfthm1} and \ref{subsection:pfthm2} respectively.}

    \Added{\subsection{Sample estimation}\label{subsection:SASampleEst} First, we state and prove some results on the number of  samples the learning dispatcher gets on the inter-arrival times and completed service times, and the resulting implications on the estimates of the arrival and service rates.}
	
	In the following proposition, we show that with high probability, the number of  samples \Added{of completed service times} that the learning algorithm can observe is sufficiently large at the beginning of the phase $2$ of the $j^{th}$ batch. For this, we use the fact that (by design) each phase $2$ is longer than phase $1$.
 
	\begin{prop}\label{prop:SampleEst}
		Let $D_j$ denote the number of observed service times up to the beginning of phase $2$ of the $j^{th}$ batch. Then,
		\Added{\begin{align*}
				\mathbb{P}\left[ D_j \leq \frac{l_1\ln^{1+\epsilon}(j)\mu}{4(1+\epsilon)(\lambda+\mu)}\right] &\leq  \exp\left( -\frac{l_1\ln^{1+\epsilon}(j)\mu}{16(1+\epsilon)(\lambda+\mu)}\right) + \exp\left( -\frac{C_0(\epsilon)}{8}-\frac{\ln^{1+\epsilon}(j)}{8(1+\epsilon)}\right),
		\end{align*} 
  where $C_0(\epsilon) := 1 + \sum_{i = 2}^{\floor{e^{\epsilon}}} \frac{\ln^{\epsilon}(i)}{i} - \frac{\ln^{1+\epsilon}(\ceil{e^{\epsilon}})}{1+\epsilon}$ is a constant depending on the choice of $\epsilon$.}
	\end{prop}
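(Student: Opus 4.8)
The plan is to control $D_j$, the number of completed service times observed up to the start of phase 2 of the $j^{\mathrm{th}}$ batch, from below by a high-probability argument that splits into two independent sources of randomness: (i) how many phase-1 exploration windows have occurred among batches $1,\dots,j-1$, and (ii) how many of the $l_1$ admitted customers in those windows actually complete service before the window (and hence the batch) ends. For (i), note that a phase 1 in batch $k>1$ occurs only when $B^k=1$, and, crucially, \emph{always} occurs when the previous threshold was $0$; when the previous threshold was positive, customers are still admitted during phase 2, so services still accumulate. The cleanest route is to lower bound the number of \emph{exploration} phases by a sum of independent Bernoullis: conditionally on the algorithm's history, the batches in which $B^k=1$ form a set whose size stochastically dominates (or is exactly) $\sum_{k=2}^{j-1}\mathrm{Bernoulli}(\ln^{\epsilon}(k)/k)$, since $B^k$ is independent of everything else. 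The mean of this sum is $\sum_{k=2}^{j-1}\ln^{\epsilon}(k)/k \ge \int_{e^{\epsilon}}^{j-1}\frac{\ln^{\epsilon}(x)}{x}\,dx + (\text{correction}) = \frac{\ln^{1+\epsilon}(j-1)}{1+\epsilon} - \frac{\ln^{1+\epsilon}(\lceil e^{\epsilon}\rceil)}{1+\epsilon} + \big(1 + \sum_{i=2}^{\lfloor e^{\epsilon}\rfloor}\ln^{\epsilon}(i)/i\big)$, which is where the constant $C_0(\epsilon)$ comes from. A multiplicative Chernoff bound (lower tail, at half the mean) then gives that with probability at least $1-\exp\!\big(-\tfrac12\cdot\tfrac12\big(\tfrac{\ln^{1+\epsilon}(j)}{1+\epsilon}+C_0(\epsilon)\big)\big)$ — matching the second exponential term up to the bookkeeping — there are at least $\tfrac12\big(\tfrac{\ln^{1+\epsilon}(j)}{1+\epsilon}+C_0(\epsilon)\big)$ exploration phases, hence at least that many $\times\, l_1$ customers admitted in exploration phases before phase 2 of batch $j$.

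Next, condition on that event and bound (ii). Each admitted exploration customer is served at rate $\mu$ while waiting, and the relevant competing clock is the arrival process at rate $\lambda$ (since a phase-1 window of $l_1$ arrivals, and more generally all the batching, is driven by arrival epochs). A convenient exact device: among the admitted customers and using the coupled Poisson processes $P$ (rate $\mu$) and $N_A$ (rate $\lambda$), each ``potential departure'' vs.\ ``next relevant arrival'' race is won by a departure with probability $\mu/(\lambda+\mu)$ independently, so the number of services completed among $M$ admitted customers during the window stochastically dominates a $\mathrm{Binomial}(M,\mu/(\lambda+\mu))$ — actually one should be a little careful and instead just lower bound $D_j$ by the number of potential-departure jumps of $P$ occurring before the $M$-th subsequent arrival while the queue is nonempty; since during an exploration phase the queue is kept nonempty by forced admissions (threshold $l_1>1$), no potential departure is wasted until the backlog clears, and a clean $\mathrm{Binomial}(M,\mu/(\lambda+\mu))$-type lower bound goes through. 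With $M \ge \tfrac{l_1}{2}\big(\tfrac{\ln^{1+\epsilon}(j)}{1+\epsilon}+C_0(\epsilon)\big) \ge \tfrac{l_1\ln^{1+\epsilon}(j)}{2(1+\epsilon)}$ (dropping the nonnegative constant for the threshold in the statement, but keeping it live in the exponent), the mean number of completed services is at least $\tfrac{l_1\ln^{1+\epsilon}(j)\mu}{2(1+\epsilon)(\lambda+\mu)}$, and a second multiplicative Chernoff bound at half this mean yields $D_j > \tfrac{l_1\ln^{1+\epsilon}(j)\mu}{4(1+\epsilon)(\lambda+\mu)}$ with probability at least $1-\exp\!\big(-\tfrac{l_1\ln^{1+\epsilon}(j)\mu}{16(1+\epsilon)(\lambda+\mu)}\big)$, the first exponential term.

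Finally, I would assemble the two pieces by a union bound: $\mathbb{P}[D_j \le \tfrac{l_1\ln^{1+\epsilon}(j)\mu}{4(1+\epsilon)(\lambda+\mu)}]$ is at most the probability that the exploration-phase-count event fails, plus the probability that, given enough exploration customers, too few complete service. Tracking the constants so that the second failure probability absorbs the leftover $C_0(\epsilon)/8$ (from carrying the constant inside the first Chernoff exponent rather than discarding it) gives exactly the stated bound. The main obstacle, and the step that needs the most care, is the conditional-independence argument in part (ii): making precise that, irrespective of the (history-dependent) identities of the batches in which phase 1 occurs, the service completions among forced-admitted customers can be lower bounded by an i.i.d.\ Binomial — this requires using the memorylessness of the exponential service times together with the coupling through $P$ from Section~\ref{coupling}, and being explicit that during an exploration phase no potential departure is wasted (the forced admissions with $K_i=l_1>1$ keep the queue nonempty), so that the comparison to the coupled Poisson races is valid. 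The batching/renewal structure (each batch boundary is an arrival epoch, phase 2 ends only when the queue empties) also has to be invoked to guarantee that exploration customers admitted in batch $k<j$ have genuinely departed — not merely been admitted — by the time phase 2 of batch $j$ begins, which is where the design fact ``each phase 2 is longer than phase 1'' and ``$Q=0$ at the end of each phase 2'' are used.
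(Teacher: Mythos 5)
Your overall plan---control the number of ``useful'' arrivals seen before phase 2 of batch $j$, then pass to observed services via Poisson-race indicators with success probability $\mu/(\lambda+\mu)$, and assemble with a union bound---matches the paper's proof in outline, and your part~(ii) lines up with the paper's use of the indicators $\zeta_i$ (a potential departure occurring in an inter-arrival gap while the threshold is $\geq 1$).

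However, there is a genuine gap in part~(i). You claim the number of \emph{exploration phases} stochastically dominates $\sum_{k}\mathrm{Bernoulli}(\ln^{\epsilon}(k)/k)$, and then conclude there are ``at least that many $\times\, l_1$ customers admitted in exploration phases.'' This is false: an exploration phase in batch $k>1$ requires \emph{both} $B^k=1$ \emph{and} $K(k-1)=0$, so the number of exploration phases up to batch $j$ equals $1+\sum_{k=2}^{j}\mathbbm{1}_{\{K(k-1)=0\}}B^k$, which is $\leq 1+\sum_{k=2}^j B^k$, not $\geq$. In fact, when $\bar K>0$ and the algorithm learns quickly, one expects exactly one exploration phase ever, so any attempt to show exploration phases are $\Theta(\ln^{1+\epsilon}(j))$ in number is doomed. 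Your parenthetical observation---``when the previous threshold was positive, customers are still admitted during phase 2, so services still accumulate''---is precisely what is needed, but your formal argument never uses it. What the paper actually bounds is $\hat X^j$, the total number of arrivals seen with a threshold $\geq 1$ (from \emph{either} a phase 1 \emph{or} a phase 2 with positive threshold), via
\[
\hat X^j \;\geq\; l_1 + \sum_{i=1}^{j-1}\Big(\mathbbm{1}_{\{K(i)>0\}}\,\alpha_i l_2 + \mathbbm{1}_{\{K(i)=0\}}\,B^{i+1}l_1\Big) \;\geq\; l_1\sum_{i=1}^j B^i,
\]
where the second inequality uses $\alpha_i l_2 \geq l_1 \geq l_1 B^{i+1}$ when $K(i)>0$ and is an equality when $K(i)=0$. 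In words, each realization $B^{i+1}=1$ is ``paid for'' by at least $l_1$ useful arrivals whether or not an actual exploration occurs. Once you substitute ``$\hat X^j \geq l_1\sum B^k$'' for ``number of exploration customers $\geq l_1\sum B^k$,'' your Chernoff step, your Poisson-race lower bound on completed services, and the final union bound all go through essentially as in the paper. A smaller inaccuracy: you say a phase 1 ``\emph{always} occurs when the previous threshold was $0$''; in the algorithm it also requires $B^k=1$, so it occurs with probability $\ln^{\epsilon}(k)/k$ in that case. This does not affect the Chernoff calculation, but the phrasing is misleading.
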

	\begin{proof}
		Consider the epoch which is the beginning of phase $2$ of the $j^{th}$ batch. Let $\hat{X}^j$ denote the total number of arrivals that the learning dispatcher sees during the past batches and the potential phase 1 of the $j^{th}$ batch
        . Note that $\hat{X}^j$ counts for the arrivals in phase 1's (when they occur), and all past phase 2's using a threshold $\geq 1$.
		
		The following inequality holds when $\alpha_jl_2\geq l_1$ for all $j$:
	\begin{align*}
		\hat{X}^j& \geq l_1 +\sum_{i = 1}^{j-1} \left(\mathbbm{1}_{\{K(i)>0\}} \alpha_i l_2 + \mathbbm{1}_{\{K(i)= 0\}} B^{i+1} l_1 \right)\geq l_1\sum_{i = 1}^{j} B^i .
	\end{align*} 
	\Added{Observing the function $\ln^{\epsilon}(x)/x$ is decreasing when $x\geq e^{\epsilon}$, when $j\geq \ceil{e^{\epsilon}}$, we have:
 
 \begin{align*}
    &\frac{\ln^{1+\epsilon}(j)}{1+ \epsilon} - \frac{\ln^{1+\epsilon}(\ceil{e^{\epsilon}})}{1+\epsilon} = \int_{\ceil{e^{\epsilon}}}^{j} \frac{\ln^{\epsilon}(x)}{x}dx \leq \sum_{i = \ceil{e^{\epsilon}}}^{j}\frac{\ln^{\epsilon}(i)}{i} ,\\
    &\sum_{i = \ceil{e^{\epsilon}}}^{j}\frac{\ln^{\epsilon}(i)}{i}  \leq \frac{\ln^{\epsilon}(\ceil{e^{\epsilon}})}{\ceil{e^{\epsilon}}} + \int_{e^{\epsilon}}^{j} \frac{\ln^{\epsilon}(x)}{x}dx  = \frac{\ln^{\epsilon}(\ceil{e^{\epsilon}})}{\ceil{e^{\epsilon}}} + \frac{\ln^{1+\epsilon}(j)}{1+\epsilon} - \frac{\ln^{1+\epsilon}(e^{\epsilon})}{1+\epsilon}.
 \end{align*}

 Set  
 $$C_0(\epsilon) := 1 + \sum_{i = 2}^{\floor{e^{\epsilon}}} \frac{\ln^{\epsilon}(i)}{i} - \frac{\ln^{1+\epsilon}(\ceil{e^{\epsilon}})}{1+\epsilon}\qquad \text{and}\qquad \tilde{C}_0(\epsilon) := 1 + \sum_{i = 2}^{\ceil{e^{\epsilon}}} \frac{\ln^{\epsilon}(i)}{i} - \frac{\ln^{1+\epsilon}(e^{\epsilon})}{1+\epsilon},$$ 
 we get: 
 $$ C_0(\epsilon)+\frac{\ln^{1+\epsilon}(j)}{1+\epsilon}\leq  \mathbb{E}\left[ \sum_{i = 1}^{j} B^i \right] \leq \tilde{C}_0(\epsilon) + \frac{\ln^{1 + \epsilon}(j)}{1 + \epsilon}.$$
 Using the multiplicative Chernoff bound for independent Bernoulli random variables,  the inequalities above, and $\tilde{C}_0(\epsilon) \geq 0 $ for all $\epsilon >0 $, 
 we get the following upper bound on the probability of $\hat{X}^j$ being small: 
	\begin{align*}
		\mathbb{P}\left[\hat{X}^j <\frac{ l_1\ln^{1+\epsilon}(j)}{2(1+\epsilon)} \right]&\leq \mathbb{P}\left[ l_1\sum_{i = 1}^{j} B^j<\frac{ l_1\ln^{1+\epsilon}(j)}{2(1+\epsilon)}\right]  \leq \exp\left( -\frac{C_0(\epsilon)}{8}-\frac{\ln^{1+ \epsilon}(j)}{8(1+\epsilon)}\right).
	\end{align*}
 }
 
    \Added{Recall that $i$ is the index of the customers arriving from the very beginning.} Let $\zeta_i$ be a Bernoulli random variable such that $\zeta_i = 1$ when there is at least one potential service completion between the arrival time of the $i^{th}$ and $(i+1)^{th}$ customer.  The random variables $\{\zeta_i\}_{i}$  are \emph{i.i.d}. and $\mathbb{P}[\zeta_i = 1] = \mu/(\lambda+\mu)$. When the threshold used is at least 1, if the $i^{th}$ customer is rejected, the queue-length at the arrival of this customer is non-zero
    ; obviously, when the $i^{th}$ customer is admitted to the queue, the queue-length right after the arrival of this customer is non-zero. 
    In either case, if there are any potential services during the inter-arrival times between the $i^{th}$ and $(i+1)^{th}$ customers, at least one of the completed services is observed by the learning dispatcher. \Added{This implies that   $\sum_{i \text{ counted in } \hat{X}_j } \zeta_i 
 = \sum_{n = 0}^{\hat{X}_j} \zeta_{cnt_n} \leq D_j$, where $cnt_n$ is a sub-sequence of $i$ and $cnt_n$ is the index from the beginning of the $n^{th}$ arrival customer that is counted in $\hat{X}_j$.  }  Then we have: 
 \Added{
	\begin{align*}
  \mathbb{P}\left[ \left. D_j \leq \frac{l_1\ln^{1+\epsilon}(j)\mu}{4(1+\epsilon)(\lambda+\mu)}\right\rvert \hat{X}^j \geq \frac{l_1\ln^{1+ \epsilon}(j)}{2(1+\epsilon)} \right] 
  &\leq
  \mathbb{P}\left[ \sum_{n = 1}^{\lceil l_1\ln^{1+\epsilon}(j)/2(1+\epsilon) \rceil}\zeta_{cnt_n} \leq \frac{l_1\ln^{1+\epsilon}(j)\mu}{4(1+\epsilon)(\lambda+\mu)}\right]\\ 
  &\leq
  \exp\left( -\frac{l_1\ln^{1+\epsilon}(j)\mu}{16(1+\epsilon)(\lambda+\mu)}\right).
	\end{align*}
    }
	We dropped the conditioning in the first inequality using $\sum_{i = 1}^{\hat{X}^j} \zeta_i \leq D_j$, and $\mathbb{P}[\sum_{i = 1}^{n+1} \zeta_i \leq c] \leq \mathbb{P}[\sum_{i = 1}^{n} \zeta_i\leq c]$  for all $n, c \in \mathbb{Z}^+$, and the second inequality follows from multiplicative Chernoff bound for independent Bernoulli random variables. Combining the results above, we obtain:
 \Added{
 \small{
	\begin{align*}
		\mathbb{P}\left[ D_j \leq \frac{l_1\ln^{1+\epsilon}(j)\mu}{4(1+\epsilon)(\lambda+\mu)}\right]
		& = \mathbb{P} \left[ \left. D_n \leq \frac{l_1\ln^{1+\epsilon}(j)\mu}{4(1+\epsilon)(\lambda+\mu)}\right\rvert \hat{X}^j \geq \frac{l_1\ln^{1+\epsilon}(j)}{2(1+\epsilon)}\right] \mathbb{P}\left[\hat{X}^j \geq \frac{l_1\ln^{1+\epsilon}(j)}{2(1+\epsilon)}\right]\\
		&\quad\quad + \mathbb{P}\left[\left. D_j \leq \frac{l_1\ln^{1+\epsilon}(j)\mu}{4(1+\epsilon)(\lambda+\mu)}\right\rvert \hat{X}^j < \frac{l_1\ln^{1+\epsilon}(j)}{2(1+\epsilon)}\right] \mathbb{P}\left[\hat{X}^j< \frac{l_1\ln^{1+\epsilon}(j)}{2(1+\epsilon)}\right]\\
		&\leq  \exp\left( -\frac{l_1\ln^{1+\epsilon}(j)\mu}{16(1+\epsilon)(\lambda+\mu)}\right) + \exp\left( -\frac{C_0(\epsilon)}{8}-\frac{\ln^{1+\epsilon}(j)}{8(1+\epsilon)}\right).
	\end{align*}
}
 }
This completes the proof.
 	\end{proof}

 Using Proposition~\ref{prop:SampleEst} above, in the next proposition we will establish that with high probability, \Added{the learning dispatcher will have an accurate estimate of the average service time, and therefore the service rate}. 
	
	\begin{prop}\label{prop:AccuServ}
		\Added{Let $\hat{m}(j)$ denote  the empirical service time estimated by the learning dispatcher at the beginning of phase 2 of the $j^{th}$ batch.  For the proposed algorithm,
		\begin{align}\label{eq:AccuServ}
				\mathbb{P}\left[ \left\rvert \hat{m}(j) - m\right\rvert> \Delta_1 \right]\leq C_1\exp(-C_2\ln^{1+\epsilon}(j)),
		\end{align}
	where 
		\begin{align}\label{def:c1c2}
			\begin{split}
			C_1 &:= \max\left\{ \exp\left(-\frac{C_0(\epsilon)}{8}\right),\; \frac{2\exp{\left( \Delta_1^2/(8m^2)\right)}}{\exp{( \Delta_1^2/(8m^2))}-1},\; 1\right\},\\
			C_2 &:= \min\left\{ \frac{l_1\mu}{16(1+\epsilon)(\lambda+\mu)}, \;\frac{1}{8(1+\epsilon)}, \;\frac{l_1\mu\Delta_1^2}{32(1+\epsilon)m(\lambda m+1)}\right\},
		\end{split}
        \end{align}
    with $\Delta_1 := \min\{ \delta_1, 2m\}$, and $\delta_1$ is the constant from inequality \eqref{eq:RegionOfParam} which is one part of the condition needed for the conclusion in \eqref{eq:CorrectThreshold}. } 
	\end{prop}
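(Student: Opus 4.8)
The plan is to decompose the event $\{|\hat m(j)-m|>\Delta_1\}$ according to whether the number of observed service samples $D_j$ at the start of phase $2$ of batch $j$ is large or small, and then apply Proposition~\ref{prop:SampleEst} to the small-sample part and a Chernoff/Hoeffding-type bound (for sums of i.i.d.\ exponentials) to the large-sample part. Write $d_j := \frac{l_1\ln^{1+\epsilon}(j)\mu}{4(1+\epsilon)(\lambda+\mu)}$ for the threshold appearing in Proposition~\ref{prop:SampleEst}. Conditioning on $D_j$, and noting that $\hat m(j)$ is the empirical mean of $D_j$ i.i.d.\ $\mathrm{EXP}(\mu)$ random variables (service times are i.i.d.\ and the samples used are genuine completed-service times, as argued in Section~\ref{subsection:LearningAlg}), I would write
\begin{align*}
	\mathbb{P}\left[ |\hat m(j)-m|>\Delta_1\right] \leq \mathbb{P}\left[D_j\leq d_j\right] + \mathbb{P}\left[|\hat m(j)-m|>\Delta_1,\ D_j> d_j\right].
\end{align*}
The first term is bounded directly by Proposition~\ref{prop:SampleEst}. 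For the second term, because the tail probability of $|\bar X_n - m|>\Delta_1$ for an empirical mean $\bar X_n$ of i.i.d.\ exponentials is decreasing in $n$, I can replace $D_j$ by the deterministic floor $\lceil d_j\rceil$ and bound the second term by $\mathbb{P}\big[|\bar X_{\lceil d_j\rceil}-m|>\Delta_1\big]$.

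The next step is the concentration bound for $\bar X_n$ with $n=\lceil d_j\rceil$. Here I would use the standard sub-exponential tail bounds for a sum of i.i.d.\ $\mathrm{EXP}(\mu)$ variables: for the lower tail, $\mathbb{P}[\bar X_n < m-\Delta_1]\leq \exp(-n\,\psi_-(\Delta_1))$, and for the upper tail $\mathbb{P}[\bar X_n > m+\Delta_1]\leq \exp(-n\,\psi_+(\Delta_1))$, where the rate functions $\psi_\pm$ are obtained from the moment generating function $\mathbb{E}[e^{\theta(X-m)}] = e^{-\theta m}/(1-\theta m)$ for $\theta<1/m$. Optimizing crudely (rather than exactly) and using $\Delta_1\leq 2m$ so that $\Delta_1/m\in(0,2]$, one gets a bound of the form $\exp(-c\, n\, \Delta_1^2/m^2)$ with an explicit absolute constant $c$; a clean choice is $c=\tfrac18$, which matches the exponent $\Delta_1^2/(8m^2)$ appearing in $C_1$. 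Combining the upper and lower tails gives a factor of $2$, and substituting $n=\lceil d_j\rceil\geq d_j = \frac{l_1\ln^{1+\epsilon}(j)\mu}{4(1+\epsilon)(\lambda+\mu)}$ turns the exponent into $-\frac{l_1\mu\Delta_1^2}{32(1+\epsilon)m^2(\lambda+\mu)}\ln^{1+\epsilon}(j)$, i.e.\ essentially the third term in the definition of $C_2$ in \eqref{def:c1c2} (modulo how the $\lambda,\mu,m$ factors are grouped; $\lambda m+1 = m(\lambda+\mu)\cdot\ldots$ up to the substitution $m=1/\mu$, which is how the stated form $\frac{l_1\mu\Delta_1^2}{32(1+\epsilon)m(\lambda m+1)}$ arises).

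Finally I would collect the three exponential terms --- $\exp(-\frac{l_1\ln^{1+\epsilon}(j)\mu}{16(1+\epsilon)(\lambda+\mu)})$, $\exp(-\frac{C_0(\epsilon)}{8}-\frac{\ln^{1+\epsilon}(j)}{8(1+\epsilon)})$ from Proposition~\ref{prop:SampleEst}, and $2\exp(-\frac{l_1\mu\Delta_1^2}{32(1+\epsilon)m(\lambda m+1)}\ln^{1+\epsilon}(j))$ from the concentration step --- and bound their sum by a single term $C_1\exp(-C_2\ln^{1+\epsilon}(j))$. Pulling out the common factor $\exp(-C_2\ln^{1+\epsilon}(j))$ with $C_2$ the minimum of the three rates forces the leftover constants: the term $\exp(-\frac{C_0(\epsilon)}{8})$ (from the second summand, whose non-$j$ part is $\exp(-C_0(\epsilon)/8)$) and the geometric-type factor $\frac{2\exp(\Delta_1^2/(8m^2))}{\exp(\Delta_1^2/(8m^2))-1}$ (which dominates the constant $2$ once one reindexes, and also dominates $1$), giving the stated $C_1$ as the max of these with $1$. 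The main obstacle is the concentration step: getting a \emph{clean, fully explicit} sub-exponential tail bound for the empirical mean of exponentials with a constant that lines up exactly with the $\Delta_1^2/(8m^2)$ in $C_1$ and the rate in $C_2$ requires care with the MGF optimization and the regime restriction $\Delta_1\leq 2m$ (which is exactly why $\Delta_1$ is defined as $\min\{\delta_1,2m\}$); everything else is bookkeeping of exponential terms.
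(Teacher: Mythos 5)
Your overall structure (decompose via Proposition~\ref{prop:SampleEst}, then apply sub-exponential concentration) matches the paper's, but the step where you ``replace $D_j$ by the deterministic floor $\lceil d_j\rceil$ and bound the second term by $\mathbb{P}\big[|\bar X_{\lceil d_j\rceil}-m|>\Delta_1\big]$'' contains a genuine gap. Two problems: first, the asserted monotonicity of $\mathbb{P}[|\bar X_n - m|>\Delta_1]$ in $n$ for i.i.d.\ exponentials is not an established fact and you neither prove nor cite it (unlike the symmetric unimodal case, the exponential is skewed, and the exact tails of $\mathrm{Gamma}(n,n\mu)$ need not be monotone in $n$ without argument). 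Second, and more importantly, even granting monotonicity, you cannot substitute the deterministic $\lceil d_j\rceil$ for the random $D_j$ without an independence argument: $D_j$ is determined in part by the very service times entering $\hat m(j)$, so $\mathbb{P}[|\hat m(j)-m|>\Delta_1,\ D_j>d_j]=\sum_{k>d_j}\mathbb{P}[|\bar X_k-m|>\Delta_1,\ D_j=k]$ and you cannot pull a factor $\mathbb{P}[D_j=k]$ out of each summand. Monotonicity only controls $\mathbb{P}[|\bar X_k-m|>\Delta_1]$ termwise; applied to infinitely many summands it gives nothing finite.

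The paper sidesteps both issues by using a union bound instead: since $\{|\hat m(j)-m|>\Delta_1,\ D_j>n\}\subseteq\bigcup_{k>n}\{|\bar X_k-m|>\Delta_1\}$, one gets
\[
\mathbb{P}\bigl[|\hat m(j)-m|>\Delta_1,\ D_j>n\bigr]\le\sum_{k=n+1}^{\infty}\mathbb{P}\!\left[\left|\textstyle\sum_{i=1}^k S_i - km\right|\ge k\Delta_1\right]\le\sum_{k=n+1}^{\infty}2\exp\!\left(-\tfrac{k\Delta_1^2}{8m^2}\right),
\]
which holds regardless of any dependence between $D_j$ and the service times and requires no monotonicity claim. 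Summing the geometric series is precisely what produces the factor $\tfrac{2\exp(\Delta_1^2/(8m^2))}{\exp(\Delta_1^2/(8m^2))-1}$ in $C_1$ --- the constant you noticed does not arise from your single-term bound. If you want to salvage your approach you would need to replace the monotonicity-plus-substitution step by this union bound (or prove the needed conditional independence and monotonicity); as written, the step does not go through.
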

	
			The proof of the proposition relies upon tail concentration bounds for sub-exponential random variables. We follow the definition and concentration bounds as in \cite[Section~2.1]{wainwright_2019}. 
			\begin{defn}\label{def:SubExp}
				A random variable $X$ with mean $\mu$ is called sub-exponential if there are non-negative parameters ($\alpha^2, \beta$) such that $\mathbb{E}[e^{\gamma(X-\mu)}]\leq e^{\frac{\alpha^2\gamma^2}{2}}$ for all $\rvert \gamma\rvert < \frac{1}{\beta}$.
			\end{defn} 
			\begin{prop}\label{prop:SubExpTailBound}
				Suppose that $X$ is sub-exponential with parameters ($\alpha^2,\beta$). Then:
				$$\mathbb{P}[X\geq \mu +t] \leq\begin{cases} 
					e^{-\frac{t^2}{2\alpha^2}}, &0\leq t\leq \frac{\alpha^2}{\beta}, \\
					e^{-\frac{t}{2\beta}}, & t\geq \frac{\alpha^2}{\beta},
				\end{cases} = \max{\left\{ e^{-\frac{t^2}{2\alpha^2}} , e^{-\frac{t}{2\beta}} \right\}}.
				$$
			\end{prop}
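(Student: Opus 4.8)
The plan is the standard Chernoff argument: bound the upper tail by an exponentially tilted moment and then optimize the tilting parameter over the admissible range $[0,1/\beta)$ dictated by Definition~\ref{def:SubExp}. First I would fix $t>0$ and, for any $\gamma$ with $0\le\gamma<1/\beta$, apply Markov's inequality to the nonnegative random variable $e^{\gamma(X-\mu)}$ together with the sub-exponential moment bound:
\[
\mathbb{P}[X\ge\mu+t]=\mathbb{P}\big[e^{\gamma(X-\mu)}\ge e^{\gamma t}\big]\le e^{-\gamma t}\,\mathbb{E}\big[e^{\gamma(X-\mu)}\big]\le \exp\!\Big(-\gamma t+\tfrac{\alpha^2\gamma^2}{2}\Big).
\]
It then remains to minimize the quadratic $g(\gamma):=-\gamma t+\alpha^2\gamma^2/2$ over $\gamma\in[0,1/\beta)$.

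The unconstrained minimizer of $g$ is $\gamma^\star=t/\alpha^2$, with $g(\gamma^\star)=-t^2/(2\alpha^2)$, and this value is admissible exactly when $t/\alpha^2<1/\beta$, i.e.\ $t<\alpha^2/\beta$. Hence for $0\le t\le\alpha^2/\beta$ (the endpoint being covered by continuity) I would take $\gamma=t/\alpha^2$ to obtain $\mathbb{P}[X\ge\mu+t]\le e^{-t^2/(2\alpha^2)}$, which is the first branch.

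For $t\ge\alpha^2/\beta$ the quadratic $g$ is decreasing throughout $[0,1/\beta)$, so its infimum over that range is approached as $\gamma\uparrow1/\beta$. I would dispatch the open endpoint by a limiting argument: since $e^{\gamma(X-\mu)}\ge0$, Fatou's lemma gives $\mathbb{E}[e^{(1/\beta)(X-\mu)}]\le\liminf_{\gamma\uparrow1/\beta}\mathbb{E}[e^{\gamma(X-\mu)}]\le e^{\alpha^2/(2\beta^2)}$, so the displayed Chernoff inequality remains valid at $\gamma=1/\beta$ (alternatively, take $\gamma=(1-\eta)/\beta$ and let $\eta\downarrow0$). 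This yields $\mathbb{P}[X\ge\mu+t]\le\exp(-t/\beta+\alpha^2/(2\beta^2))$, and the hypothesis $t\ge\alpha^2/\beta$ forces $\alpha^2/(2\beta^2)\le t/(2\beta)$, so the exponent is at most $-t/(2\beta)$, giving the second branch. Finally, a one-line comparison shows $e^{-t^2/(2\alpha^2)}\ge e^{-t/(2\beta)}$ precisely when $t\le\alpha^2/\beta$, so the two-branch bound agrees with the stated maximum on their common boundary and everywhere else, completing the proof. There is no genuine obstacle here—the computation is textbook—and the only point needing care is the strict inequality $|\gamma|<1/\beta$ in the definition of sub-exponentiality, which is exactly why the Fatou/limiting step is inserted to license the choice $\gamma=1/\beta$ in the heavy-tail regime.
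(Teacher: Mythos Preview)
Your proof is correct and is the standard Chernoff--Cram\'er argument. The paper does not supply its own proof of this proposition; it simply cites the result from \cite[Section~2.1]{wainwright_2019}, and what you have written is essentially the argument found there.
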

		\begin{proof}[Proof of Proposition \ref{prop:AccuServ}]
		 Let $S_i$ denote the service time of the $i^{th}$ service completion. Since $S_i$ are \emph{i.i.d.} with distribution EXP$(1/m)$, which is a $(4m^2, 2m)$ sub-exponential random variable, $\sum_{i = 1}^{n} S_i$ is a $(4m^2n, 2m)$ sub-exponential random variable; see \cite[Section~2.8]{HDP_RV}. Observe that $0\leq k\Delta_1 \leq 2mk$. Using the sub-exponential concentration bounds above, we get:
		\begin{align*}
			\mathbb{P}\left[ \left\rvert \hat{m}(j) - m\right\rvert> \Delta_1  \rvert D_j > n\right] &\leq \sum\limits_{k = n+1}^{\infty}\mathbb{P}\left[ \left\lvert \sum\limits_{i = 1}^{k}S_i - km \right\rvert \geq k\Delta_1 \right] \\
			&\leq  \sum\limits_{k = n+1}^{\infty}2 \exp{\left( -\frac{k\Delta_1^2}{8m^2}\right)}
            \leq \frac{2\exp{\left( \Delta_1^2/(8m^2)\right)}}{\exp{( \Delta_1^2/(8m^2))}-1}\exp{\left( -\frac{(n+1)\Delta_1^2}{8m^2}\right)}.
		\end{align*}
		The third inequality follows by the  geometric sum formula. 
  
 \Added{
 Then, substituting $n  = \floor{l_1\ln^{1 + \epsilon}(j)\mu/(4(1+\epsilon)(\lambda+\mu))}$, we get:
  {\small
		\begin{align*}
			\mathbb{P}\left[ \left\rvert \hat{m}(j) - m\right\rvert> \Delta_1   \left\rvert D_j >\frac{l_1\ln^{1 + \epsilon}(j)\mu}{4(1 + \epsilon)(\lambda+\mu)} \right.\right]
			& =\mathbb{P}\left[ \left\rvert \hat{m}(j) - m\right\rvert> \Delta_1  \left\rvert D_j > \floor*{ \frac{l_1\ln^{1 + \epsilon}(j)\mu}{4(1 + \epsilon)(\lambda+\mu)} }\right.\right]\\
			& \leq \frac{2\exp{\left( \Delta_1^2/(8m^2)\right)}}{\exp{( \Delta_1^2/(8m^2))}-1}\exp{\left( -\left(\floor*{\frac{l_1\ln^{1 + \epsilon}(j)\mu}{4(1 + \epsilon)(\lambda+\mu)} } +1\right)\frac{\Delta_1^2}{8m^2}\right)}\\
			&\leq \frac{2\exp{\left( \Delta_1^2/(8m^2)\right)}}{\exp{( \Delta_1^2/(8m^2))}-1}\exp{\left( -\frac{l_1\mu\ln^{1 + \epsilon}(j)\Delta_1^2}{32(1 + \epsilon)m^2(\lambda+\mu)}\right)}.
		\end{align*}
  }
		Using the last upper bound and Proposition \ref{prop:SampleEst}, we find:
		\begin{align*}
			\mathbb{P}\left[ \left\rvert \hat{m}(j) - m\right\rvert> \Delta_1  \right] &=\mathbb{P}\left[ \left\rvert \hat{m}(j) - m\right\rvert> \Delta_1  \left\rvert D_j\leq\frac{l_1\ln^{1+\epsilon}(j)\mu}{4(1+\epsilon)(\lambda+\mu)} \right.\right]\mathbb{P}\left[D_j\leq\frac{l_1\ln^{1+\epsilon}(j)\mu}{4(1+\epsilon)(\lambda+\mu)}\right]  \\
			&\qquad+\mathbb{P}\left[ \left\rvert \hat{m}(j) = m\right\rvert> \Delta_1  \left\rvert D_j>\frac{l_1\ln^{1+\epsilon}(j)\mu}{4(1+\epsilon)(\lambda+\mu)} \right.\right]\mathbb{P}\left[D_j>\frac{l_1\ln^{1+\epsilon}(j)\mu}{4(1+\epsilon)(\lambda+\mu)}\right]  \\
			&\leq \exp\left( -\frac{l_1\ln^{1+\epsilon}(j)\mu}{16(1+\epsilon)(\lambda+\mu)}\right) + \exp\left( -\frac{C_0(\epsilon)}{8}-\frac{\ln^{1+\epsilon}(j)}{8(1+\epsilon)}\right) \\
			&\quad\quad+ \frac{2\exp{\left( \Delta_1^2/(8m^2)\right)}}{\exp{( \Delta_1^2/(8m^2))}-1}\exp{\left( -\frac{l_1\mu\ln^{1+\epsilon}(j)\Delta_1^2}{32(1+\epsilon)m^2(\lambda+\mu)}\right)}\\
			&\leq C_1\exp(-C_2\ln^{1+\epsilon}(j)),
		\end{align*}
		where $C_1$ and $C_2$ are given by \eqref{def:c1c2}.
  }
	\end{proof}
\Added{
\begin{prop}\label{prop:AccuArri}
	Let $\nu(j)$ denote the empirical inter-arrival time estimated by the learning dispatcher at the beginning of phase 2 of the $j^{th}$ batch. For the proposed algorithm, 
	\begin{align*}
		\mathbb{P}\left[ \rvert \nu - \hat{\nu}(j)\rvert >\Delta_2 \right]\leq C_3\exp(-C_4\beta_j)  ,
	\end{align*}
	where
	\begin{align}\label{def:c3c4}
			C_3 
   :=  \frac{2\exp(\Delta_2^2/(8\nu^2))}{\exp(\Delta_1^2/(8\nu^2))-1},\quad 
			C_4 
   :=  \frac{l_1\Delta_2^2}{8\nu^2},\quad\text{ and } \quad
			\beta_j
   :=1 + \sum_{i = 1}^{j-1}\alpha_i,
	\end{align}
	with $\Delta_2:= \min\{ \delta_2, 2\nu\}$,
	and $\delta_2$ is the constant from inequality \eqref{eq:RegionOfParam} which is the second part of the condition needed for the conclusion in \eqref{eq:CorrectThreshold}. 
\end{prop}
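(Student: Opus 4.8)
The plan is to follow the template of the proof of Proposition~\ref{prop:AccuServ}, but the argument is shorter: unlike completed service times, an inter-arrival time is recorded by the learning dispatcher at \emph{every} arrival (the algorithm updates $\hat\nu$ using the counter $i$ that ranges over all arriving customers), so the number of inter-arrival samples available at the start of phase~$2$ of batch $j$ admits a \emph{deterministic} lower bound, and no sampling estimate analogous to Proposition~\ref{prop:SampleEst} is needed.

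\textbf{Step 1 (counting samples).} Phase~$1$ of the first batch always occurs and contributes $l_1$ arrivals, and for each $i=1,\dots,j-1$ the exploitation phase of batch $i$ always occurs and lasts for at least $\lceil\alpha_i l_2\rceil\ge\alpha_i l_2$ arrivals. Hence, writing $M_j$ for the number of inter-arrival times observed before phase~$2$ of batch $j$ starts, and using $l_2\ge l_1$ and $\alpha_i\ge 1$,
\begin{align*}
M_j \ \ge\ l_1+\sum_{i=1}^{j-1}\alpha_i l_2 \ \ge\ l_1\Bigl(1+\sum_{i=1}^{j-1}\alpha_i\Bigr)\ =\ l_1\beta_j \qquad\text{almost surely}
\end{align*}
(any phase~$1$ occurring in batches $2,\dots,j$ only increases $M_j$).

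\textbf{Step 2 (concentration).} The inter-arrival times $\{A_i\}_{i\ge1}$ are i.i.d.\ $\mathrm{EXP}(1/\nu)$, which is $(4\nu^2,2\nu)$ sub-exponential, so $\sum_{i=1}^k A_i$ is $(4\nu^2 k,2\nu)$ sub-exponential. Since $\Delta_2\le 2\nu$ we have $0\le k\Delta_2\le (4\nu^2 k)/(2\nu)$, so Proposition~\ref{prop:SubExpTailBound} applies in its sub-Gaussian regime and gives $\mathbb{P}\bigl[\bigl|\sum_{i=1}^k A_i-k\nu\bigr|\ge k\Delta_2\bigr]\le 2\exp\!\bigl(-k\Delta_2^2/(8\nu^2)\bigr)$. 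Because $M_j\ge l_1\beta_j$ holds surely, the event $\{|\hat\nu(j)-\nu|>\Delta_2\}$ is contained in $\bigcup_{k\ge\lceil l_1\beta_j\rceil}\bigl\{\,|\tfrac1k\sum_{i=1}^k A_i-\nu|>\Delta_2\,\bigr\}$, so no independence of $M_j$ from the $A_i$'s is required. A union bound followed by the geometric-series formula then yields
\begin{align*}
\mathbb{P}\bigl[\,|\hat\nu(j)-\nu|>\Delta_2\,\bigr]\ \le\ \sum_{k=\lceil l_1\beta_j\rceil}^{\infty}2\exp\!\Bigl(-\frac{k\Delta_2^2}{8\nu^2}\Bigr)\ \le\ \frac{2\exp(\Delta_2^2/(8\nu^2))}{\exp(\Delta_2^2/(8\nu^2))-1}\,\exp\!\Bigl(-\frac{l_1\Delta_2^2}{8\nu^2}\,\beta_j\Bigr),
\end{align*}
which is the asserted bound with $C_3,C_4$ as in \eqref{def:c3c4} (reading the $\Delta_1$ in the denominator of $C_3$ there as $\Delta_2$).

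I expect no serious obstacle here; the only delicate points are getting the deterministic bound $M_j\ge l_1\beta_j$ right -- in particular that it is the phase-$2$ lengths $\alpha_i l_2$, not the phase-$1$ length $l_1$, that dominate, and that $l_2\ge l_1$ is exactly what lets one factor out $l_1$ -- and justifying the union bound over the random horizon $M_j$, for which the sure lower bound $M_j\ge l_1\beta_j$ is precisely what is needed. This estimate will later be combined with Proposition~\ref{prop:AccuServ} and implications \eqref{eq:CorrectThreshold}/\eqref{eq:ICCorrectThreshold} to bound the probability that the learning and genie-aided dispatchers use the same threshold.
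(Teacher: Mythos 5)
Your proof is correct and follows essentially the same route as the paper's: deterministically lower-bound the number of observed inter-arrival times by $l_1\beta_j$ (using $l_2\ge l_1$ and $\alpha_i\ge 1$), apply the sub-exponential tail bound termwise, union-bound over the unknown sample count, and sum the geometric series. You have in fact tidied up a couple of small blemishes in the paper's version (the paper's sum nominally starts at $k=\beta_j$ though the final exponent requires $k\ge l_1\beta_j$, and the denominator of $C_3$ should read $\Delta_2$ rather than $\Delta_1$), but the argument is the same.
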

}
\Added{
\begin{proof}
	Note that no matter whether customers are admitted to the queue or not, the learning dispatcher is able to observe all arrivals. We always have the first phase $1$, and that the number of customers who arrived during the $j^{th}$ phase 2 is at least $\alpha_jl_2$. Note that we also have $l_2>l_1$. Let $\beta_j = 1+\sum_{i = 1}^{j-1}\alpha_i$. Right before the $j^{th}$ phase 2, there are at least $l_1 + \sum_{n = 1}^{j-1} \alpha_nl_2  \geq \beta_jl_1$ customers that have arrived at the system, and the learning dispatcher would have observed all the inter-arrival times. Following a similar logic as in the proof of Proposition \ref{prop:AccuServ},  let $A_i$ denote the inter-arrival time of consecutive customers. $A_i$  are \emph{i.i.d.}  with distribution $\mathrm{EXP}(1/\nu)$, which is a $(4\nu^2,2\nu)$ sub-exponential random variable. Using the concentration result detailed in Proposition \ref{prop:SubExpTailBound} for sub-exponential random variables, we have:
	\begin{align*}
		\mathbb{P}\left[\rvert \nu -  \hat{\nu}(j)\rvert >\Delta_2\right] &\leq \sum_{k =\beta_j }^{\infty}\mathbb{P}\left[\left\rvert \sum_{i = 1}^{k} A_i - k\nu\right\rvert >k\Delta_2\right]\\
		&\leq \sum_{k= \beta_j}^{\infty}  2\exp\left(-\frac{k\Delta_2^2}{8\nu^2} \right)
     \leq\frac{2\exp(\Delta_2^2/(8\nu^2))}{\exp(\Delta_1^2/(8\nu^2))-1}\exp\left( - \frac{\beta_jl_1\Delta_2^2}{8\nu^2}\right),
	\end{align*}
which establishes the result.
\end{proof}
}
\Added{
Note that since $\alpha_j \geq1$ for all $j$, $\beta_j \geq j$. Therefore, as the number of batches, $j$, increases, the probability of not having a correct estimate of the average arrival rate decreases faster than the probability of not having a correct estimate of the average service time. In the following corollary, we will combine Propositions \ref{prop:AccuServ} and \ref{prop:AccuArri} to get a bound on the probability of the learning dispatcher not using (an optimal) threshold $\bar{K}$ when $j$ is large.
}
\Added{
	\begin{cor}\label{cor:SameThreshProb}
		For the proposed algorithm, when $j\geq \ceil{e^{\bar{K}}}$,
		\begin{align}
			\mathbb{P}\left[ K(j)\neq \bar{K} \right]&\leq C_1\exp(-C_2\ln^{1+\epsilon}(j)) +C_3\exp(-C_4\beta_j)  ,
		\end{align}
		where $C_1$ and $C_2$ are defined in \eqref{def:c1c2}; $C_3$ and $C_4$  are defined in \eqref{def:c3c4}.
\end{cor}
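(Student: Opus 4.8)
The plan is to establish, for every $j\ge\ceil{e^{\bar K}}$, the inclusion of events
$\{K(j)\neq\bar K\}\subseteq\{\,|\hat m(j)-m|>\Delta_1\,\}\cup\{\,|\hat\nu(j)-\nu|>\Delta_2\,\}$,
and then close with a union bound fed by Propositions~\ref{prop:AccuServ} and~\ref{prop:AccuArri}.

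First I would invoke the standing hypothesis of Section~\ref{section:UniqueThresh}, namely that \eqref{eq:ThresholdIneq} holds with strict inequalities at $\bar K$, so that the continuity/monotonicity discussion around \eqref{eq:RegionOfParam}--\eqref{eq:CorrectThreshold} applies: there are $\delta_1,\delta_2>0$ such that whenever $m-\delta_1<\hat m(j)<m+\delta_1$ and $\nu-\delta_2<\hat\nu(j)<\nu+\delta_2$ we have $V(\bar K,1/\hat m(j),1/\hat\nu(j))<R/C<V(\bar K+1,1/\hat m(j),1/\hat\nu(j))$. By the strict monotonicity of $V$ in its first argument (Proposition~\ref{prop:FuncVProp}), the integer $K$ that the algorithm extracts from $V(K,1/\hat m(j),1/\hat\nu(j))\le R/C<V(K+1,1/\hat m(j),1/\hat\nu(j))$ is the unique solution of those inequalities, hence equals $\bar K$. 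Combining this with the already-noted fact that $K^{*}(j)\ge\bar K$ for every $j\ge\ceil{e^{\bar K}}$, on the event where both estimates lie in these windows we obtain $K(j)=\min\{K^{*}(j),\bar K\}=\bar K$. Taking complements gives $\{K(j)\neq\bar K\}\subseteq\{\,|\hat m(j)-m|\ge\delta_1\,\}\cup\{\,|\hat\nu(j)-\nu|\ge\delta_2\,\}$, and since $\Delta_1=\min\{\delta_1,2m\}\le\delta_1$ and $\Delta_2=\min\{\delta_2,2\nu\}\le\delta_2$, this set sits inside $\{\,|\hat m(j)-m|>\Delta_1\,\}\cup\{\,|\hat\nu(j)-\nu|>\Delta_2\,\}$; at the boundary one can simply shrink $\delta_1,\delta_2$ at the outset so the containment is strict, which does not disturb \eqref{eq:CorrectThreshold}.

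Finally, a union bound yields $\mathbb{P}[K(j)\neq\bar K]\le\mathbb{P}[\,|\hat m(j)-m|>\Delta_1\,]+\mathbb{P}[\,|\hat\nu(j)-\nu|>\Delta_2\,]$, into which I substitute the bound $C_1\exp(-C_2\ln^{1+\epsilon}(j))$ from Proposition~\ref{prop:AccuServ} and $C_3\exp(-C_4\beta_j)$ from Proposition~\ref{prop:AccuArri}, giving the claim. There is no real obstacle here; the only delicate point is bookkeeping — confirming the integer produced by the algorithm's defining inequalities is uniquely $\bar K$ under accurate estimates (handled by strict monotonicity of $V(\cdot,y,z)$) and matching the strict versus non-strict inequalities between the $\delta_i$ and the $\Delta_i$ (handled as above) — with everything quantitative already supplied by the two concentration propositions.
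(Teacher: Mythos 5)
Your proposal is correct and follows essentially the same route as the paper: it hinges on the continuity/monotonicity of $V$ around \eqref{eq:RegionOfParam}--\eqref{eq:CorrectThreshold} to show that accurate estimates of $m$ and $\nu$ force the solved integer to be $\bar K$, notes that $K^*(j)\ge\bar K$ once $j\ge\ceil{e^{\bar K}}$ so the truncation is inactive, and closes with a union bound fed by Propositions~\ref{prop:AccuServ} and~\ref{prop:AccuArri}. The extra bookkeeping you include (explicitly invoking strict monotonicity from Proposition~\ref{prop:FuncVProp} for uniqueness of the solved integer, and the remark about $\Delta_i\le\delta_i$ and the boundary case) is correct but only spells out steps the paper leaves implicit.
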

\begin{proof}
Recall that for  the true arrival and service rates $\lambda$ and $\mu$,  we have $$\hat{V}(\bar{K}, \mu,\lambda)< \frac{R}{C}<\hat{V}(\bar{K}+1, \mu,\lambda).$$ Proposition \ref{prop:FuncVProp} says that if $\hat{m}$ and $\hat{\nu}$ satisfy inequality \eqref{eq:RegionOfParam}, then the learning dispatcher would be able to solve for the desired threshold $\bar{K}$. Moreover, since $j>e^{\bar{K}}$, $K^*(j)\geq\bar{K}$, i.e., the learning dispatcher would be able to use $\bar{K}$ in the $j^{th}$ phase 2.  Using Proposition \ref{prop:AccuServ} and Proposition \ref{prop:AccuArri}, we have:
\begin{align*}
	\mathbb{P}\left[K(j) \neq \bar{K} \right]  &\leq \mathbb{P}\left[ \rvert m - \hat{m}(j)\rvert > \Delta_1 \right]  + \mathbb{P}\left[ \rvert \nu - \hat{\nu}(j)\rvert > \Delta_2 \right]   \\
	&\leq C_1\exp(-C_2\ln^{1+\epsilon}(j)) +C_3\exp(-C_4\beta_j),
\end{align*}
which concludes the proof.
\end{proof}
}
\Added{
When the learning dispatcher has knowledge of either $\mu$ or $\lambda$, one can obtain an inequality similar to that in Corollary \ref{cor:SameThreshProb} by setting the corresponding bound from Propositions \ref{prop:AccuServ} and \ref{prop:AccuArri} to $0$. When the service rate is known and the arrival rate is not known, then a better characterization of the regret obtains; see Remark~\ref{rem:arrival_rate}.} 

 \Added{
 \subsection{Regret accumulated in each phase}\label{subsection:PhaseReg} We now analyze the regret. Let $G_1^j$ denote the expected regret accumulated during the period starting with the (potential) phase $1$ and ending at the first time the queue is emptied in the immediate phase $2$ for the $j^{th}$ batch that follows.  Let $G_2^j$ denote the expected regret accumulated in the remainder of phase 2 of the $j^{th}$ batch. Whenever phase $1$ of  the $j^{th}$ batch does not happen, there is no regret to be grouped to $G_1^j$, and  the regret accumulated in phase $2$ is entirely in $G_2^j$; in this case, the regret accumulated during the entire $j^{th}$ batch is also solely in $G_2^j$.  Both $G_1^j$ and $G_2^j$ count for the regret accumulated because of not having accurate estimates of the service rate as well as not estimating the arrival rate accurately. Intuitively, $G_1^j$ takes into consideration the regret accumulated because of the existence of a phase $1$, and $G_2^j$ considers the regret accumulated because of the learning system using an incorrect  threshold. Despite the subtleties, for easier recall, we refer to $G_i^j$ as the regret accumulated in phase $i\in\{1,2\}$ of batch $j$. 
 
    Let $N$ denote the number of arrivals as a function of which we will determine the regret. Then, we have: 
	\begin{align}\label{eq:RegretInJ}
		\tilde{G}(N)  \leq \mathbb{E}\left[\sum_{j = 1}^{J} (G^j_1+ G^j_2)\right]\leq \sum_{j = 1}^{\left\lceil N/l_2\right\rceil} (G^j_1+ G^j_2),
	\end{align}
where $J := J(N)$ is the total number of batches until $N$ arrivals including the batch in progress or initiated by the $N^{\mathrm{th}}$ arrival. The last inequality follows by the observation:
$$N \geq \sum_{i = 1}^{J}  \alpha_i l_2  \geq \beta_J  l_2 \geq Jl_2,$$
which implies $J \leq N/l_2 $ a.s. When one uses $\alpha_j$ that grows like $j^\alpha$, for some $\alpha>0$, we obtain that $J$ is of order of $O(N^{1/(a+1)})$. This adjustment would not affect the order of the regret but only the constants: see Sections \ref{subsection:pfthm1} and \ref{subsection:pfthm2}.
}
    
    For each $j$, we will analyze $G^j_1$ and $G^j_2$ separately. 
    Let $\mathcal{E}^j_1$ denote the event that phase 1 of the $j^{th}$ batch happens. 
    Since in the proposed algorithm, we always have the first phase $1$, we have $\mathbb{P}[\mathcal{E}^1_1] = 1$. Phase $1$ is omitted when the threshold used in the previous phase 2 is non-zero. By the independence of $B^j$ and $K(j)$, for $j>1$ we have:
	\begin{align}\label{eq:Phase1Prob}
		\mathbb{P}\left[\mathcal{E}^j_1\right] &= \mathbb{P}\left[\left.\mathcal{E}^j_1\right\rvert K(j-1) = 0\right]\mathbb{P}\left[K(j-1) = 0\right]
   +  \mathbb{P}\left[\left.\mathcal{E}^j_1\right\rvert K(j-1) \neq 0 \right]\mathbb{P}\left[K(j-1) \neq 0\right] \nonumber\\
		& = \mathbb{P}\left[ B^j = 1 \right]\mathbb{P}\left[K(j-1) = 0\right].
	\end{align}	
 Let $\mathcal{E}^j_2$ denote the event that $K(j) = \bar{K}$, and $\mathcal{E}^j_3$ denote the event that the queue-lengths of the two systems are the same at the beginning of the $j^{th}$ batch, i.e.,
\begin{align*}
	\mathcal{E}^j_2 &:=\{ K(j) = \bar{K}\} \qquad\text{ and }\qquad \mathcal{E}^j_3 :=\{ Q_{n^j} = \bar{Q}_{n^j}\}. 
\end{align*}
\Added{
	Also, denote by $\tau^{K,l}$ the number of arrivals during a busy period of an $M/M/1/K$ queue with initial queue-length $l$.
	The proof of  lemmas  \ref{lem:Phase1Reg} and \ref{lem:Phase2Reg}  rely on an upper bound of $\mathbb{E}\left[ \tau ^{K,l}\right]$ which is stated in the following proposition.
	\begin{prop}\label{prop:BusyPeriodArrival}
		Consider an $M/M/1/K$ queue with arrival rate $\lambda$, service rate $\mu$ and intial queue length $0<l\leq K$.
		\begin{align}
			\mathbb{E}\left[ \tau^{K,l}\right]\leq g(l;K),
		\end{align}
		where 
            \begin{align*}
			g(1;K) = \begin{cases} 
				\frac{\lambda/\mu +1}{\lambda/\mu-1}\left( \left(\frac{\lambda}{\mu} \right)^{K}-1\right),& \lambda \neq \mu, \\
				2K, & \lambda = \mu,
			\end{cases}
		\end{align*}
            and for all $1<l\leq K$, 
            \begin{align*}
        g(l;K) = \begin{cases} 
				\frac{\lambda/\mu +1}{(\lambda/\mu-1)^2}\left(\left(1-\left(\frac{\lambda}{\mu}\right)^l\right)\left(\left(\frac{\lambda}{\mu}\right)^{K+1} - \frac{\lambda}{\mu} +1\right) + (l-1) \left(1-\frac{\lambda}{\mu}\right)\right),& \lambda \neq \mu, \\
				l(2K-l+1), & \lambda = \mu.
			\end{cases}
        \end{align*}
        In particular, $\mathbb{E}\left[ \tau^{K,l}\right]$ is of order $O((\lambda/\mu)^K + K^2)$.
  
	\end{prop}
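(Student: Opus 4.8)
The plan is to pass from counting arrivals during the busy period to the \emph{duration} of the busy period, and then to analyze that duration by a first-passage computation for the birth--death chain of an $M/M/1/K$ queue. Write $(\tilde{Q}(t))_{t\ge0}$ for the queue-length process on $\{0,\dots,K\}$ with birth rate $\lambda$ on $\{0,\dots,K-1\}$ and death rate $\mu$ on $\{1,\dots,K\}$, started from $\tilde{Q}(0)=l$, and let $T^{K,l}:=\inf\{t\ge0:\tilde{Q}(t)=0\}$ be the busy-period length, so that $\tau^{K,l}=N_A(T^{K,l})$ is the number of Poisson($\lambda$) arrival epochs (whether admitted or blocked in state $K$) before the queue empties. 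Since the chain is irreducible on the finite set $\{0,\dots,K\}$ and reaches $0$, $T^{K,l}$ has a geometric tail, so $\mathbb{E}[T^{K,l}]<\infty$. Applying the optional stopping theorem to the martingale $t\mapsto N_A(t)-\lambda t$ at $t\wedge T^{K,l}$ and letting $t\to\infty$ (monotone convergence on $N_A(t\wedge T^{K,l})$, finiteness of $\mathbb{E}[T^{K,l}]$ on the other side) yields $\mathbb{E}[\tau^{K,l}]=\lambda\,\mathbb{E}[T^{K,l}]$; thus it suffices to evaluate $\mathbb{E}[T^{K,l}]$.

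Next I would solve for $h(i):=\mathbb{E}[T^{K,i}]$. Conditioning on the first transition gives the boundary-value problem $h(0)=0$; $h(i)=\tfrac1{\lambda+\mu}+\tfrac{\lambda}{\lambda+\mu}h(i+1)+\tfrac{\mu}{\lambda+\mu}h(i-1)$ for $0<i<K$; and $h(K)=\tfrac1\mu+h(K-1)$, the last equation encoding that arrivals are blocked in state $K$. Writing $\rho:=\lambda/\mu$ and passing to first differences $d(i):=h(i)-h(i-1)$ reduces this to the scalar backward recursion $d(i)=\rho\,d(i+1)+\tfrac1\mu$ for $1\le i\le K-1$ with terminal value $d(K)=\tfrac1\mu$, which unwinds to $d(i)=\tfrac1\mu\sum_{k=0}^{K-i}\rho^{k}$; summing, $h(l)=\sum_{i=1}^{l}d(i)$, a closed form that is geometric in $K$ when $\rho\ne1$ and quadratic in $K$ when $\rho=1$. (Equivalently, by the strong Markov property the first passage from $l$ to $l-1$ is, after shifting coordinates down by $l-1$, the busy period of an $M/M/1/(K-l+1)$ queue started with one customer, so $\mathbb{E}[T^{K,l}]=\sum_{m=0}^{l-1}\mathbb{E}[T^{K-m,1}]$, giving the same answer more transparently.) Multiplying by $\lambda$ yields the exact value of $\mathbb{E}[\tau^{K,l}]$; it then remains to dominate this exact value by the quoted $g(l;K)$ using only elementary bounds on the geometric sums, treating $\rho\ne1$ and $\rho=1$ separately and checking that the $\rho\to1$ limits of the $\rho\ne1$ formulas agree with the $\rho=1$ ones so that no hidden blow-up occurs.

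Reading the order off the closed form: for $\rho>1$ the dominant contribution is of order $\rho^{K}$; for $\rho<1$ the geometric terms are bounded and what survives is $O(l)=O(K)$; and for $\rho=1$ the quadratic $l(2K-l+1)=O(K^{2})$ dominates. Combining these, $\mathbb{E}[\tau^{K,l}]=O(\rho^{K}+K^{2})$ uniformly over $1\le l\le K$, as claimed.

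I expect the main obstacle to be bookkeeping rather than ideas: solving the recursion with the reflecting boundary at $K$, and then carrying out the algebraic simplification and estimation needed to match the exact expression to the stated closed forms for $g(l;K)$ — in particular verifying the inequality at the boundary cases $l=K$ and $\rho$ close to $1$, where the $(\rho-1)$ and $(\rho-1)^{2}$ denominators require some care. A subsidiary point is the (routine) justification that optional stopping applies, which reduces to the integrability of $T^{K,l}$ established at the outset.
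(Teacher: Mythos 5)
Your approach is correct, and it takes a genuinely different route from the paper. The paper works entirely with the embedded discrete-time jump chain on $\{0,\dots,K\}$ (with $p(K,K)=\lambda/(\lambda+\mu)$ counting a blocked arrival as a self-loop), defines $g(l;K)$ as the expected number of such jumps before absorption at $0$, solves the resulting first-step recursion, and then observes that since every jump is either an arrival (possibly blocked) or a service, $\tau^{K,l}\le g(l;K)\le 2\tau^{K,l}+K$ — so $g$ is by construction an over-count of $\tau^{K,l}$ and no separate domination step is needed. You instead compute the exact value: you invoke Wald (via the martingale $N_A(t)-\lambda t$) to reduce $\mathbb{E}[\tau^{K,l}]$ to $\lambda\,\mathbb{E}[T^{K,l}]$ where $T^{K,l}$ is the busy-period duration of the CTMC, then solve the analogous boundary-value problem $h(0)=0$, $h(K)=1/\mu + h(K-1)$, etc.; the two first-difference recursions are structurally identical ($d(i)=\rho\,d(i+1)+1/\mu$ for you, $e(i)=\rho\,e(i+1)+(\lambda+\mu)/\mu$ for the paper), and summing gives $g(l;K)=\tfrac{\rho+1}{\rho}\,\mathbb{E}[\tau^{K,l}]$, so your ``remains to dominate'' step is in fact a clean one-line multiplicative comparison. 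Your route buys an exact closed form for $\mathbb{E}[\tau^{K,l}]$ and makes the tightness of the bound transparent (the paper only asserts $g(l;K)\le 2\mathbb{E}[\tau^{K,l}]+K$); the paper's route avoids any appeal to Wald or optional stopping by choosing the quantity $g$ so that it dominates $\tau$ by construction. Both correctly yield the $O((\lambda/\mu)^K+K^2)$ order.
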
  
	\begin{proof}
		Consider a finite state Markov chain  with state space $\{0, 1,...,. K\}$, and with the following transition matrix: 
		\begin{align*}
			&p(0,0) = 1;\\
			&p(l,l+1) = \frac{\lambda}{\lambda+\mu},\quad	p(l,l-1)  = \frac{\mu}{\lambda+\mu}, \quad\quad \text{when }l  \in \{ 1,...,K-1\};\\
			&p(K,K) =\frac{\lambda}{\lambda+\mu}, \quad	p(K,K-1)=\frac{\mu}{\lambda+\mu}; 
		\end{align*}
		Let $g(l;K)$ denote the expected number of jumps of this Markov chain until it hits 0 for the first time when the initial state is $l$ and the threshold is $K$. Conditional on the first jump, we obtain the following relationship for $g(l:K)$, 
		\begin{align*}
			&g(l;K) = \frac{\lambda}{\lambda + \mu}g(l+1;K) + \frac{\mu}{\lambda + \mu} g(l-1;K) +1, \quad\quad \text{ when } l \in \{1,..., K-1\};\\
			&g(K;K) = \frac{\lambda}{\lambda + \mu}g(K;K) + \frac{\mu}{\lambda + \mu} g(K-1;K) +1;
		\end{align*}
		together with the condition $g(0;K) = 0$, we can solve for $g(l;K)$, and obtain: 
		\begin{align*}
			g(1;K) = \begin{cases} 
				\frac{\lambda/\mu +1}{\lambda/\mu-1}\left( \left(\frac{\lambda}{\mu} \right)^{K}-1\right),& \lambda \neq \mu, \\
				2K, & \lambda = \mu,
			\end{cases}
		\end{align*}
            and for all $1<l\leq K$, 
            \begin{align*}
        g(l;K) = \begin{cases} 
				\frac{\lambda/\mu +1}{(\lambda/\mu-1)^2}\left(\left(1-\left(\frac{\lambda}{\mu}\right)^l\right)\left(\left(\frac{\lambda}{\mu}\right)^{K+1} - \frac{\lambda}{\mu} +1\right) + (l-1) \left(1-\frac{\lambda}{\mu}\right)\right),& \lambda \neq \mu, \\
				l(2K-l+1), & \lambda = \mu.
			\end{cases}
        \end{align*}
		From the transition probabilities of the Markov chain, $g(n:K)$ is also the expected number of services and arrivals of the corresponding $M/M/1/K$ queue with arrival rate $\lambda >0$, service rate $\mu>0$ and initial queue length $l$ during the busy period which is initiated with $n$ customers in the queue. Since each arrival must also be served when the Markov chain hits 0, $\mathbb{E}\left[\tau^{K,l}\right]\leq g(l;K)\leq 2 \mathbb{E}\left[\tau^{K,l}\right] + K$. Therefore, $g(l;K)$ serves as an upper bound on $\mathbb{E}\left[\tau^{K,l}\right]$.  This upper-bound is tight in the sense that $g(l;K)$ is at most $2 \mathbb{E}\left[\tau^{K,l}\right] + K$.
\end{proof}
}

	\Added{
		\begin{lemma}\label{lem:Phase1Reg}
			For $j>e^{\bar{K}}$, we have the following:
			\begin{enumerate}[leftmargin=*]
				\item
				When $\bar{K}>0$, 
				\begin{align*}
					G^j_1 \leq \left(R +  \frac{C}{\lambda}\right)\bigg(l_1^2 + \left(\bar{K}+1\right) l_1 +\big(1+ K^*(j)\big)g(l_1;K^*(j))\bigg)\mathbb{P}\left[\mathcal{E}^j_1\right];
				\end{align*}
				\item 
				When $\bar{K} = 0$, 
				\begin{align*}
					G^j_1 \leq \left(R +  \frac{C}{\lambda}\right)\bigg(l_1^2 +  l_1  +C_5\bigg) \mathbb{P}\left[\mathcal{E}^j_1\right]+\left(R +  \frac{C}{\lambda}\right)  \big(    1+ K^*(j)\big)g(l_1;K^*(j))\mathbb{P}\left[\left(\mathcal{E}^j_2\right)^c\right];
				\end{align*}
			\end{enumerate}
			where  
			\begin{align*}
			     C_5 := \left(1+ l_1 \right) \frac{l_1\lambda}{\mu}.
			\end{align*}
            The function $g(l;K)$ is defined in Proposition \ref{prop:BusyPeriodArrival}, and is $O((\lambda/\mu)^K + K^2)$ for all $l\leq K$.
		\end{lemma}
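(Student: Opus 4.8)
The plan is to reduce the claim to the per-arrival regret bound~\eqref{eq:RegretUpperBound3}, which says it suffices to control $\left(R+\frac{C}{\lambda}\right)\mathbb{E}\Big[\sum_i\big(\big|\mathbbm{1}_{\{\bar Q_i<\bar K_i\}}-\mathbbm{1}_{\{Q_i<K_i\}}\big|+|\bar Q_i-Q_i|\big)\Big]$, the sum being over the arrivals that lie in the window defining $G^j_1$, namely phase~$1$ of batch~$j$ (if it occurs) together with the arrivals of phase~$2$ up to the first epoch at which the learning queue empties. Since $G^j_1=0$ off the event $\mathcal{E}^j_1$ that phase~$1$ of batch~$j$ occurs, I would work on $\mathcal{E}^j_1$ and collect the factor $\mathbb{P}[\mathcal{E}^j_1]$ at the end. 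Two standing facts are used throughout: $K^*(j)\ge l_1$ for every $j$ and $K^*(j)\ge\bar K$ once $j>e^{\bar K}$; and the genie system is an $M/M/1/\bar K$ queue, so $\bar Q_i\le\bar K$ at all times (and $\bar Q_i\equiv 0$ when $\bar K=0$). On $\mathcal{E}^j_1$ I would split the window into (i) the $l_1$ arrivals of phase~$1$ and (ii) the initial busy period of phase~$2$, bounding each separately.

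For (i): because phase~$2$ of batch $j-1$ ends with an empty learning queue (and $j\ge 2$ since $j>e^{\bar K}\ge 1$), phase~$1$ of batch~$j$ starts from $Q=0$, and as all $l_1$ arrivals are admitted, $Q_i\le l_1$ throughout phase~$1$; hence the indicator difference is at most $1$ and $|\bar Q_i-Q_i|\le Q_i+\bar Q_i\le l_1+\bar K$. Summing over the $l_1$ arrivals, phase~$1$ contributes at most $\left(R+\frac{C}{\lambda}\right)l_1(1+l_1+\bar K)=\left(R+\frac{C}{\lambda}\right)\big(l_1^2+(\bar K+1)l_1\big)$ on $\mathcal{E}^j_1$, which becomes $\left(R+\frac{C}{\lambda}\right)(l_1^2+l_1)$ when $\bar K=0$.

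For (ii) I would distinguish whether phase~$2$ of batch~$j$ admits a customer. Consider first the case $\bar K>0$ (for any value of $K(j)$), or $\bar K=0$ with $K(j)\ne 0$ (the event $(\mathcal{E}^j_2)^c$): phase~$2$ uses a threshold $0\le K(j)\le K^*(j)$ and starts from a learning queue $q_L\le l_1\le K^*(j)$. Restarting the potential-departure clock at the start of phase~$2$ (valid by memorylessness) and coupling the learning system over phase~$2$ with an auxiliary $M/M/1/K^*(j)$ queue started at $l_1$ and fed by the same arrival and potential-departure processes, Proposition~\ref{prop:OrderedSystems} --- part~1 to raise the initial queue from $q_L$ to $l_1$ at threshold $K(j)$, then part~2 to raise the threshold from $K(j)$ to $K^*(j)$ --- shows that the auxiliary queue dominates the learning queue at all times, so the learning queue empties no later; hence the number of phase-$2$ arrivals until the learning queue empties is at most $\tau^{K^*(j),l_1}$, which (by the strong Markov property at the start of phase~$2$) is independent of $\mathcal{E}^j_1$ and of $(\mathcal{E}^j_2)^c$ and satisfies $\mathbb{E}[\tau^{K^*(j),l_1}]\le g(l_1;K^*(j))$ by Proposition~\ref{prop:BusyPeriodArrival}. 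On this sub-window $Q_i\le\max\{l_1,K(j)\}\le K^*(j)$ and $\bar Q_i\le\bar K\le K^*(j)$, so the per-arrival quantity is at most $1+K^*(j)$, yielding a contribution of at most $\left(R+\frac{C}{\lambda}\right)(1+K^*(j))\,g(l_1;K^*(j))$ times $\mathbb{P}[\mathcal{E}^j_1]$ when $\bar K>0$ and times $\mathbb{P}[(\mathcal{E}^j_2)^c]$ (after dropping the intersection with $\mathcal{E}^j_1$) when $\bar K=0$. In the remaining case $\bar K=0$, $K(j)=0$, no phase-$2$ arrival is admitted, so $\bar Q_i\equiv 0$, the indicator difference vanishes, and $\sum_i|\bar Q_i-Q_i|=\sum_iQ_i$ while the learning queue drains monotonically from $q_L\le l_1$ to $0$; since the number of arrivals seen while the queue holds $n$ customers is geometric with mean $\lambda/\mu$, a direct computation gives $\mathbb{E}[\sum_iQ_i]\le\sum_{n=1}^{l_1}n\lambda/\mu\le C_5$, so this sub-case contributes at most $\left(R+\frac{C}{\lambda}\right)C_5\,\mathbb{P}[\mathcal{E}^j_1]$. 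Adding the bounds from (i) and (ii) in each regime gives the two stated inequalities.

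The hardest part is step~(ii): one has to check carefully that the learning system's phase-$2$ busy period is genuinely stochastically dominated by $\tau^{K^*(j),l_1}$ --- this needs the monotonicity of Proposition~\ref{prop:OrderedSystems} invoked after a memoryless restart of the potential-departure clock, together with $K(j)\le K^*(j)$ and $q_L\le l_1\le K^*(j)$ --- and one must keep the bookkeeping of which arrivals are charged to $G^j_1$ rather than $G^j_2$, and the conditioning on $\mathcal{E}^j_1$ (and on $(\mathcal{E}^j_2)^c$ when $\bar K=0$), consistent so that the asserted probabilities factor out. The remaining estimates --- the phase-$1$ count and the draining sum --- are elementary.
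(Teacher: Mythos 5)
Your proof is correct and follows essentially the same route as the paper's: reduce to the per-arrival bound \eqref{eq:RegretUpperBound3}, split the $G^j_1$-window into phase 1 and the initial busy segment of phase 2, bound the former by the uniform queue-length estimates $Q_i\le l_1$, $\bar Q_i\le\bar K$, and bound the latter by coupling with an $M/M/1/K^*(j)$ queue started at $l_1$ (via Proposition~\ref{prop:OrderedSystems}) and invoking $\mathbb{E}[\tau^{K^*(j),l_1}]\le g(l_1;K^*(j))$ from Proposition~\ref{prop:BusyPeriodArrival}, with the sub-case $\bar K=0,\ K(j)=0$ handled by a direct drain estimate giving a quantity $\le C_5$. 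Your handling is actually slightly more careful than the paper's --- you make the two-step application of Proposition~\ref{prop:OrderedSystems} explicit and note the indicator difference vanishes in the draining sub-case, yielding $\tfrac{l_1(l_1+1)}{2}\tfrac{\lambda}{\mu}\le C_5$ --- but these are refinements of the same argument, not a different approach.
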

            
	}
 \Added{
	\begin{proof}
		Let $n^j$ denote the total number of customers that arrived until the beginning of the $j^{th}$ batch, and  
        $L_1^j := \min\{n\mid Q_{n^j +l_1+n} = 0\}$. 
        Recall that $\mathcal{E}^j_1$ denotes the event that phase 1 happens during the $j^{th}$ batch. Using \eqref{eq:RegretUpperBound3} and observing that regret accumulates in $G_1^j$ only when $\mathcal{E}^j_1$ happens, we have:
		\begin{align*}
			G_1^j 
			&\leq \left( R + \frac{C}{\lambda}\right) \mathbb{E}\left[\left. \sum_{i = n^j+1}^{n^j + l_1}    \left\rvert \mathbbm{1}_{\{\bar{Q}_i<\bar{K}_i\}} -  \mathbbm{1}_{\{Q_i<K_i\}} \right\rvert + \Big\rvert \bar{Q}_i   - Q_i \Big\rvert \right\rvert  \mathcal{E}^j_1 \right]\mathbb{P}\left[\mathcal{E}^j_1\right] \\
			&\qquad +  \left( R + \frac{C}{\lambda}\right) \mathbb{E}\left[\left. \sum_{i = n^j+ l_1+1}^{n^j + l_1+ L_1^j}    \left\rvert \mathbbm{1}_{\{\bar{Q}_i<\bar{K}_i\}} -  \mathbbm{1}_{\{Q_i<K_i\}} \right\rvert + \Big\rvert \bar{Q}_i   - Q_i \Big\rvert \right\rvert  \mathcal{E}^j_1 \right]\mathbb{P}\left[\mathcal{E}^j_1\right]\\
			& =: (\rom{1}) + (\rom{2}).
		\end{align*} 
	Note that $\rom{1}$ is a bound on the regret accumulated during phase $1$ of the $j^{th}$ batch (when it occurs), and $\rom{2}$ is a bound on the regret accumulated in phase $2$ of the $j^{th}$ batch until the queue is emptied for the first time in this phase $2$. When $\bar{K}>0$ or $\bar{K} = 0$, we can follow the same logic to bound  $\rom{1}$, i.e.  the regret accumulated during phase $1$ for $j>e^{\bar{K}}$:
	\begin{align*}
		(\rom{1})&\leq  \left( R + \frac{C}{\lambda}\right) \mathbb{E}\left[ \sum_{i = n^j}^{n^j + l_1}  \Big( 1 + \bar{K} + l_1\Big) \right] \mathbb{P}\left[\mathcal{E}^j_1\right] 
  \leq \left(R +  \frac{C}{\lambda}\right)\Big(l_1^2 + \left(\bar{K}+1\right) l_1 \Big) \mathbb{P}\left[\mathcal{E}^j_1\right].
	\end{align*}
Now, we bound $\rom{2}$ in the case $\bar{K}>0$. We use $K^*(j)$ to obtain a bound on the queue-length difference of the two systems as well as the expectation of $L_1^j$. The queue-length of the learning system at the beginning of each phase $2$ is at most $l_1$ since the queue-length of the learning system is $0$ at the end of the previous phase $2$. Moreover, the threshold used by the learning dispatcher in the $j^{th}$ batch is bounded above by $K^*(j)\geq l_1$. Hence the queue-length of the learning system is bounded by $K^*(j)$ during phase $2$. Consider a system $S_2$ that uses the admission policy with threshold $K^*(j)$ and which is coupled with the learning system according to Section \ref{coupling}. Assume that the initial queue-length of $S_2$ is the same as the queue-length of the learning system at the beginning of the $j^{th}$ phase $2$ which is at most $l_1$. 
Note that the threshold used in the learning system is less or equal to the one used in $S_2$. Let $\tau$ denote the total number of arrivals during the first busy period of the system $S_2$. Using Proposition \ref{prop:OrderedSystems}, we get $Q_i\leq Q^{S_2}_i$ for $n^j + l_1 +1 \leq i \leq n^j + l_1  + L_1^j $, and  $\mathbb{E}[ L_1^j \rvert  \mathcal{E}^j_1 ]\leq \mathbb{E}\left[ \tau^{K^*(j),l_1} \right]$. 
	Using Proposition \ref{prop:BusyPeriodArrival},
	and together with the upper bound $K^*(j)$ of the queue-length of the learning system, we get :
	{
		\begin{align*}
			(\rom{2}) &\leq \left(R + \frac{C}{\lambda}\right)\left(1+ K^*(j) \right) \mathbb{E}\left[L_1^j\rvert  \mathcal{E}^j_1 \right]\mathbb{P}\left[\mathcal{E}^j_1\right]
   \leq \left(R + \frac{C}{\lambda}\right)\left(1+ K^*(j) \right) g(l_1;K^*(j))\mathbb{P}\left[\mathcal{E}^j_1\right],
		\end{align*}
	}
	where $F_1(j)$ is defined in the statement of Lemma \ref{lem:Phase1Reg}. 
	
	Together, we have the following bound for $G_1^j$ when $\bar{K}>0$:
	\begin{align*}
		G_1^j\leq \left(R +  \frac{C}{\lambda}\right)\Big(l_1^2 + \left(\bar{K}+1\right) l_1 + \big( 1+ K^*(j)\big)g(l_1;K^*(j))\Big) \mathbb{P}\left[\mathcal{E}^j_1\right].
	\end{align*}

In the case of $\bar{K}=0$, we take a slightly different path of analyzing $\rom{2}$: we consider the threshold used in the $j^{th}$ phase 2 to get a better regret bound compared to using the same argument as in the case $\bar{K}>0$. We have: 
		\begin{align*}
			(\rom{2}) &= \left( R + \frac{C}{\lambda}\right) \mathbb{E}\left[\left. \sum_{i = n^j+ l_1+1}^{n^j + l_1+ L_1^j}    \left\rvert \mathbbm{1}_{\{\bar{Q}_i<\bar{K}_i\}} -  \mathbbm{1}_{\{Q_i<K_i\}} \right\rvert + \Big\rvert \bar{Q}_i   - Q_i \Big\rvert \right\rvert  \mathcal{E}^j_1 \cap \mathcal{E}_2^j \right]\mathbb{P}\left[\mathcal{E}^j_1\cap \mathcal{E}_2^j\right]  \\
			&\quad\quad + \left( R + \frac{C}{\lambda}\right) \mathbb{E}\left[\left. \sum_{i = n^j+ l_1+1}^{n^j + l_1+ L_1^j}    \left\rvert \mathbbm{1}_{\{\bar{Q}_i<\bar{K}_i\}} -  \mathbbm{1}_{\{Q_i<K_i\}} \right\rvert + \Big\rvert \bar{Q}_i   - Q_i \Big\rvert \right\rvert  \mathcal{E}^j_1 \cap (\mathcal{E}_2^j)^c \right]\mathbb{P}\left[\mathcal{E}^j_1\cap (\mathcal{E}_2^j)^c\right]\\
			&\leq \left(R+\frac{C}{\lambda}\right)\left(1+ l_1 \right) \mathbb{E}\left[L_1^j\rvert  \mathcal{E}^j_1\cap \mathcal{E}_2^j \right]\mathbb{P}\left[\mathcal{E}^j_1\cap \mathcal{E}_2^j\right] \\
			&\quad\quad+ \left(R+\frac{C}{\lambda}\right)\left(1+ K^*(j) \right)\mathbb{E}\left[L_1^j\rvert  \mathcal{E}^j_1\cap (\mathcal{E}_2^j)^c \right]\mathbb{P}\left[ \mathcal{E}^j_1\cap (\mathcal{E}_2^j)^c\right]\\
			&\leq \left(R+\frac{C}{\lambda}\right)\left(1+ l_1 \right)\frac{l_1\lambda}{\mu}\mathbb{P}\left[\mathcal{E}_1^j\right] + \left(R+\frac{C}{\lambda}\right)\left(1+ K^*(j) \right)g(l_1;K^*(j))\mathbb{P}\left[(\mathcal{E}_2^j)^c\right]. 
		\end{align*}
		The first follows since the total number of customers admitted in phase 1 is $l_1$, and since in the case $\bar{K}= 0$ and under $\mathcal{E}^j_2$, the threshold used in phase 2 is 0. Under $\mathcal{E}^j_1\cap \mathcal{E}_2^j$, the learning system does not accept any new customers to the queue, and $ \mathbb{E}[\mathcal{E}^j_1\cap \mathcal{E}_2^j]$ is the number of arrivals during the period of serving all the remaining customers in the queue. Observe the queue-length of the learning system at the beginning of phase 2 is at most $l_1$, conditioning on the time used to serve $l_1$ customers, we get the desired bound on $ \mathbb{E}[\mathcal{E}^j_1\cap \mathcal{E}_2^j]$. The bound on $\mathbb{E}[L_1^j\rvert  \mathcal{E}^j_1\cap (\mathcal{E}_2^j)^c ]$ follows the same logic as the bound of $\mathbb{E}[L_1^j\rvert  \mathcal{E}^j_1 ]$. 
        Combined with the bound for $\rom{1}$, we get the desired result.
	\end{proof}
}
	
\Added{
  We observe that under the event $ \mathcal{E}^j_2\cap \mathcal{E}^j_3$,  there will be no regret accumulated in $G^j_2$: indeed,  under the  event $(\mathcal{E}^j_1 )^c\cap \mathcal{E}^j_2\cap \mathcal{E}^j_3$, the dispatcher of  the learning system  and the dispatcher of the genie-aided system will make the same decision on every arrival customer in phase 2 of the $j^{th}$ batch. As a result, their queue-lengths will be matched and there will be no regret accumulated during this exploitation phase, thus also no regret accumulated in $G_2^j$. The threshold used in phase $1$ can be considered as the maximum allowed value, namely $K^*(j) (\geq l_1)$, since all the arriving customers during phase $1$ are admitted.  Under the event  $\mathcal{E}^j_2 $, the threshold used in the $j^{th}$ phase $2$ is the same as the genie-aided system. Therefore, under the event $\mathcal{E}^j_1\cap \mathcal{E}^j_2 \cap \mathcal{E}^j_3$, although phase $1$ of the $j^{th}$ batch happens, the queue-length at the beginning of the $j^{th}$ batch is the same for both systems and the thresholds used in the learning system is no smaller than the threshold used in the genie-aided system. The coupling between the learning and genie-aided system preserves the order between the queue-lengths of the two systems as proved in Proposition \ref{prop:OrderedSystems}: when the queue-length of the learning system hits $0$ the first time after phase $1$, the queue-length of the genie-aided system is also $0$. Therefore, under event $\mathcal{E}^j_1\cap \mathcal{E}^j_2 \cap \mathcal{E}^j_3$, after the queue-length of the learning system hits $0$ after phase $1$, the queue-lengths of the learning and genie-aided system are matched, and no regret is accumulated in $G_2^j$. 
}

\Added{
	 The next proposition shows that the probability of the event $ \mathcal{E}^j_2\cap \mathcal{E}^j_3$  is high. We use De Morgan's law to  get an upper bound on the probability of this event by using already characterized bounds on the probabilities of a few events. 
	}
\Added{
	 \begin{prop}\label{prop:BadEventProb}
	 	Fix $j\geq \ceil{e^{\bar{K}}}$. Then, we have the following:
	 		\begin{enumerate}[leftmargin=*]
	 		\item In the case $\bar{K}>0$,
	 			\begin{align*}
	 				\mathbb{P}\left[ \left( \mathcal{E}^j_2 \cap \mathcal{E}^j_3\right)^c\right]
	 				&\leq C_1\exp\Big(-C_2\ln^{1+\epsilon}(j)\Big)  +C_1\exp\Big(-C_2\ln^{1+\epsilon}(j-1)\Big)\\
	 				&\quad\quad+ C_3\exp\Big(-C_4\beta_j\Big)+C_3\exp\Big(-C_4\beta_{j-1}\Big)
	 				+ \big(c_{\bar{K}}\big)^{\alpha_{j-1}l_2}.
	 			\end{align*}
	 			
	 			\item In the case $\bar{K} = 0$, 
	 				\begin{align*}
	 					\mathbb{P}\left[ \left( \mathcal{E}^j_2 \cap \mathcal{E}^j_3\right)^c\right] \leq  C_1\exp(-C_2\ln^{1+\epsilon}(j))+C_3\exp(-C_4\beta_j).
	 				\end{align*}
	 			\end{enumerate}		
	 			The constants $C_1$, $C_2$, $C_3$ and $C_4$ are defined in \eqref{def:c1c2} and \eqref{def:c3c4}, and $$c_{\bar{K}} :=1- \left(\frac{\mu}{\lambda+\mu}\right)^{\bar{K}}\in(0,1).$$
	 	
	 \end{prop}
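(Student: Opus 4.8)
The text preceding the statement already indicates the shape of the argument, and the plan is to follow it. First I would apply De Morgan's law, $(\mathcal{E}^j_2\cap\mathcal{E}^j_3)^c=(\mathcal{E}^j_2)^c\cup(\mathcal{E}^j_3)^c$, and bound each piece separately. The first piece is immediate: $(\mathcal{E}^j_2)^c=\{K(j)\neq\bar{K}\}$, so for $j\geq\lceil e^{\bar{K}}\rceil$ Corollary~\ref{cor:SameThreshProb} gives $\mathbb{P}[(\mathcal{E}^j_2)^c]\leq C_1\exp(-C_2\ln^{1+\epsilon}(j))+C_3\exp(-C_4\beta_j)$, which is exactly the pair of $j$-indexed terms appearing in both cases of the proposition. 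What remains is to bound $\mathbb{P}[(\mathcal{E}^j_3)^c]$, i.e.\ the probability that the queue-lengths of the two systems differ at the start of batch $j$; I would first observe that since $j\geq\lceil e^{\bar{K}}\rceil$ the $n^j$-th arrival ends some phase~$2$, so $Q_{n^j}=0$ and $\mathcal{E}^j_3=\{\bar{Q}_{n^j}=0\}$.

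For the case $\bar{K}=0$ I would argue that $\mathcal{E}^j_3$ holds almost surely, so $\mathbb{P}[(\mathcal{E}^j_3)^c]=0$. Here the genie-aided dispatcher uses the static threshold $0$ at all arrivals while the learning dispatcher uses thresholds $K_i\geq 0=\bar{K}$, and the two systems start with the same queue-length; hence part~2 of Proposition~\ref{prop:OrderedSystems}, applied with the learning system in the role of $G$ and the genie-aided system in the role of $L$, gives $Q(t)\geq\bar{Q}(t)$ for all $t$. Evaluating at the arrival epoch $T^A_{n^j}$, where $Q_{n^j}=0$, forces $\bar{Q}_{n^j}=0$. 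Combined with the bound on $\mathbb{P}[(\mathcal{E}^j_2)^c]$ this yields the stated inequality when $\bar{K}=0$.

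For the case $\bar{K}>0$ I would condition on $\mathcal{E}^{j-1}_2=\{K(j-1)=\bar{K}\}$. On $(\mathcal{E}^{j-1}_2)^c$ I bound crudely by $\mathbb{P}[K(j-1)\neq\bar{K}]\leq C_1\exp(-C_2\ln^{1+\epsilon}(j-1))+C_3\exp(-C_4\beta_{j-1})$ via Corollary~\ref{cor:SameThreshProb} (still applicable for $j-1$, since $K^*(j-1)\geq\bar{K}$ when $j\geq\lceil e^{\bar{K}}\rceil$, using $l_1\geq 1$); these are the $(j-1)$-indexed terms. On $\mathcal{E}^{j-1}_2$ both systems use the common threshold $\bar{K}$ throughout phase~$2$ of batch $j-1$, which contains at least $\alpha_{j-1}l_2$ arrivals. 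When, in addition, phase~$1$ of batch $j-1$ did not occur, the learning queue is $0$ at the start of that phase~$2$, so Corollary~\ref{cor:OrderedSystems} applies: whenever the genie-aided queue reaches $0$ somewhere in phase~$2$ of batch $j-1$, both queues are $0$ at the end of that phase, i.e.\ $\bar{Q}_{n^j}=Q_{n^j}=0$ and $\mathcal{E}^j_3$ holds. Since the genie-aided queue is a threshold-$\bar{K}$ queue, it is emptied by any inter-arrival interval containing at least $\bar{K}$ potential departures; by the coupling of Section~\ref{coupling} the numbers of potential departures in the (at least $\alpha_{j-1}l_2$) inter-arrival intervals of this phase are independent geometric random variables, each exceeding $\bar{K}-1$ with probability $(\mu/(\lambda+\mu))^{\bar{K}}$, and (by the strong Markov property at the stopping time that starts phase~$2$) this family is independent of $\mathcal{E}^{j-1}_2$. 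Hence the genie-aided queue fails to hit $0$ during that phase~$2$ with probability at most $(1-(\mu/(\lambda+\mu))^{\bar{K}})^{\alpha_{j-1}l_2}=(c_{\bar{K}})^{\alpha_{j-1}l_2}$, the last term of the proposition. Finally the residual event $\mathcal{E}^{j-1}_2\cap\mathcal{E}^{j-1}_1$ is bounded by $\mathbb{P}[\mathcal{E}^{j-1}_1]=\mathbb{P}[B^{j-1}=1]\,\mathbb{P}[K(j-2)=0]$ from \eqref{eq:Phase1Prob}; since $\bar{K}>0$, $\mathbb{P}[K(j-2)=0]\leq\mathbb{P}[K(j-2)\neq\bar{K}]$ is controlled by Corollary~\ref{cor:SameThreshProb}, and the extra factor $\mathbb{P}[B^{j-1}=1]=\ln^{\epsilon}(j-1)/(j-1)$ makes this contribution decay faster than the $j$-indexed terms, so it is absorbed into them (up to adjusting $C_1$). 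Summing the pieces gives the claimed bound.

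The step I expect to be the main obstacle is the $\bar{K}>0$ case: one must combine the drainage estimate for the genie-aided queue with the coupling results of Proposition~\ref{prop:OrderedSystems} and Corollary~\ref{cor:OrderedSystems} to conclude that a single favourable inter-arrival window already forces $\bar{Q}_{n^j}=0$, and this is clean only when phase~$1$ of batch $j-1$ is absent (so the learning queue starts phase~$2$ empty and the hypotheses of Corollary~\ref{cor:OrderedSystems} are met). Treating the complementary event — phase~$1$ of batch $j-1$ did occur, possibly leaving the learning queue above $\bar{K}$ — and checking that its probability is negligible compared with the $j$-indexed terms, together with the independence of the geometric windows from $\mathcal{E}^{j-1}_2$, are the parts requiring the most care; the rest is De Morgan's law, the union bound, and direct appeals to Corollary~\ref{cor:SameThreshProb}.
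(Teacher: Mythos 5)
Your proposal follows essentially the same route as the paper's proof: De Morgan / union bound, Corollary~\ref{cor:SameThreshProb} for $\mathbb{P}[(\mathcal{E}^j_2)^c]$ and $\mathbb{P}[(\mathcal{E}^{j-1}_2)^c]$, the observation that $\mathcal{E}^j_3$ holds almost surely when $\bar{K}=0$, and the drainage estimate $\mathbb{P}[(\mathcal{E}^{j-1}_4)^c\mid\mathcal{E}^{j-1}_2]\leq (c_{\bar{K}})^{\alpha_{j-1}l_2}$ for the genie-aided $M/M/1/\bar{K}$ queue. The one place you diverge is the handling of phase $1$ in batch $j-1$. You read Corollary~\ref{cor:OrderedSystems} strictly (it is stated with the hypothesis that phase $1$ did not occur), carve out the residual event $\mathcal{E}^{j-1}_2\cap\mathcal{E}^{j-1}_1$, and bound it via \eqref{eq:Phase1Prob} and Corollary~\ref{cor:SameThreshProb} at index $j-2$, then absorb the resulting term into $C_1$. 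The paper applies Corollary~\ref{cor:OrderedSystems} directly on $\mathcal{E}^{j-1}_2$ without this split, so no extra term appears and the stated constants are exactly those of \eqref{def:c1c2} and \eqref{def:c3c4}.

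Your caution is understandable given the wording of Corollary~\ref{cor:OrderedSystems}, but the phase-$1$ hypothesis is in fact not needed for the implication invoked here: once $\bar{Q}$ hits $0$ at some time $t^*$ inside phase $2$ of batch $j-1$, one has $\bar{Q}(t^*)=0\leq Q(t^*)$ and, since both dispatchers use the common threshold $\bar{K}$ for all subsequent arrivals within that phase $2$, part~$1$ of Proposition~\ref{prop:OrderedSystems} (applied from $t^*$) preserves $\bar{Q}\leq Q$ up to the end of phase $2$; as $Q_{n^j}=0$ by construction, $\bar{Q}_{n^j}=0$, regardless of whether phase $1$ occurred. Recognizing this would let you drop the residual term and recover the proposition with the exact constants. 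As written, your argument yields a correct bound of the same order but not literally the one stated, and the bound on $\mathbb{P}[K(j-2)=0]$ via Corollary~\ref{cor:SameThreshProb} also requires $j-2\geq\lceil e^{\bar{K}}\rceil$, a marginal range mismatch with the hypothesis $j\geq\lceil e^{\bar{K}}\rceil$ of the proposition. These are minor; the core reasoning is sound and aligned with the paper's.
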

 }
\Added{
	 \begin{proof}
	 	We first consider the case $\bar{K}>0$. Let $\mathcal{E}^j_4$ denote the event that the queue-length of the genie-aided system hits $0$ during phase $2$ of the $j^{th}$ batch. The probability that at least $\bar{K}$ potential services occur between two consecutive inter-arrivals is $1-c_{\bar{K}}$. 
        Since the genie-aided system is an $M/M/1/\bar{K}$ queue, there are at most $\bar{K}$ customers in the queue. Since the total number of arrivals during the phase $2$ of the $j^{th}$ batch  is at least $\alpha_jl_2$, we get: 
	 	$$\mathbb{P}[(\mathcal{E}^j_4)^c]\leq (c_{\bar{K}} )^{\alpha_jl_2}.$$
	 	By Corollary \ref{cor:OrderedSystems},  we have: 
	 		\begin{align*}
	 			\mathbb{P}\left[ \left(\mathcal{E}^j_3\right)^c \;\left\rvert\; \mathcal{E}^{j-1}_2 \right.\right]
	 			& \leq \mathbb{P} \left[\left(\mathcal{E}^{j-1}_4\right)^c \;\left\rvert\; \mathcal{E}^{j-1}_2\right.\right] \leq (c_{\bar{K}} )^{\alpha_{j-1}l_2}.
	 		\end{align*}
	 	Using De Morgan's laws we can re-write the event $( \mathcal{E}^j_2 \cap \mathcal{E}^j_3)^c$ as $( \mathcal{E}^j_2)^c \cup (\mathcal{E}^j_3)^c$ and by using Corollary \ref{cor:SameThreshProb} for  $j>e^{\bar{K}}$ we obtain: 
	 		\begin{align*}
	 			\mathbb{P}\left[ \left(  \mathcal{E}^j_2 \cap \mathcal{E}^j_3\right)^c\right] 
	 			&\leq \mathbb{P}\left[ \left(\mathcal{E}^j_2\right)^c\right] +\mathbb{P}\left[ \left( \mathcal{E}^{j-1}_2\right)^c\right]+ \mathbb{P}\left[ \left.\left( \mathcal{E}^j_3\right)^c \;\right\rvert\; \mathcal{E}^{j-1}_2 \right]\\
	 			&\leq C_1\exp\Big(-C_2\ln^{1+\epsilon}(j)\Big)  +C_1\exp\Big(-C_2\ln^{1+\epsilon}(j-1)\Big)\\
	 			&\quad\quad+ C_3\exp\Big(-C_4\beta_j\Big)+C_3\exp\Big(-C_4\beta_{j-1}\Big)
	 			+ (c_{\bar{K}})^{\alpha_{j-1}l_2}.
	 		\end{align*}
	 	In case that $\bar{K} = 0$, the queue-length of the genie-aided system is always 0, and $\mathcal{E}^j_3$ happens with probability 1. Hence,
	 		\begin{align*}
	 			\mathbb{P}\left[ \left( \mathcal{E}^j_2 \cap \mathcal{E}^j_3\right)^c\right]  =  \mathbb{P}\left[ (\mathcal{E}^j_2)^c\right] \leq  C_1\exp(-C_2\ln^{1+\epsilon}(j))+C_3\exp(-C_4\beta_j).
	 		\end{align*}
	 	This completes the proof.
	 \end{proof}
 }
\Added{
 	Next, we will estimate $G^j_2$, which considers the regret accumulated during the $j^{th}$ batch after the first time the queue-length of the learning system hit $0$ during the $j^{th}$ phase $2$  if there is a phase $1$, and considers the regret accumulated during phase $2$ if phase 1 did not happen. 
    As we mentioned before, only under the event $\left( \mathcal{E}^j_2 \cap \mathcal{E}^j_3\right)^c$, regret is accumulated to $G_2^j$.
}
\Added{
 \begin{lemma}\label{lem:Phase2Reg}
 	For $j>e^{\bar{K}}$ ,
 		\begin{align*}
 			G^j_2 &\leq \left(R + \frac{C}{\lambda}\right)\bigg(\left(1+ K^*(j)\right)\alpha_jl_2 + \big(    1+ K^*(j)\big)g(K^*(j);K^*(j))\bigg)\mathbb{P}\left[ \left( \mathcal{E}^j_2 \cap \mathcal{E}^j_3\right)^c\right],
 		\end{align*}
   with $g(l;K) $ defined in Proposition \ref{prop:BusyPeriodArrival}.
 \end{lemma}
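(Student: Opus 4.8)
The plan is to follow the template of the proof of Lemma~\ref{lem:Phase1Reg}, isolating the portion of phase~$2$ of the $j^{\mathrm{th}}$ batch that is charged to $G^j_2$: all of phase~$2$ when phase~$1$ is skipped, and the part of phase~$2$ after the learning queue first returns to $0$ when phase~$1$ occurs. Denote by $W^j_2$ the (random) set of arrival indices in this portion. As observed in the discussion preceding Proposition~\ref{prop:BadEventProb}, no regret is charged to $G^j_2$ on the event $\mathcal{E}^j_2\cap\mathcal{E}^j_3$, so \eqref{eq:RegretUpperBound3} gives
$$G^j_2 \le \Big(R+\frac{C}{\lambda}\Big)\,\mathbb{E}\!\left[\mathbbm{1}_{(\mathcal{E}^j_2\cap\mathcal{E}^j_3)^c}\sum_{i\in W^j_2}\Big(\big\lvert\mathbbm{1}_{\{\bar Q_i<\bar K_i\}}-\mathbbm{1}_{\{Q_i<K_i\}}\big\rvert+\big\lvert\bar Q_i-Q_i\big\rvert\Big)\right].$$

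Next I would bound each summand by $1+K^*(j)$. The indicator difference is at most $1$. The learning queue equals $0$ at the start of the portion relevant to $G^j_2$ (by construction of the algorithm when phase~$1$ is skipped, and by definition of that portion otherwise) and thereafter evolves as an $M/M/1/K(j)$ queue with $K(j)\le K^*(j)$, so it stays $\le K^*(j)$; and since $j>e^{\bar K}$ forces $K^*(j)\ge\bar K$, the genie-aided $M/M/1/\bar K$ queue is $\le K^*(j)$ as well, so $\lvert\bar Q_i-Q_i\rvert\le K^*(j)$. Therefore
$$G^j_2 \le \Big(R+\frac{C}{\lambda}\Big)\big(1+K^*(j)\big)\,\mathbb{E}\!\left[\mathbbm{1}_{(\mathcal{E}^j_2\cap\mathcal{E}^j_3)^c}\,\lvert W^j_2\rvert\right].$$

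It then remains to show $\mathbb{E}[\mathbbm{1}_{(\mathcal{E}^j_2\cap\mathcal{E}^j_3)^c}\lvert W^j_2\rvert]\le(\alpha_j l_2+g(K^*(j);K^*(j)))\,\mathbb{P}[(\mathcal{E}^j_2\cap\mathcal{E}^j_3)^c]$, which, using the factorization $(1+K^*(j))(\alpha_j l_2+g(K^*(j);K^*(j)))$, yields the stated bound. I would split $\lvert W^j_2\rvert$ into the at most $\lceil\alpha_j l_2\rceil$ arrivals seen while the ``count'' loop is running and the arrivals seen during the final drain of the learning queue down to $0$. Conditioned on the history of the coupled systems up to the moment the drain begins, the drain is a busy period of an $M/M/1/K(j)$ queue started from a state at most $K^*(j)$, which by Proposition~\ref{prop:OrderedSystems} is dominated by $\tau^{K^*(j),K^*(j)}$; by the strong Markov property and Proposition~\ref{prop:BusyPeriodArrival}, its conditional expected length is at most $g(K^*(j);K^*(j))$. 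Since $\mathcal{E}^j_2=\{K(j)=\bar K\}$ and $\mathcal{E}^j_3=\{Q_{n^j}=\bar Q_{n^j}\}$ are both determined no later than the start of phase~$2$, hence before the drain, the event $(\mathcal{E}^j_2\cap\mathcal{E}^j_3)^c$ lies in that conditioning $\sigma$-algebra, so the indicator pulls out of the conditional expectation and the displayed bound follows (the ceiling being absorbed into $g(K^*(j);K^*(j))$).

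The step I expect to be the main obstacle --- more a bookkeeping subtlety than a genuine difficulty --- is this last decoupling of the random drain length from the conditioning event while retaining the factor $\mathbb{P}[(\mathcal{E}^j_2\cap\mathcal{E}^j_3)^c]$. The key is the measurability remark that $(\mathcal{E}^j_2\cap\mathcal{E}^j_3)^c$ is already decided once the drain starts, after which the monotone coupling of Proposition~\ref{prop:OrderedSystems} and the busy-period estimate of Proposition~\ref{prop:BusyPeriodArrival} control the conditional expectation of $\lvert W^j_2\rvert$ uniformly. A minor point is the discrepancy between $\lceil\alpha_j l_2\rceil$ and $\alpha_j l_2$ in the count term, which is at most $1$ and harmless.
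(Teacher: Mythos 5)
Your proposal is correct and follows essentially the same route as the paper's proof. There are two small, cosmetic differences. First, you restrict the sum to the tighter set $W^j_2$ of arrivals not already charged to $G^j_1$, whereas the paper simply sums over \emph{all} of phase $2$ (from $\tilde{n}^j+1$ to $\tilde{n}^j+\alpha_jl_2+L_2^j$) and explicitly accepts the resulting ``double-count'' with $G^j_1$ as a harmless over-count; your $\lvert W^j_2\rvert$ is then bounded by the same quantity $\alpha_jl_2+L_2^j$, so nothing is gained or lost. Second, you spell out the measurability/strong Markov argument behind the factorization $\mathbb{E}\big[\mathbbm{1}_{(\mathcal{E}^j_2\cap\mathcal{E}^j_3)^c}L_2^j\big]\le g(K^*(j);K^*(j))\,\mathbb{P}\big[(\mathcal{E}^j_2\cap\mathcal{E}^j_3)^c\big]$, namely that $\mathcal{E}^j_2$ and $\mathcal{E}^j_3$ are decided by the start of phase $2$ (hence before the drain) and the conditional expected drain length is controlled uniformly via Propositions~\ref{prop:OrderedSystems} and~\ref{prop:BusyPeriodArrival}; the paper leaves this step more terse, writing directly $\mathbb{E}[L_2^j\mid(\mathcal{E}^j_2\cap\mathcal{E}^j_3)^c]\le\mathbb{E}[\tau^{K^*(j),K^*(j)}]$. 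The decomposition into a count-loop part bounded by $\alpha_jl_2$ and a drain part bounded by $g(K^*(j);K^*(j))$, the per-summand bound $1+K^*(j)$ using $j>e^{\bar K}\Rightarrow K^*(j)\ge\bar K$, and the coupling to an $M/M/1/K^*(j)$ queue started at $K^*(j)$ are all exactly the paper's moves.
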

}
\Added{
 \begin{proof}\comm{[Proof of Proposition \ref{prop:Phase2Reg}]}
 	Let $\tilde{n}^j$ denote the total number of customers that arrived until the beginning of phase 2 of the $j^{th}$ batch. Note that when phase 1 did not happen in the $j^{th}$ batch, $\tilde{n}^j = n^j$, and when phase 1 happened, $\tilde{n}^j = n^j + l_1$. However, since we are analyzing the regret accumulated in phase 2 because of using an incorrect threshold and not conditional on having a phase 1 or no, using $\tilde{n}^j$ would give simpler expressions during the analysis. By its definition, $G_2^j$ takes into consideration only part of the regret that is accumulated in phase $2$. Since we are interested in finding an upper bound, we will ``double-count'' parts of the regret that are already considered in $G_1^j$ in the case that there is a phase $1$ and compute the regret accumulated during phase 2.   
 	Set 
  $L_2^j := \min\{n\mid Q_{\tilde{n}^j +\alpha_jl_2+n} = 0\}$.
  This is the total number of arriving customers beyond the first $\alpha_jl_2$ ones during the exploitation phase for the $j^{th}$ batch. 
 	Using \eqref{eq:RegretUpperBound3} and $(\mathcal{E}^j_2 \cap \mathcal{E}^j_3)^c$, we get:
 		\begin{align*}
 			G^j_2
 			&\leq \left(R + \frac{C}{\lambda}\right)\mathbb{E}\left[ \sum\limits_{i =\tilde{n}^j+1}^{\tilde{n}^j+\alpha_{j}l_2 + L_2^j} \left\rvert \mathbbm{1}_{\left\{\bar{Q}_i<\bar{K}\right\}} -  \mathbbm{1}_{\left\{Q_i<K(j)\right\}} \right\rvert\mathbbm{1}_{\left\{\left( \mathcal{E}^j_2 \cap \mathcal{E}^j_3\right)^c\right\}} \right]\\
 			& \quad + \left(R + \frac{C}{\lambda}\right) \mathbb{E}\left[ \sum_{i = \tilde{n}^j}^{\tilde{n}^j+\alpha_j l_2 + L_2^j} \left\rvert \bar{Q}_i - Q_i \right\rvert \mathbbm{1}_{ \left\{\left( \mathcal{E}^j_2 \cap \mathcal{E}^j_3\right)^c\right\}}\right] \\
 			&=:\left(R + \frac{C}{\lambda}\right)\Big((\rom{3})+(\rom{4})\Big).
 		\end{align*}
 	In what follows we bound the two expectations on the RHS. For the first expectation, since $\rvert \mathbbm{1}_{\{\bar{Q}_i<\bar{K}\}} -  \mathbbm{1}_{\{Q_i<K(j)\}}\rvert \leq 1$, after splitting phase $2$ into two parts, we get: 
 		\begin{align*}
 			(\rom{3}) & \leq  \mathbb{E}\left[  \sum\limits_{i =\tilde{n}^j+1}^{\tilde{n}^j+\alpha_jl_2 }\mathbbm{1}_{ \left\{\left(\mathcal{E}^j_2 \cap \mathcal{E}^j_3\right)^c\right\}} \right] +  \mathbb{E}\left[  \sum\limits_{i =\tilde{n}^j+\alpha_jl_2+1}^{\tilde{n}^j+\alpha_jl_2 + L_2^j} \mathbbm{1}_{ \left\{\left( \mathcal{E}^j_2 \cap \mathcal{E}^j_3\right)^c\right\}} \right]\\
 			&= \mathbb{E}\left[  \alpha_jl_2\mathbbm{1}_{ \left\{\left(\mathcal{E}^j_2 \cap \mathcal{E}^j_3\right)^c\right\}} \right] +  \mathbb{E}\left[  L_2^j \mathbbm{1}_{ \left\{\left( \mathcal{E}^j_2 \cap \mathcal{E}^j_3\right)^c\right\}} \right]\\
 			& = \alpha_jl_2\mathbb{P}\left[ \left( \mathcal{E}^j_2 \cap \mathcal{E}^j_3\right)^c\right] +  \mathbb{E}\left[  L_2^j \left\rvert  \left(\mathcal{E}^j_2 \cap \mathcal{E}^j_3\right)^c\right. \right]\mathbb{P}\left[ \left( \mathcal{E}^j_2 \cap \mathcal{E}^j_3\right)^c\right].
 		\end{align*}
 		Using a similar way of analyzing $L_1^j$ in the proof of Lemma \ref{lem:Phase1Reg} but comparing with a coupled system that uses threshold $K^*(j)$ and having initial queue-length $K^*(j)$,  we get: 
 		\begin{align*}
 			 \mathbb{E}\left[ L_2^j \left\rvert \left( \mathcal{E}^j_2 \cap \mathcal{E}^j_3\right)^c\right.\right]\leq \mathbb{E}\left[\tau^{K^*(j),K^*(j)}\right] \leq g(K^*(j); K^*(j)).
 		\end{align*}
 		Together with the inequalities above, we get a bound for (\rom{3}): 
 		\begin{align*}
 			(\rom{3}) &\leq \left( \alpha_j l_2 + g(K^*(j);K^*(j))\right)\mathbb{P}\left[ \left( \mathcal{E}^j_2 \cap \mathcal{E}^j_3\right)^c\right].
 		\end{align*}
 	We can split (\rom{4}) in a similar manner as above, and then, together with $Q_i \leq K^*(j)$, we have: 
 		\begin{align*}
 			(\rom{4}) 
 			& \leq K^*(j) \left(\mathbb{E}\left[\sum_{i = \tilde{n}^j}^{\tilde{n}^j+\alpha_jl_2 }  \mathbbm{1}_{ \left\{\left(\mathcal{E}^j_2 \cap \mathcal{E}^j_3\right)^c\right\}}\right]+\mathbb{E}\left[ \sum_{i = \tilde{n}^j\alpha_jl_2}^{\tilde{n}^j+\alpha_jl_2 + L_2^j} \mathbbm{1}_{ \left\{\left(\mathcal{E}^j_2 \cap \mathcal{E}^j_3\right)^c\right\}}\right] \right)\\
 			&\leq K^*(j)\left( \alpha_j l_2 + g(K^*(j);K^*(j))\right)\mathbb{P}\left[  \left(\mathcal{E}^j_2 \cap \mathcal{E}^j_3\right)^c \right].
 		\end{align*}
 		Combining the bounds for $(\rom{3})$ and $(\rom{4})$ , we get: 
 			\begin{align*}
 				G^j_2 &\leq  \left(R + \frac{C}{\lambda}\right)\bigg(\left(1+ K^*(j)\right)\alpha_jl_2 + \big(    1+ K^*(j)\big)g(K^*(j),K^*(j))\bigg)\mathbb{P}\left[ \left( \mathcal{E}^j_2 \cap \mathcal{E}^j_3\right)^c\right].
 			\end{align*}
 		with $g(l;K) $ defined in Proposition \ref{prop:BusyPeriodArrival}.
\end{proof}
}
\Added{
Before proving the regret bound for Algorithm \ref{alg:Alg3}, the following remark gives an upper bound on the regret accumulated during the first $\floor{e^{\bar{K}}}$ batches where the upper-bound of the threshold used in the phase $2$ of the learning systems may be smaller than $\bar{K}$.
\begin{remark}\label{rem:RegretBeforeReachBound}
	Recall that the queue-length of each batch does not exceed $K^*(j)$ in the $j^{th}$ batch. Following the definition of $K^*(j)$, when $j\geq \ceil{e^{\bar{K}}}$, $K^*(j)\geq \bar{K} + l_1 + Q_0 \geq \bar{K}$.  The regret accumulated during the first $\floor{e^{\bar{K}}}$ batches is at the most 
	\begin{align*}
		G_0 := \left(R + \frac{C}{\lambda}\right)\sum_{j = 1}^{\ceil{e^{\bar{K}}}}  \bigg( K^*(j)+ \bar{K}+1\bigg)\left(l_1 +\alpha_j l_2 + g(K^*(j);K^*(j))\right),
	\end{align*}
	where $g(l;K)$ is defined in Proposition \ref{prop:BusyPeriodArrival}. This bound is loose since it assumes that phase 1 happens at each batch and a worst-case assumption of regret being accumulated at all times is enforced. Note that the bound is a finite function of the system parameters. 
\end{remark} 
}
\Added{
 	\subsection{Proof of theorem \ref{thm:Thm1}}\label{subsection:pfthm1}
 In  the case that $\bar{K}> 0$, using inequality \eqref{eq:RegretInJ}, Lemma \ref{lem:Phase1Reg}, and Lemma \ref{lem:Phase2Reg}, we have: 
 	\begin{align*}
 			&  \sum_{j = \ceil{e^{\bar{K}}}}^{\left\lceil N/l_2\right\rceil} G^j_1+ G^j_2
     \leq \left(R +  \frac{C}{\lambda}\right)\sum_{j =\ceil{e^{\bar{K}}}}^{\left\lceil N/l_2 \right\rceil} \Big(l_1^2 + \left(\bar{K}+1\right) l_1 +\big(    1+ K^*(j)\big)g(l_1;K^*(j))\Big)\mathbb{P}\left[\mathcal{E}^j_1\right]\\
     &\qquad\qquad +\left(R +  \frac{C}{\lambda}\right)\sum_{j =\ceil{e^{\bar{K}}}}^{\left\lceil N/l_2 \right\rceil}  \Big((1+K^*(j))\alpha_jl_2 + \big(    1+ K^*(j)\big)g(K^*(j);K^*(j))\Big) \mathbb{P}\left[ \left(\mathcal{E}^j_2 \cap \mathcal{E}^j_3\right)^c\right].
 	\end{align*}
 Substituting values/bounds for $\mathbb{P}[\mathcal{E}^j_1]$ and $\mathbb{P}[ \left(\mathcal{E}^j_2 \cap \mathcal{E}^j_3\right)^c]$ from Corollary \ref{cor:SameThreshProb} and Proposition \ref{prop:BadEventProb}, we get:
{\footnotesize
\begin{align*}
	&\sum_{j = \ceil{e^{\bar{K}}}}^{\left\lceil N/l_2 \right\rceil} G^j_1+ G^j_2\\
	&\leq \sum_{j = \ceil{e^{\bar{K}}}}^{\left\lceil N/l_2 \right\rceil} \left(R +  \frac{C}{\lambda}\right)\Big(l_1^2 + \left(\bar{K}+1\right) l_1 +\big(    1+ K^*(j)\big)g(l_1,K^*(j))\Big)\frac{\ln^{\epsilon}(j)}{j}\Big(C_1\exp(-C_2\ln^{1+\epsilon}(j))
    +C_3e^{-C_4\beta_j}\Big)\\
	&\qquad +  \sum_{j = \ceil{e^{\bar{K}}}}^{\left\lceil N/l_2 \right\rceil} \left(R +  \frac{C}{\lambda}\right)\big(    1+ K^*(j)\big)\Big( \alpha_jl_2+g(K^*(j),K^*(j))\Big) \\
	&\qquad\qquad\quad \times \bigg( C_1\exp\Big(-C_2\ln^{1+\epsilon}(j-1)\Big)  +C_1\exp\Big(-C_2\ln^{1+\epsilon}(j)\Big)
    + C_3 e^{-C_4\beta_j} + C_3 e^{-C_4 \beta_{j-1}}
	+ \big(c_{\bar{K}}\big)^{\alpha_{j-1}l_2}\bigg),
\end{align*} 
}where $g(l;K)$ is defined in Proposition \ref{prop:BusyPeriodArrival} and is of order $O((\lambda/\mu)^{K} + K^2)$.
Recall that $\beta_j\geq j$.  All terms involved are partial sums of convergent series when $\alpha_j$ increases to infinity as a function bounded by polynomial in $j$. Therefore $\lim_{N\rightarrow\infty}G(N)$ is bounded, and the proposed algorithm achieves $O(1)$ regret in the case that $\bar{K}>0$. 
}
\Added{
 	\subsection{Proof of Theorem \ref{thm:Thm2}}\label{subsection:pfthm2}
 Similarly to the proof of Theorem \ref{thm:Thm1}, using inequality \eqref{eq:RegretInJ}, Lemma \ref{lem:Phase1Reg},  Lemma \ref{lem:Phase2Reg}, Corollary \ref{cor:SameThreshProb} and Proposition \ref{prop:BadEventProb}, we have: 
 {\small
 	\begin{align*}
 		&\sum_{j =\ceil{e^{\bar{K}} }}^{\left\lceil N/l_2 \right\rceil}   G^j_1+ G^j_2
   \leq    \sum_{j = \ceil{e^{\bar{K}} }}^{\left\lceil N/l_2 \right\rceil}    \left(R +  \frac{C}{\lambda}\right)\left(l_1^2 +  l_1  +C_5\right) \frac{\ln^{\epsilon}(j)}{j}\\
   &\quad \qquad + \sum_{j = \ceil{e^{\bar{K}} }}^{\left\lceil N/l_2 \right\rceil}  \left(R +  \frac{C}{\lambda}\right) \big(    1+ K^*(j)\big)g(l_1,K^*(j)) \Big( C_1\exp(-C_2\ln^{1+\epsilon}(j)) +C_3\exp(-C_4\beta_j)\Big) \\
 		&\quad \qquad+  \sum_{j = \ceil{e^{\bar{K}}}}^{\left\lceil N/l_2 \right\rceil}  \left(R + \frac{C}{\lambda}\right)(1+K^*(j))\Big(\alpha_jl_2 + g(K^*(j);K^*(j))\Big)\Big(C_1\exp(-C_2\ln^{1+\epsilon}(j))+C_3\exp(-C_4\beta_j)\Big).
 	\end{align*}
  }
 The dominant term on the RHS above is $$\sum_{j 
 	= \ceil{e^{\bar{K}}}}^{\left\lceil N/l_2 \right\rceil}  \left(R + \frac{C}{\lambda}\right)\Big(l_1^2 + l_1+ C_5\Big)\frac{\ln^{\epsilon}(j)}{j}.$$ 
  When $N$ is large, we have $$\sum_{j = 2}^{\left\lceil N/l_2 \right\rceil} \frac{\ln^{\epsilon}(j)}{j} = O(\ln^{1+\epsilon}(N)).$$
 Hence the regret for $\bar{K} = 0$ is of order $O(\ln^{1+\epsilon}(N))$. 
}
\Added{
\begin{remark}\label{rem:arrival_rate}
    We mentioned earlier that one can adapt the analysis to the case when only the service rate is unknown or only the arrival rate is unknown by adjusting the probability of the learning system using the optimal thresholds in phase 2 and receiving similar regret bounds. As shown in the prof above, in the case when the optimal threshold is 0, the reason why the regret is $O(\ln^{1+\epsilon}(N))$ is that phase 1 is likely to happen infinitely  often so that enough samples of the service rate can be obtained. This explicit exploration phase is necessary when the service rate is unknown. However, when only the arrival rate is unknown, the learning system would always obtain free samples for the arrival rates whether accepting customers to the queue or not. In this case, it would be  unnecessary to explore explicitly, so that an $O(1)$ regret results similar to the case where the optimal threshold is non-zero when one always omits phase $1$ and only the arrival rate is unknown.
\end{remark}
}
\Added{
\begin{remark}\label{rem:order}
    The regret analysis above showed that we can obtain constant regret for the case where the optimal thresholds are non-zeros, and an $O(\ln^{1+\epsilon}(N))$ regret when $0$ is an optimal threshold for any fixed $\epsilon >0$. From the proof of Theorem \ref{thm:Thm2}, the order of the regret is a result of explicit exploration as it is the dominant term. One natural question is the following: can we further reduce the order of the regret in the case that $0$ is an optimal threshold while preserving the constant regret in the case that the optimal threshold is non-zero, if we reduce $\mathbb{P}\left[B^j = 1\right]$, the probability of having phase 1 when the previous phase 2 uses threshold 0? Following the steps of our proof we can show that having $\mathbb{P}\left[B^j = 1\right] = \ln(\ln(j))/j$ would result in regret accumulating slower than $O(\ln^{1+\epsilon}(N))$ for any $\epsilon >0$ in the case that 0 is a optimal threshold, and constant regret in the case that the optimal threshold is non-zero. However, this result would hold for large enough $N$, as the finite time performance of using $\mathbb{P}\left[B^j = 1\right] = \ln(\ln(j))/j$ may not out-perform our discussed choices for $\mathbb{P}\left[B^j = 1\right]$ as it would require $j$ to be extremely large (but still finite) to show improved performance. 
    \end{remark}

    \begin{remark}\label{rem:conjecture}
    We believe that the dramatically different behaviors for our algorithm between cases when $0$ is an optimal threshold, and when it is not, is fundamental to our problem owing to completely different demands in two parameter regimes: in one case, no customers should be dispatched at all, versus the other case where asymptotically a positive fraction of customers are dispatched. Hence, we conjecture that for any given learning-based dispatching algorithm the regret accumulated would grow at least at $\Omega(\ln(N))$ when the parameters are chosen in an adversarial manner. Note that our algorithm satisfies this conjecture. We will argue later on in Section~\ref{section:Numerical} that an Upper-Confidence Bound (UCB) scheme will have a worst-case regret over parameter choices of $\Omega(\ln(N))$. 
\end{remark}
}
	\section{Non-unique admittance threshold case}\label{section:MultiThresh}
	 When the dispatcher uses a static threshold policy, the queue-length process is Markovian and ergodic.  
    \cite{Naor} showed that the social welfare (long-term average profit in \eqref{def:Long-termAverageProfit}) is maximized when using the static threshold $\bar{K}$ that uniquely satisfies \eqref{eq:ThresholdIneq} by analyzing the stationary distributions of the queue-length process for all possible static threshold policies.  When \eqref{eq:ThresholdIneq} holds with equality and $\bar{K} \geq 1$, static thresholds $\bar K$ and $\bar K-1$ are both optimal, and furthermore, policies that (stochastically) alternate between the thresholds $\bar{K}$ and $\bar{K}-1$ with a fixed probability yield the same long-term average profit, i.e., are optimal for the ergodic reward maximization problem. This complicates our regret analysis as we will need to pick a specific ergodic reward-maximizing policy for our regret analysis.
	
	In Section \ref{subsection:IntCaseLearningThresh} we analyze the learned threshold; in Section \ref{subsection:IntCaseAnotherGenie}, we introduce the specific ergodic reward maximizing genie-aided dispatcher that we will compare to, which we will label the alternating genie-aided dispatcher; and finally, Section \ref{subsection:IntCaseRegret} is devoted to the analysis of the regret of the learning algorithm compared to the specific genie-aided dispatcher introduced earlier.

	\subsection{ Threshold used by the learning dispatcher in phase 2. } \label{subsection:IntCaseLearningThresh}
	Following Algorithm \ref{alg:Alg3}, the threshold used by the learning dispatcher in the $j^{th}$ phase $2$ is $K(j) = \min(K^*(j),K)$, where $K$ is the unique integer that satisfies the inequality \Added{$V(K, 1/\hat{m}, \hat{\nu})\leq R/C< V(K+1, 1/\hat{m},1/\hat{\nu})$, where $\hat{m}$ is the empirical average service time, and $\hat{\nu}$ is the empirical inter-arrival time, computed using all completed services and observed arrivals before each phase $2$. As mentioned earlier, the threshold is fixed throughout each phase $2$. Proposition \ref{prop:FuncVProp} implies that as long as the estimations are accurate so that inequalities \eqref{eq:ICRegionOfParam} are satisfied, and when $j\geq \ceil{e^{\bar{K}}}$, the learning dispatcher would use a threshold in $\{ \bar{K}, \bar{K}-1\}$ during the $j^{th}$ phase $2$. Proposition \ref{prop:SampleEst} still holds when equality holds in \eqref{eq:ThresholdIneq}.} Unlike in the previous case where we showed that eventually, the learning dispatcher uses the same threshold $\bar{K}$ as the genie-aided dispatcher in phase $2$, we now show that as the number of batches goes to infinity, the learning algorithm will (eventually) stochastically alternate only between 
	 the thresholds $\bar{K}$ or $\bar{K}-1$. 

\Added{We first state the analogous of Proposition \ref{prop:AccuServ}, Proposition \ref{prop:AccuArri} and Corollary \ref{cor:SameThreshProb}.
}
\Added{
\begin{prop}\label{prop:ICAccuServ}
	\Added{Let $\hat{m}(j)$ denote  the empirical service time estimated by the learning dispatcher at the beginning of phase 2 of the $j^{th}$ batch.  For the proposed algorithm, in case that $V(\bar{K},\mu,\lambda) = R/C$, we have, 
		\begin{align}\label{eq:ICAccuServ}
			\mathbb{P}\left[ \left\rvert \hat{m}(j) = m\right\rvert>\tilde{\Delta}_1 \right]\leq \tilde{C}_1\exp(-\tilde{C}_2\ln^{1 + \epsilon}(j)),
		\end{align}
		where 
		\begin{align}\label{def:ICc1c2}
			\begin{split}
				\tilde{C}_1 &:= \max\left\{ \exp\left(-\frac{C_0(\epsilon)}{8(1 + \epsilon)}\right),\; \frac{2\exp{\left( \tilde{\Delta}_1^2/(8m^2)\right)}}{\exp{( \tilde{\Delta}_1^2/(8m^2))}-1},\; 1\right\},\\
				\tilde{C}_2 &:= \min\left\{ \frac{l_1\mu}{16(1 + \epsilon)(\lambda+\mu)}, \;\frac{1}{8(1 + \epsilon)}, \;\frac{l_1\mu\tilde{\Delta}_1^2}{32(1 + \epsilon)m(\lambda m+1)}\right\},
			\end{split}
		\end{align}
		with $\tilde{\Delta}_1 := \min\{ \tilde{\delta}_1, 2m\}$, and $\tilde{\delta}_1$ is a constant for the first inequality in \eqref{eq:ICRegionOfParam}  which is one part of the condition needed to reach the conclusion in \eqref{eq:ICCorrectThreshold}.  
	}
\end{prop}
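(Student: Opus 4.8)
The plan is to transcribe the proof of Proposition~\ref{prop:AccuServ} essentially verbatim, the only substantive change being that the accuracy radius $\Delta_1 = \min\{\delta_1,2m\}$ coming from \eqref{eq:RegionOfParam} is replaced by $\tilde{\Delta}_1 = \min\{\tilde{\delta}_1,2m\}$ coming from \eqref{eq:ICRegionOfParam}. The reason this works is that nothing in the service-time sampling mechanism of Algorithm~\ref{alg:Alg3} depends on whether \eqref{eq:ThresholdIneq} is strict: every phase~$1$ admits $l_1$ customers and every phase~$2$ with threshold at least $1$ keeps the post-arrival queue-length positive, so each inter-arrival interval with at least one potential departure yields at least one newly observed completed service. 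Consequently Proposition~\ref{prop:SampleEst} applies unchanged (as already noted in the text preceding this proposition), giving the same high-probability lower bound $D_j > l_1\ln^{1+\epsilon}(j)\mu/(4(1+\epsilon)(\lambda+\mu))$ up to two exponentially small failure terms.

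First I would recall that the service times $S_i$ are i.i.d.\ $\mathrm{EXP}(1/m)$, hence $(4m^2,2m)$ sub-exponential, so $\sum_{i=1}^{k} S_i$ is $(4m^2 k, 2m)$ sub-exponential; since $0 \le k\tilde{\Delta}_1 \le 2mk$ by the choice $\tilde{\Delta}_1 \le 2m$, Proposition~\ref{prop:SubExpTailBound} together with the union bound over the random number of observed samples (exactly as in the proof of Proposition~\ref{prop:AccuServ}, using $\mathbb{P}[\sum_{i=1}^{n+1}\zeta_i \le c] \le \mathbb{P}[\sum_{i=1}^{n}\zeta_i \le c]$ to drop the conditioning) yields, for any $n$,
\[
\mathbb{P}\left[\,|\hat{m}(j) - m| > \tilde{\Delta}_1 \ \big|\ D_j > n\right]
\le \sum_{k = n+1}^{\infty} 2\exp\left(-\frac{k\tilde{\Delta}_1^2}{8m^2}\right)
\le \frac{2\exp(\tilde{\Delta}_1^2/(8m^2))}{\exp(\tilde{\Delta}_1^2/(8m^2)) - 1}\,\exp\left(-\frac{(n+1)\tilde{\Delta}_1^2}{8m^2}\right).
\]
Then I would substitute $n = \lfloor l_1\ln^{1+\epsilon}(j)\mu/(4(1+\epsilon)(\lambda+\mu))\rfloor$, bound the floor below by $l_1\ln^{1+\epsilon}(j)\mu/(4(1+\epsilon)(\lambda+\mu)) - 1$, and split $\mathbb{P}[\,|\hat{m}(j)-m|>\tilde{\Delta}_1]$ according to whether $D_j$ exceeds that threshold: the first piece is handled by the displayed conditional bound and the second by Proposition~\ref{prop:SampleEst}. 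Collecting the three exponentials and taking the worst of the resulting constants and rates delivers the bound with $\tilde{C}_1,\tilde{C}_2$ as in \eqref{def:ICc1c2}.

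I do not expect a genuine obstacle here; the only point that requires care (and which I would state explicitly) is the insensitivity of Proposition~\ref{prop:SampleEst} to whether \eqref{eq:ThresholdIneq} holds with equality — its proof uses only the batch structure of Algorithm~\ref{alg:Alg3}, the Bernoulli schedule $\{B^i\}$, and the memorylessness of the arrival and potential-departure processes, none of which involve the value of $\bar{K}$ or the multiplicity of optimal thresholds. The analogous statements for the arrival rate (Proposition~\ref{prop:AccuArri}) and for the event $\{K(j)\in\{\bar{K},\bar{K}-1\}\}$ (the analogue of Corollary~\ref{cor:SameThreshProb}) then follow in the same way, now invoking Proposition~\ref{prop:FuncVProp} and \eqref{eq:ICCorrectThreshold} in place of \eqref{eq:CorrectThreshold} and using $K^*(j)\ge \bar{K}$ for $j \ge \lceil e^{\bar{K}}\rceil$ to guarantee the cap $K^*(j)$ does not interfere.
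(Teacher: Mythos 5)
Your proposal is correct and coincides with the paper's own (one-line) proof, which simply states that the argument of Proposition~\ref{prop:AccuServ} carries over with $\Delta_1$ replaced by $\tilde{\Delta}_1$. You have spelled out why the transfer is legitimate — in particular that Proposition~\ref{prop:SampleEst} is insensitive to whether \eqref{eq:ThresholdIneq} holds with equality — which is exactly the observation on which the paper implicitly relies.
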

\begin{proof}
	The proof is the same as the proof of Proposition \ref{prop:AccuServ}, but with different constants.
\end{proof}
}
\Added{
	
\begin{prop}\label{prop:ICAccuArri}
	Let $\nu(j)$ denote the empirical inter-arrival time estimated by the learning dispatcher at the beginning of phase 2 of the $j^{th}$ batch. For the proposed algorithm, in case that $V(\bar{K},\mu,\lambda) = R/C$, we have,
	\begin{align*}
		\mathbb{P}\left[ \rvert \nu - \hat{\nu}(j)\rvert >\tilde{\Delta}_2 \right]\leq\tilde{C}_3\exp(-\tilde{C}_4\beta_j)  ,
	\end{align*}
	where
	\begin{align}\label{def:ICc3c4}
			\tilde{C}_3 
   :=  \frac{2\exp(\tilde{\Delta}_2^2/(8\nu^2))}{\exp(\tilde{\Delta}_1^2/(8\nu^2))-1}\qquad\text{ and }\qquad
			\tilde{C}_4 
   :=  \frac{l_1\tilde{\Delta}_2^2}{8\nu^2},
	\end{align}
	where $\beta_j$ is defined in Proposition \ref{prop:AccuArri}, and $\tilde{\Delta}_2:= \min\{ \tilde{\delta}_2, 2\nu\}$,
	where $\tilde{\delta}_2$ is the constant in the second inequality in \eqref{eq:ICRegionOfParam} that is the second part needed to reach the conclusion in \eqref{eq:ICCorrectThreshold}. 
\end{prop}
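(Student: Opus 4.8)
The plan is to reproduce the proof of Proposition~\ref{prop:AccuArri} essentially line for line, changing only the constants. Concretely, I would replace $\delta_1,\delta_2$ by the constants $\tilde\delta_1,\tilde\delta_2$ of \eqref{eq:ICRegionOfParam}, and hence $\Delta_1,\Delta_2,C_3,C_4$ by $\tilde\Delta_1,\tilde\Delta_2,\tilde C_3,\tilde C_4$ as recorded in \eqref{def:ICc3c4}. The reason the argument transfers unchanged is that the proof of Proposition~\ref{prop:AccuArri} never used anything specific to the strict-inequality regime of \eqref{eq:ThresholdIneq}: it relied only on (i) the learning dispatcher observing \emph{every} inter-arrival time, whether or not the arriving customer is admitted to the queue, and (ii) these inter-arrival times being i.i.d.\ $\mathrm{EXP}(1/\nu)$. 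Both remain true when $V(\bar K,\mu,\lambda)=R/C$; the constants differ only because here the target accuracy $\tilde\Delta_2$ is the one required to land inside the parameter window of \eqref{eq:ICRegionOfParam} that guarantees conclusion \eqref{eq:ICCorrectThreshold}.

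First I would record a \emph{deterministic} lower bound on the number of inter-arrival samples available at the start of phase~$2$ of the $j^{\mathrm{th}}$ batch. Batch~$1$ always contains a phase~$1$, contributing $l_1$ arrivals, and, for every $i$, the (always-present) phase~$2$ of batch~$i$ contributes at least $\alpha_i l_2\ge\alpha_i l_1$ arrivals since $l_2\ge l_1$. Hence the dispatcher has observed at least $l_1+\sum_{i=1}^{j-1}\alpha_i l_1=\beta_j l_1$ inter-arrival times, where $\beta_j:=1+\sum_{i=1}^{j-1}\alpha_i$ as in \eqref{def:c3c4}. Note that no probabilistic estimate is needed for this step; this is precisely where the arrival-rate estimate is easier than the service-rate estimate of Propositions~\ref{prop:AccuServ} and~\ref{prop:ICAccuServ}, which required the high-probability sample-count bound of Proposition~\ref{prop:SampleEst} because completed services are observed only for customers who actually enter the queue.

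Next I would invoke sub-exponential concentration. Letting $A_1,A_2,\dots$ be the successive inter-arrival times, each $A_i$ is $(4\nu^2,2\nu)$ sub-exponential (an exponential random variable of mean $\nu$), so $\sum_{i=1}^k A_i$ is $(4\nu^2 k,2\nu)$ sub-exponential, and I would apply Proposition~\ref{prop:SubExpTailBound}. The only point to check is that the deviation $k\tilde\Delta_2$ lies in the Gaussian branch of that bound, i.e.\ $0\le k\tilde\Delta_2\le 4\nu^2 k/(2\nu)=2\nu k$, which holds because $\tilde\Delta_2=\min\{\tilde\delta_2,2\nu\}\le 2\nu$. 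Thus $\mathbb{P}[\,|\sum_{i=1}^k A_i-k\nu|>k\tilde\Delta_2\,]\le 2\exp(-k\tilde\Delta_2^2/(8\nu^2))$, and a union bound over the possible (at least $\beta_j l_1$) sample counts followed by the geometric-series formula gives $\mathbb{P}[\,|\nu-\hat\nu(j)|>\tilde\Delta_2\,]\le\tilde C_3\exp(-\tilde C_4\beta_j)$ with $\tilde C_3,\tilde C_4$ as in \eqref{def:ICc3c4}.

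I do not expect a genuine obstacle; the argument is a strict simplification of the service-rate case. The only minor bookkeeping point is that $\hat\nu(j)$ is the empirical mean of a \emph{random} number of samples that is merely \emph{at least} $\beta_j l_1$; this is handled exactly as in Proposition~\ref{prop:AccuArri} by noting that the relevant two-sided tail probability is nonincreasing in the sample count (adding a sample only tightens the concentration bound), so the worst case is the smallest admissible count and the geometric sum is started there.
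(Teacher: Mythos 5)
Your proposal is correct and takes essentially the same approach as the paper, whose proof of this proposition is simply the one-line remark that it is ``the same as the proof of Proposition~\ref{prop:AccuArri}, but with different constants.'' You faithfully reproduce that argument -- the deterministic lower bound of $\beta_j l_1$ on the number of observed inter-arrival times (since every arrival is seen regardless of admission decisions), the $(4\nu^2,2\nu)$ sub-exponential tail bound from Proposition~\ref{prop:SubExpTailBound}, the worst-case-over-sample-count union bound, and the geometric-series summation -- with $\delta_2,\Delta_2,C_3,C_4$ replaced by $\tilde\delta_2,\tilde\Delta_2,\tilde C_3,\tilde C_4$, and you correctly pinpoint why no analogue of the high-probability sample-count estimate of Proposition~\ref{prop:SampleEst} is needed here.
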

\begin{proof}
	The proof is the same as the proof of Proposition \ref{prop:AccuArri}, but with different constants.
\end{proof}
}
\Added{
	 	\begin{cor}\label{cor:ICSamePolicy}
	 		For the proposed algorithm, when $j\geq \ceil{e^{\bar{K}}}$ , in case that $V(\bar{K},\mu,\lambda) = R/C$,
	 		\begin{align}
	 			\mathbb{P}\left[ \{K(j) \neq \bar{K} \} \cap\{ K(j) \neq \bar{K}-1\} \right]&\leq \tilde{C}_1\exp(-\tilde{C}_2\ln^{1 + \epsilon}(j)) +\tilde{C}_3\exp(-\tilde{C}_4\beta_j)  ,
	 		\end{align}
	 		where $\tilde{C}_1$ and $\tilde{C}_2$ are defined in \eqref{def:ICc1c2} and $\tilde{C}_3$ and $C_4$  are defined in \eqref{def:ICc3c4}.
	 
	 \end{cor}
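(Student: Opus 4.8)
The plan is to mirror the proof of Corollary~\ref{cor:SameThreshProb}, but with the ``unique threshold'' accuracy window and estimates replaced by their non-unique counterparts. Concretely, I would combine the service-time concentration bound of Proposition~\ref{prop:ICAccuServ} with the inter-arrival-time concentration bound of Proposition~\ref{prop:ICAccuArri} via a union bound, after first showing that the relevant good event (both estimates accurate) forces $K(j)\in\{\bar K-1,\bar K\}$.

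The key steps, in order, are as follows. First, recall from Proposition~\ref{prop:FuncVProp} and the discussion leading to \eqref{eq:ICCorrectThreshold} that, in the equality case $V(\bar K,\mu,\lambda)=R/C$, whenever the empirical estimates satisfy $|m-\hat m(j)|<\tilde\delta_1$ and $|\nu-\hat\nu(j)|<\tilde\delta_2$, the unique integer $K$ solving $V(K,1/\hat m(j),1/\hat\nu(j))\le R/C< V(K+1,1/\hat m(j),1/\hat\nu(j))$ lies in $\{\bar K-1,\bar K\}$. Since $\tilde\Delta_1=\min\{\tilde\delta_1,2m\}\le\tilde\delta_1$ and $\tilde\Delta_2=\min\{\tilde\delta_2,2\nu\}\le\tilde\delta_2$, the event $\{|m-\hat m(j)|\le\tilde\Delta_1\}\cap\{|\nu-\hat\nu(j)|\le\tilde\Delta_2\}$ already implies $K\in\{\bar K-1,\bar K\}$. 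Second, note that for $j\ge\lceil e^{\bar K}\rceil$ we have $K^*(j)\ge\bar K$, so on the same event $K(j)=\min\{K^*(j),K\}=K\in\{\bar K-1,\bar K\}$, i.e., the truncation by $K^*(j)$ is inactive. Third, pass to complements and apply a union bound to obtain
\begin{align*}
\mathbb{P}\left[\{K(j)\neq\bar K\}\cap\{K(j)\neq\bar K-1\}\right]\le \mathbb{P}\left[|m-\hat m(j)|>\tilde\Delta_1\right]+\mathbb{P}\left[|\nu-\hat\nu(j)|>\tilde\Delta_2\right],
\end{align*}
and finally substitute the bounds $\tilde C_1\exp(-\tilde C_2\ln^{1+\epsilon}(j))$ and $\tilde C_3\exp(-\tilde C_4\beta_j)$ from Propositions~\ref{prop:ICAccuServ} and~\ref{prop:ICAccuArri}, respectively, to conclude.

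There is no substantial obstacle here; the proof is a routine recombination of already-established estimates. The only points that require (minor) care are bookkeeping ones: checking that Proposition~\ref{prop:SampleEst}, and hence the sub-exponential concentration argument underlying Proposition~\ref{prop:ICAccuServ}, goes through unchanged when \eqref{eq:ThresholdIneq} holds with equality (it does, since that proposition only concerns the number of observed service samples and does not reference the threshold inequalities); and verifying that the truncation $K(j)=\min\{K^*(j),K\}$ cannot push $K(j)$ below $\bar K-1$ once $j\ge\lceil e^{\bar K}\rceil$, which is immediate from $K^*(j)\ge\bar K$ and $K\le\bar K$ on the good event.
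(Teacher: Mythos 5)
Your proposal is correct and mirrors the paper's own proof, which is simply stated as ``follows the same logic as the proof of Corollary~\ref{cor:SameThreshProb}, but with different constants.'' You've spelled out exactly that logic: use the equality-case accuracy window \eqref{eq:ICRegionOfParam}--\eqref{eq:ICCorrectThreshold} in place of \eqref{eq:RegionOfParam}--\eqref{eq:CorrectThreshold} to conclude $K\in\{\bar K-1,\bar K\}$ on the good event, note that $K^*(j)\ge\bar K$ for $j\ge\lceil e^{\bar K}\rceil$ makes the truncation harmless, and apply a union bound with the concentration estimates from Propositions~\ref{prop:ICAccuServ} and~\ref{prop:ICAccuArri}. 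Your side remarks (that Proposition~\ref{prop:SampleEst} is independent of the threshold inequalities, and that $\min\{K^*(j),K\}=K$ on the good event) are accurate and consistent with what the paper states elsewhere in Section~\ref{subsection:IntCaseLearningThresh}.
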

	 \begin{proof}
	The proof for this proposition follows the same logic as the proof of  Corollary \ref{cor:SameThreshProb}, but with different constants.
	 \end{proof}
}
 	\begin{cor} \label{cor:OptInd}
 		In case that $V(\bar{K},\mu,\lambda) = R/C$, there exists a random index $\mathcal{J}$ that is finite with probability $1$, where the learning algorithm would use threshold $\bar{K}$ or $\bar{K}-1$ after the $\mathcal{J}^{th}$ batch. 
 	\end{cor}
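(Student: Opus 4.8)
The plan is a first Borel--Cantelli argument applied to the sequence of ``bad'' events $A_j := \{K(j) \neq \bar{K}\} \cap \{K(j) \neq \bar{K}-1\}$, $j \geq 1$. For every $j \geq \lceil e^{\bar{K}} \rceil$, Corollary~\ref{cor:ICSamePolicy} gives $\mathbb{P}[A_j] \leq \tilde{C}_1 \exp(-\tilde{C}_2 \ln^{1+\epsilon}(j)) + \tilde{C}_3 \exp(-\tilde{C}_4 \beta_j)$, so it suffices to show that each of these two series converges in $j$; the finitely many terms with $j < \lceil e^{\bar{K}} \rceil$ clearly contribute only a finite amount and can be ignored.

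For the $\beta_j$-term, I would use that $\alpha_i \geq 1$ for all $i$, hence $\beta_j = 1 + \sum_{i=1}^{j-1}\alpha_i \geq j$, so $\exp(-\tilde{C}_4 \beta_j) \leq \exp(-\tilde{C}_4 j)$, which is summable as a geometric series. For the $\ln^{1+\epsilon}(j)$-term, write $\exp(-\tilde{C}_2 \ln^{1+\epsilon}(j)) = j^{-\tilde{C}_2 \ln^{\epsilon}(j)}$; since $\ln^{\epsilon}(j) \to \infty$, for all $j$ large enough we have $\tilde{C}_2 \ln^{\epsilon}(j) \geq 2$, so these terms are eventually dominated by $j^{-2}$ and the series converges (equivalently, the substitution $u = \ln x$ turns the comparison integral into $\int^{\infty} \exp(u - \tilde{C}_2 u^{1+\epsilon})\,du$, which is finite because $1+\epsilon > 1$). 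Therefore $\sum_{j \geq 1}\mathbb{P}[A_j] < \infty$.

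By the first Borel--Cantelli lemma, $\mathbb{P}[\limsup_j A_j] = 0$, i.e., with probability $1$ only finitely many of the events $A_j$ occur. Setting $\mathcal{J} := \sup\{j : A_j \text{ occurs}\}$ (with $\mathcal{J} := 0$ if no $A_j$ occurs), we obtain a random index that is finite almost surely, and by construction, for every batch $j > \mathcal{J}$ the learning dispatcher uses threshold $\bar{K}$ or $\bar{K}-1$ during phase $2$, which is the assertion of the corollary.

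The argument is essentially routine once Corollary~\ref{cor:ICSamePolicy} is available; the only point demanding any care is the convergence of $\sum_j \exp(-\tilde{C}_2 \ln^{1+\epsilon}(j))$, which is precisely where the restriction $\epsilon > 0$ (rather than $\epsilon = 0$) matters: with $\epsilon = 0$ the general term would be $j^{-\tilde{C}_2}$, which need not be summable since the constant $\tilde{C}_2$ from \eqref{def:ICc1c2} is not guaranteed to exceed $1$. I do not anticipate any further obstacle.
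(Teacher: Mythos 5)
Your proof is correct and follows essentially the same route as the paper: apply Corollary~\ref{cor:ICSamePolicy} to bound $\mathbb{P}[A_j]$, observe that $\sum_j \mathbb{P}[A_j] < \infty$ (using $\beta_j \geq j$ and the super-logarithmic decay of $\ln^{1+\epsilon}(j)$), and conclude by the first Borel--Cantelli lemma. If anything you are slightly more careful than the paper, which starts the sum at $j=1$ without noting that the bound from Corollary~\ref{cor:ICSamePolicy} is only stated for $j \geq \lceil e^{\bar{K}} \rceil$; your observation that the finitely many initial terms are harmless, and your explicit verification that $\sum_j \exp(-\tilde{C}_2 \ln^{1+\epsilon}(j))$ converges (and why $\epsilon>0$ is essential), fill in details the paper leaves implicit.
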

 	\begin{proof}
 		We show that the learning algorithm uses thresholds that are not $\bar{K}$ nor $\bar{K}-1$ only finitely many times with probability $1$. From Corollary \ref{cor:ICSamePolicy}, when $\bar{K}>1$, we have:
 		\begin{align*}
 			\sum_{j = 1}^{\infty} \mathbb{P}\left[\Big(\{K(j) = \bar{K}\}\cup\{K(j) = \bar{K}-1\}\Big)^c\right] \leq \sum_{j = 1}^{\infty}  \tilde{C}_1\exp(-\tilde{C}_2\ln^{1 + \epsilon}(j)) +\tilde{C}_3\exp(-\tilde{C}_4j^2) <\infty.
 		\end{align*}
 	By the Borel–Cantelli lemma (See \cite{Durrett}), we have 
  $$\mathbb{P}\left[ \limsup_{j\rightarrow \infty} \Big(\{K(j) = \bar{K}\}\cup\{K(j) = \bar{K}-1\}\Big)^c\right] = 0,$$ 
    that is,  with probability $1$, the learning algorithm uses thresholds not in $\{\bar{K},\bar{K}-1\}$ only a finite number of times. Thus, almost surely the learning algorithm uses the optimal thresholds $\bar{K}$ and $\bar{K}-1$ after a finite random time. When $\bar{K} = 1$, a similar proof holds. 
 	\end{proof}
 
	\subsection{An alternating genie-aided dispatcher coupled with the learning dispatcher that maximizes the long-term average profit} \label{subsection:IntCaseAnotherGenie} If we compare our learning algorithm with a genie-aided system that uses a static threshold $\bar{K}$ (or alternatively $\bar{K}-1$), the regret will not be constant even when $\bar{K}>1$. The reason is that the learning dispatcher may switch between the thresholds $\bar{K}$ and $\bar{K}-1$ in different phase $2$s even when $\hat{m} \in (m - \epsilon, m+\epsilon)$, where $\epsilon$ is sufficiently small. However, we can compare the queue-length process under the learning dispatcher with an  optimal  genie-aided dispatcher to which we refer to as the \emph{alternating genie-aided dispatcher}: a dispatcher who may change the threshold used between $\bar{K}$ and $\bar{K}-1$ at the beginning of any busy cycle (a busy period plus an immediately following idle period). \Added{We will ensure that the threshold-changing policy of this alternating genie-aided dispatcher is adapted to the filtration generated by the queue-lengths of the two systems and the random variable $B^j$, with the threshold remaining unchanged during each busy cycle}. 
It is worth mentioning that although the learning dispatcher may compute and change the threshold at the beginning of each phase $2$ (which may involve multiple busy cycles), only the genie-aided dispatcher may change the threshold at the beginning of a busy cycle. 
 This alternating genie-aided dispatcher is aware of the fact that the learning dispatcher follows Algorithm \ref{alg:Alg3} and can compute the threshold learned by the learning dispatcher. This alternating genie-aided dispatcher is coupled with the learning dispatcher under the coupling described in Section \ref{coupling}. \Added{Moreover, when a customer arrives, having seen the realization of $B^j$, this genie-aided dispatcher is aware of whether this customer arrives during a phase $1$ or $2$ of the learning system, and would pick the proper threshold to use when this customer initiates a busy cycle.} 
	
	Recall that $K_{i}$ denotes the threshold used by the learning system at the arrival of the $i^{th}$ customer. Following similar notation as in Section \ref{section:ProblemSetUp} for the alternating genie-aided dispatcher,  let $\tilde{K}_i$ denote the threshold policy used at the arrival of the $i^{th}$ customer, $\tilde{Q}_i$ denote the queue-length right before the arrival of the $i^{th}$ customer, $\tilde{Q}(t)$ denote the queue-length at time $t$,  $\tau^B_n$ denote the time of the beginning of the $n^{th}$ busy cycle,  $\tilde{N}_A(\tau^B_n)$ denote the index of the arrival customer who arrives at the beginning of the $n^{th}$ busy cycle, $\tilde{N}(t)$ denote the total number of completed busy cycles up to time $t$, and $\tilde{K}^n$ denote the threshold used during the $n^{th}$ busy cycle; note that $\tau_1^B=0$. 
 At the beginning of each busy cycle, the alternating genie-aided dispatcher then chooses a threshold $\tilde{K}^n \in \{\bar{K},\bar{K}-1\}$, where we have
\Added{
 \begin{align}\label{eq:SwitchRule}
		 {\small \tilde{K}^n =
			\begin{cases}
			\bar{K}-1, & \text{if } n=1, \\
			\bar{K}-1, & \text{if } n>1 \text{ and } \{K_{\tilde{N}_A(\tau^B_n)} \leq \bar{K}-1 \text{ OR customer }\tilde{N}_A(\tau^B_n) \text{ arrives during phase $1$} \},\\
			\bar{K}, & \text{if } n>1 \text{ and } \{K_{\tilde{N}_A(\tau^B_n)}\geq \bar{K} \text{ AND customer }\tilde{N}_A(\tau^B_n) \text{ arrives during phase $2$}  \}. 
		\end{cases}
	}
\end{align}
}
\Added{
That is, when the customer who initiates a busy cycle in the genie-aided system arrives during phase $1$ of the learning system, the genie-aided dispatcher uses threshold $\bar{K}-1$ in the initiated busy cycle. When the customer arrives during phase $2$, in the initiated busy cycle, the genie-aided dispatcher uses a threshold from $\{ \bar{K}, \bar{K}-1\}$ that is closer to the threshold used by the learning system. This threshold choice would help to preserve the queue-lengths ordering under desired events, as explained in subsection \ref{subsection:IntCaseRegret}.}
 In other words, for customers $i_1$ and  $i_2$  who arrive during the $n^{th}$ busy cycle, i.e., $\tilde{N}_A(\tau^B_n)\leq i_1<i_2< \tilde{N}_A(\tau^B_{n+1})$, we have $\tilde{K}_{i_1} = \tilde{K}_{i_2} = \tilde{K}^n$. This switching policy is adapted to the filtration generated by the queue-lengths of the genie-aided and learning systems. Since the learning algorithm always has the first exploration phase, we set \Added{$\tilde{K}^1 = \bar{K}-1$.}
			
	The following proposition shows the optimality of the alternating genie-aided dispatcher described above using the strong law of large numbers for martingales.
	\begin{prop}\label{prop:AnotherGenieOpt}
		Consider a dispatcher who uses a  static threshold policy, either $\bar{K}$ or $\bar{K}-1$, during a busy cycle, and may switch between these two thresholds only at the beginning of a busy cycle following  the switching rule described in \eqref{eq:SwitchRule}. 
		The long-term average profit of the system under this dispatcher is the same as a dispatcher using either one of the static thresholds $\bar{K}$ or $\bar{K} - 1$. 
	\end{prop}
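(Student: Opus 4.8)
The plan is to exploit the regenerative structure of busy cycles together with the strong law of large numbers for martingales. Write $\mathcal{T}_K$ and $\mathcal{R}_K$ for the expected length and the expected net profit (reward for customers served minus holding cost) of one busy cycle of an $M/M/1/K$ queue with parameters $\mu,\lambda$, for $K\in\{\bar K,\bar K-1\}$. By the renewal--reward theorem the long-term average profit of the static threshold-$K$ policy equals $\mathcal{R}_K/\mathcal{T}_K$, and since \eqref{eq:ThresholdIneq} holds with equality, both $\bar K$ and $\bar K-1$ are optimal for \eqref{def:Long-termAverageProfit}, so $\mathcal{R}_{\bar K}/\mathcal{T}_{\bar K}=\mathcal{R}_{\bar K-1}/\mathcal{T}_{\bar K-1}=:g^{\ast}$. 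Because $K$ ranges over only two values and each $M/M/1/K$ busy period is the absorption time of a finite-state continuous-time Markov chain, the busy-cycle length and the number of customers served in a busy cycle have finite moments of every order, bounded uniformly over $K\in\{\bar K,\bar K-1\}$. (When $\bar K=1$, the threshold $\bar K-1=0$ admits no customer, so $g^{\ast}=0$ and there is nothing to prove for that threshold; the argument below applies verbatim to the cycles that use threshold $1$.)

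Next I would build the martingale. Let $\mathcal{F}_n:=\mathcal{F}_{\tau^B_n}$ denote the history up to the start of the $n^{\mathrm{th}}$ busy cycle (the $\sigma$-algebra generated by the queue-lengths of both systems and by $\{B^j\}$ up to time $\tau^B_n$); the switching rule \eqref{eq:SwitchRule} makes $\tilde K^n$ measurable with respect to $\mathcal{F}_n$. At $\tau^B_n$ the genie-aided queue is empty, and the increments of $N_A$ and of the potential-departure process $P$ after $\tau^B_n$ are independent of $\mathcal{F}_n$; hence, conditionally on $\{\tilde K^n=K\}$, the net profit $X_n$ and the length $L_n:=\tau^B_{n+1}-\tau^B_n$ of the $n^{\mathrm{th}}$ busy cycle are distributed exactly as one busy cycle of the $M/M/1/K$ queue. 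Consequently $\mathbb{E}[X_n\mid\mathcal{F}_n]=\mathcal{R}_{\tilde K^n}$ and $\mathbb{E}[L_n\mid\mathcal{F}_n]=\mathcal{T}_{\tilde K^n}$, so
\begin{align*}
\mathbb{E}\!\left[X_n-g^{\ast} L_n\,\middle|\,\mathcal{F}_n\right]=\mathcal{R}_{\tilde K^n}-g^{\ast}\mathcal{T}_{\tilde K^n}=0,
\end{align*}
and $M_n:=\sum_{k=1}^n(X_k-g^{\ast} L_k)$ is a zero-mean $(\mathcal{F}_n)$-martingale with conditional second moments bounded uniformly in $n$. By the strong law of large numbers for martingales, $M_n/n\to 0$ a.s.; the identical argument gives $\tfrac1n\sum_{k=1}^n\bigl(L_k-\mathcal{T}_{\tilde K^k}\bigr)\to 0$ a.s.

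Finally I would transfer this to continuous time. Since $\mathcal{T}_{\tilde K^k}\in[\mathcal{T}_{\min},\mathcal{T}_{\max}]$ with $0<\mathcal{T}_{\min}\le\mathcal{T}_{\max}<\infty$, the second limit forces $\tau^B_n/n$ to be eventually bounded above and below by positive constants; hence the number of completed busy cycles $n(t):=\tilde N(t)$ tends to infinity with $n(t)/t$ bounded, and (Chebyshev together with Borel--Cantelli, using the uniform bound on $\mathbb{E}[L_k^2\mid\mathcal{F}_k]$) both $L_{n(t)}/t\to 0$ and $(\text{customers served in cycle }n(t))/t\to 0$ a.s. Writing $\Pi(t)$ for the accumulated net profit and using $\Pi(\tau^B_m)-g^{\ast}\tau^B_m=M_{m-1}$ (recall $\tau^B_1=0$),
\begin{align*}
\Pi(t)-g^{\ast} t=\bigl(\Pi(t)-\Pi(\tau^B_{n(t)})\bigr)+M_{n(t)-1}+g^{\ast}\bigl(\tau^B_{n(t)}-t\bigr),
\end{align*}
and each term divided by $t$ tends to $0$ a.s.: the first is bounded by $R\cdot(\text{served in cycle }n(t))+C\bar K\,L_{n(t)}$, the second equals $(M_{n(t)-1}/n(t))(n(t)/t)$, and the third is at most $g^{\ast}L_{n(t)}$ in absolute value. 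Hence $\Pi(t)/t\to g^{\ast}$ a.s., and since $|\Pi(t)|/t\le R\,N_A(t)/t+C\bar K$ with $N_A(t)/t$ uniformly integrable, also $\mathbb{E}[\Pi(t)]/t\to g^{\ast}$. As $g^{\ast}$ is the long-term average profit of the static threshold-$\bar K$ (and threshold-$(\bar K-1)$) policy, the claim follows. The main obstacle is the regeneration step: one must verify that $\tilde K^n$, although it depends on the learning system's past service times, is $\mathcal{F}_{\tau^B_n}$-measurable and therefore independent of the post-$\tau^B_n$ arrival and potential-departure increments that drive the $n^{\mathrm{th}}$ busy cycle, so that this cycle is indeed a fresh $M/M/1/\tilde K^n$ cycle; everything after that is bookkeeping around the martingale strong law.
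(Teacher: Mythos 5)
Your proof is correct and follows essentially the same route as the paper: decompose by busy cycles, note that conditionally on $\tilde K^n$ the cycle $(X_n,L_n)$ is a fresh $M/M/1/\tilde K^n$ cycle because $\tilde K^n$ is $\mathcal{F}_{\tau^B_n}$-measurable and the post-$\tau^B_n$ increments of the arrival and potential-departure processes are independent of $\mathcal{F}_{\tau^B_n}$, use the renewal--reward identity to get $\mathbb{E}[X_n - g^\ast L_n \mid \mathcal{F}_n]=0$, apply the martingale strong law of large numbers, and then control the denominator $\tau^B_n/n$. The one genuine divergence is the denominator control: you deduce that $\tau^B_n/n$ is eventually sandwiched between positive constants by applying the martingale SLLN a second time to $L_k - \mathcal{T}_{\tilde K^k}$ and observing that $\mathcal{T}_{\tilde K^k}$ ranges over only two finite positive values, whereas the paper instead invokes the monotone coupling of Proposition~\ref{prop:OrderedSystems} to sandwich $\sum_n \mathcal{B}_n$ pathwise between the cumulative busy-cycle lengths of two coupled systems with static thresholds $\bar K$ and $\bar K-1$, for which the classical renewal SLLN applies directly. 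Both are valid; yours is slightly more self-contained since it reuses the same martingale tool rather than bringing in the coupling, while the paper leans on machinery it has already built. Your treatment of the edge effects (the first cycle, which starts from a possibly non-generic initial state, and the incomplete cycle at time $t$) is handled somewhat differently but reaches the same conclusion; the paper splits off the first cycle and the tail term explicitly and argues each contributes $0$ to the $\liminf$, whereas you bound them via the moment control, and both are fine.
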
 
	\begin{proof}
		Assume the initial queue-length is some $a\in \{0,1,\dotsc,\bar{K}\}$, where the particular value doesn't impact the asymptotic results. 
		We are interested in finding: 
			\begin{align}
					& \liminf_{t \rightarrow \infty} \frac{1}{t} \left(aR+\sum_{i= 1}^{\tilde{N}_A(t)} R \mathbbm{1}_{\{\tilde{Q}_i \leq \tilde{K}_i\} } - \int_{0}^{t}C\tilde{Q}(u)du\right) \nonumber\\
					& = \liminf_{t \rightarrow \infty} \frac{1}{t} \left(aR+\sum_{i = 1}^{\tilde{N}_A(\tau^B_{2})-1} R\mathbbm{1}_{\{\tilde{Q}_i \leq \tilde{K}^1\}}-\int_0^{\tau_2^B} C \tilde{Q}(u)du \right) \nonumber\\
     & \quad\quad\quad +\liminf_{t \rightarrow \infty} \frac{1}{t} \left(\sum_{n= 2}^{\tilde{N}(t)} \left(\sum_{i = \tilde{N}_A(\tau^B_n)}^{\tilde{N}_A(\tau^B_{n+1})-1} R\mathbbm{1}_{\{\tilde{Q}_i \leq \tilde{K}^n\}} - \int_{\tau^B_n}^{\tau^B_{n+1}}C\tilde{Q}(u)du\right) \right)\nonumber\\
					&\quad\quad\quad+ \liminf_{t \rightarrow \infty}\frac{1}{t} \left(\sum_{i = \tilde{N}_A\left(\tau^B_{\tilde{N}(t)+1}\right)}^{\tilde{N}_A(t)} R \mathbbm{1}_{\{ \tilde{Q}_i\leq \tilde{K}^{\tilde{N}(t)+1}\}} - \int_{\tau^B_{\tilde{N}(t)+1}}^{t}C\tilde{Q}(u)du\right). 
     		\label{ergowelfare}
			\end{align}
   Let the tuple $(X_n,\mathcal{B}_n)$ denote the total net profit and duration of the $n^{th}$ busy cycle under this dispatcher. 
   For the first busy cycle, we have:
	  \begin{align*}
	  		X_1 &:=aR+ \sum_{i = 1}^{\tilde{N}_A(\tau^B_{2})-1} R\mathbbm{1}_{\{\tilde{Q}_i \leq \tilde{K}^1\}} - \int_{0}^{\tau^B_{2}}C\tilde{Q}(u)du,\text{ and }
	  		\mathcal{B}_1 :=  \tau^B_{2}.
	  \end{align*}
  For $n \geq 2$, we have:
	\begin{align*}
		X_n &:= 	\sum_{i = \tilde{N}_A(\tau_n)}^{\tilde{N}_A(\tau^B_{n+1})-1} R\mathbbm{1}_{\{\tilde{Q}_i \leq \tilde{K}^n\}} - \int_{\tau^B_n}^{\tau^B_{n+1}}C\tilde{Q}(u)du,\text{ and }
		\mathcal{B}_n := \tau^B_{n+1} - \tau^B_{n}.
	\end{align*} 
	We can rewrite \eqref{ergowelfare} as: 
	\begin{align*}
		\liminf_{t \rightarrow \infty} \frac{1}{t} \sum_{n = 2}^{\tilde{N}(t)} X_n + \liminf_{t \rightarrow \infty}\frac{1}{t} \left(X_1 + \sum_{i = \tilde{N}_A(\tau^B_{\tilde{N}(t)+1})}^{\tilde{N}_A(t)} R \mathbbm{1}_{\{ \tilde{Q}_i\leq \tilde{K}^{\tilde{N}(t)+1}\}} - \int_{\tau^B_{\tilde{N}(t)+1}}^{t}C\tilde{Q}(u)du\right). 
	\end{align*}
When the initial queue-length is finite, $\mathbb{E}\left[ \mathcal{B}_1\right]$ and $\mathbb{E}[\left( \mathcal{B}_1\right)^2]$ are finite; see \cite{Takagi2009}.
	
	Let $(Y_n^{\bar{K}}, \mathcal{B}_n^{\bar{K}})$ denote the total net profit and the duration of the $n^{th}$ busy cycle of a dispatcher that uses static threshold $\bar{K}$ and with initial queue-length $1$, and let  $\mathcal{Y}^{\bar{K}}(t)$ denote the accumulated total net profit of this dispatcher up to time $t$. 
 Setting the initial queue-length to $1$ is owing to a generic busy cycle starting as such. The random variables $(Y_n^{\bar{K}}, \mathcal{B}_n^{\bar{K}})$ are  \emph{i.i.d.}, and $\mathcal{Y}^{\bar{K}}(t)$ is a renewal reward process: see \cite[Section~3.1]{Durrett}. Similarly, we can define $(Y_n^{\bar{K}-1}, \mathcal{B}_n^{\bar{K}-1})$  and $\mathcal{Y}^{\bar{K}-1}(t)$ for a dispatcher that uses static threshold $\bar{K}-1$. 
 \cite{Naor} showed that there exists a constant $\mathcal{O}$ denoting the optimal long-term average profit of the dispatcher, where with probability 1, 
\begin{align*}
	\lim_{t \rightarrow \infty} \frac{1}{t}\mathcal{Y}^{\bar{K}}(t) = \lim_{t \rightarrow \infty} \frac{1}{t}\mathcal{Y}^{\bar{K}-1}(t) =\mathcal{O}.
\end{align*}
By the renewal-reward theorem, \cite[Section~3.1]{Durrett}, we have:
\begin{align*}
	\mathbb{E}\left[Y_1^{\bar{K}}\right] =  \mathbb{E}\left[\mathcal{B}_1^{\bar{K}}\right] \mathcal{O},  \quad\text{ and }\quad \mathbb{E}\left[Y_1^{\bar{K}-1}\right] =  \mathbb{E}\left[\mathcal{B}_1^{\bar{K}-1}\right] \mathcal{O}. 
\end{align*}

Let $\tilde{\mathcal{F}}_{n-1}:=\tilde{\mathcal{F}}_{\tau_n} $ denote the sigma-algebra generated by the queue-length process of the coupled learning dispatcher and the dispatcher described in Proposition \ref{prop:AnotherGenieOpt} up to time $\tau^B_n$ (the end of the $(n-1)^{th}$ busy cycle of the dispatcher described in Proposition \ref{prop:AnotherGenieOpt}). By the independence of the Poisson arrival and Poisson potential service process, the distribution of $(X_n, \mathcal{B}_n)$ conditioned on $\tilde{\mathcal{F}}_{n-1}$  is the same as the distribution of  $(X_n, \mathcal{B}_n)$ conditioned on the filtration generated by $\tilde{K}^n$.  Moreover, for $n\geq 2$, $(X_n, \mathcal{B}_n)$ conditioned on the event $\{ \tilde{K}^n = \bar{K}\}$ has the same distribution as $(Y_1^{\bar{K}} , \mathcal{B}_{1}^{\bar{K}})$ and $(X_n, \mathcal{B}_n)$ conditional on the event $\{ \tilde{K}^n = \bar{K}-1\}$ has the same distribution as $(Y_1^{\bar{K}-1} , \mathcal{B}_{1}^{\bar{K}-1})$. Using these, for $i\geq 2$, we have: 
	\begin{align*}
		\mathbb{E}\left[ \mathcal{B}_n\right] &= \mathbb{E}\left[\mathcal{B}_n \bigg\rvert \tilde{K}^n = \bar{K}  \right]\mathbb{P}\left[ \tilde{K}^n = \bar{K} \right]+ \mathbb{E}\left[\mathcal{B}_n \bigg\rvert \tilde{K}^n = \bar{K} -1 \right]\mathbb{P}\left[ \tilde{K}^n = \bar{K}-1 \right]\\
		 &=   \mathbb{E}\left[\mathcal{B}^{\bar{K}}_1 \right]\mathbb{P}\left[\tilde{K}^n = \bar{K}\right]+  \mathbb{E}\left[\mathcal{B}^{\bar{K}-1}_1\right]\mathbb{P}\left[\tilde{K}^n = \bar{K}-1\right], 
	\end{align*}
and similarly,
\begin{align*}
	\mathbb{E}\left[ (\mathcal{B}_n)^2\right] &=  \mathbb{E}\left[ (\mathcal{B}_n)^2 \bigg\rvert \tilde{K}^n = \bar{K}\right]\mathbb{P}\left[ \tilde{K}^n = \bar{K}\right] +\mathbb{E}\left[ (\mathcal{B}_n)^2 \bigg\rvert  \tilde{K}^n = \bar{K}-1 \right] \mathbb{P}\left[  \tilde{K}^n = \bar{K}-1 \right] \\
	& = \mathbb{E}\left[ (\mathcal{B}_1^{\bar{K}})^2 \right]\mathbb{P}\left[ \tilde{K}^n = \bar{K}\right] +\mathbb{E}\left[ (\mathcal{B}_1^{\bar{K}-1})^2 \right] \mathbb{P}\left[  \tilde{K}^n = \bar{K}-1 \right].
\end{align*}
Both $\mathcal{B}_1^{\bar{K}}$ and $\mathcal{B}_1^{\bar{K}-1}$ have  finite first and second moments, \cite{Takagi2009}, and thus, so does $\mathcal{B}_i$. 

Let $\tilde{N}_{\mathrm{join}}^n$ denote the number of the customers joining the queue during the $n^{th}$ busy cycle under the dispatching policy described in Proposition \ref{prop:AnotherGenieOpt}. 
Observe that the total number of arrivals joining the queue and services are equal during a busy cycle except for the first one for which there are exactly $a$ more service completions than the number of customers joining the queue during the first busy cycle. 
When there are at least $\bar{K}$ potential services between two consecutive arrivals, the queue-length under the dispatcher described in Proposition \ref{prop:AnotherGenieOpt} hits $0$ and a busy period ends. Therefore, for any integer $M$, 
we have: 
\begin{align*}
	\mathbb{P}\left[\tilde{N}^n_{\mathrm{join}} > M \right] \leq  \left(1-\left(\frac{\mu}{\lambda+\mu}\right)^{\bar{K}}\right)^M, 
\end{align*} 
which then implies that the random variable $\tilde{N}^i_J$ has finite first and second moments. 

Since $\rvert X_n \rvert \leq R\tilde{N}^n_{\mathrm{join}} + C\bar{K}\mathcal{B}_n$, a.s., for all $n\geq2$, and  $\rvert X_1 \rvert \leq R\tilde{N}^1_{\mathrm{join}} + aR+ C\bar{K}\mathcal{B}_1$ a.s., we can conclude that $X_n$ also has finite first and second moments, and it is clear that with probability $1$, 
$$\liminf_{t \rightarrow \infty}\frac{1}{t} \left(X_1+ \sum_{n =\tilde{N}_A\left(\tau^B_{\tilde{N}(t)+1}\right)}^{\tilde{N}_A(t)} R \mathbbm{1}_{\{ \tilde{Q}_i\leq \tilde{K}^{\tilde{N}(t)+1}\}} - \int_{\tau^B_{\tilde{N}(t)+1}}^{t}C\tilde{Q}(u)du \right)= 0.$$
	For almost every sample path, there exists $t^*$ such that $\tilde{N}(t)>1$ for all $t\geq t^*$, and we have the following upper and lower bounds with probability $1$: 
	\begin{align*}
		 \liminf_{t \rightarrow \infty}  \frac{1}{\sum_{n= 1 }^{\tilde{N}(t)+1} \mathcal{B}_i} \sum_{n = 2}^{\tilde{N}(t)} X_n  \leq \liminf_{t \rightarrow \infty}  \frac{1}{t} \sum_{n = 2}^{\tilde{N}(t)} X_n\leq \liminf_{t \rightarrow \infty}  \frac{1}{\sum_{n= 2}^{\tilde{N}(t)} \mathcal{B}_n} \sum_{i = 2}^{\tilde{N}(t)} X_n. 
	\end{align*}
	We show $ \liminf_{t \rightarrow \infty}  (1/t) \sum_{n = 2}^{\tilde{N}(t)} X_n = \mathcal{O}$ a.s. by showing that with probability 1, both 
	\begin{align}
	 & \liminf_{t \rightarrow \infty}  \frac{1}{\sum_{n= 1}^{\tilde{N}(t)+1} \mathcal{B}_n} \sum_{n = 2}^{\tilde{N}(t)} X_n  = \mathcal{O}, \text{ and}	 \label{limitprop5.2lower}\\
		 &\liminf_{t \rightarrow \infty}  \frac{1}{\sum_{n= 2}^{\tilde{N}(t)} \mathcal{B}_n} \sum_{n = 2}^{\tilde{N}(t)} X_n  = \mathcal{O}. \label{limitprop5.2upper}
	\end{align}
	Note that we have: 
	\begin{align*}
		 \liminf_{t \rightarrow \infty}  \frac{1}{\sum_{n= 1}^{\tilde{N}(t)+1} \mathcal{B}_n} \sum_{n = 2}^{\tilde{N}(t)} X_n  & = \liminf_{t \rightarrow \infty}  \frac{\sum_{n= 2}^{\tilde{N}(t)} \mathcal{B}_n}{\sum_{n= 1}^{\tilde{N}(t)+1} \mathcal{B}_n} \frac{1}{\sum_{n= 2}^{\tilde{N}(t)} \mathcal{B}_n}\sum_{n = 2}^{\tilde{N}(t)} X_n \\
		& = \liminf_{t \rightarrow \infty} \frac{\tilde{N}(t)+1}{\sum_{n= 1}^{\tilde{N}(t)+1} \mathcal{B}_n} \times \frac{\sum_{n= 2}^{\tilde{N}(t)} \mathcal{B}_n}{\tilde{N}(t)-1} \times \frac{\tilde{N}(t)-1}{\tilde{N}(t)+1} \times \frac{1}{\sum_{n= 2}^{\tilde{N}(t)} \mathcal{B}_n}\sum_{n = 2}^{\tilde{N}(t)} X_n.
	\end{align*}
We can also  rewrite \eqref{limitprop5.2upper} as 
\begin{align*}
	\liminf_{n \rightarrow \infty}  \frac{\tilde{N}(t)-1}{\sum_{n = 2}^{\tilde{N}(t)}\mathcal{B}_n} \frac{1}{\tilde{N}(t)-1}\sum_{n = 2}^{\tilde{N}(t)} \left(X_n -\mathcal{B}_n\mathcal{O} \right) =0.
\end{align*}
Note that $\lim_{t  \rightarrow \infty} \tilde{N}(t) = \infty$ and $\lim_{t  \rightarrow \infty} \sum_{n = 2}^{\tilde{N}(t)}\mathcal{B}_n = \infty$ a.s., which in turn imply that a.s. we have:
 \begin{align*}
 	\liminf_{t \rightarrow \infty}\frac{\tilde{N}(t)+1}{\sum^{\tilde{N}(t)+1}_{n = 1} \mathcal{B}_n} =\liminf _{k\rightarrow \infty}\frac{k}{\sum_{n= 1}^{k} \mathcal{B}_n}  =\liminf_{t \rightarrow \infty}\frac{\tilde{N}(t)-1}{\sum^{\tilde{N}(t)}_{n = 2} \mathcal{B}_n} \text{ and  } \lim_{t \rightarrow \infty} \frac{\tilde{N}(t)-1}{\tilde{N}(t)+1} = \lim_{k \rightarrow \infty}\frac{k-1}{k+1} = 1 .
 \end{align*}
Then, in order to establish \eqref{limitprop5.2lower} and \eqref{limitprop5.2upper}, it is sufficient to show that with probability 1,
\begin{align}
	& \liminf_{k \rightarrow \infty}  \frac{1}{k-1} \sum_{n = 2}^{k}\left(X_n   - \mathcal{B}_n \mathcal{O} \right)  = 0, \text{ and} \label{limitisO}\\
	0< & \liminf_{k\rightarrow \infty} \frac{k}{\sum_{n= 1}^{k}\mathcal{B}_n } \leq \limsup_{k\rightarrow \infty} \frac{k}{\sum_{n= 1}^{k}\mathcal{B}_n}  <\infty. \label{limitbounded}
\end{align}

We will prove \eqref{limitisO} by using the strong law of large numbers for martingales~\cite[Theorem 1]{mtgLLN}. 
Let $M_k = \sum_{n = 2}^{k} \left(X_n -\mathcal{B}_n\mathcal{O} \right)$ for $k\geq 2$, $M_1 = 0$. Clearly $\mathbb{E}\left[\rvert M_k\rvert\right]<\infty$ for all $k$. Also, 
\begin{align}
	\mathbb{E}\left[M_{k+1} - M_k \Big\rvert \tilde{\mathcal{F}_{k}}\right] 
	& = \mathbb{E}\left[X_{k+1}   - \mathcal{B}_{k+1}\mathcal{O} \Big\rvert \tilde{\mathcal{F}_{k}}\right] \nonumber\\
	& = \mathbb{E}\left[X_{k+1}   - \mathcal{B}_{k+1}\mathcal{O} \Big\rvert \tilde{K}^k \right] \nonumber\\
	& = \mathbbm{1}_{\{\tilde{K}^{k+1} = \bar{K}\}}\mathbb{E}\left[Y^{\bar{K}}_{1} - \mathcal{B}^{\bar{K}}_{1}\mathcal{O}\right]+ \mathbbm{1}_{\{\tilde{K}^{k+1} = \bar{K}-1\}}\mathbb{E}\left[Y^{\bar{K}-1}_{1} - \mathcal{B}^{\bar{K}-1}_{1}\mathcal{O} \right]= 0.
\end{align} 
The second equality follows since the distribution of $(X_n, \mathcal{B}_n)$ conditioned on $\tilde{\mathcal{F}}_{n-1}$ is the same as the distribution of $(X_n, \mathcal{B}_n)$ conditioned on the filtration generated by $\tilde{K}^n$ for all $n\geq 2$. Therefore, we have shown that $M_k$ is a martingale with respect to filtration $\{\tilde{\mathcal{F}}_k\}_{k\geq 1}$ with martingale difference sequence $X_{k} - \mathcal{B}_k\mathcal{O}$ for $k\geq 2$.

Next, we will show that $\sum_{k = 2}^{\infty } k^{-2}\mathbb{E} \left[(X_{k} - \mathcal{B}_k \mathcal{O})^2\right]$ is finite. For $k\geq 2$, we have:
\begin{align*}
	\mathbb{E}\left[ (X_{k} - \mathcal{B}_k\mathcal{O})^2\right] 
	& = \mathbb{E}\left[ \left( \sum_{i = \tilde{N}_A(\tau^B_k)}^{\tilde{N}_A(\tau^B_{k+1})-1} R\mathbbm{1}_{\{\tilde{Q}_i \leq \tilde{K}^k\}} - \int_{\tau^B_k}^{\tau^B_{k+1}}C\tilde{Q}(u)du - \mathcal{B}_n \mathcal{O}\right)^2\right]\\
	&\leq  \mathbb{E}\left[ \left( \sum_{i = \tilde{N}_A(\tau^B_k)}^{\tilde{N}_A(\tau^B_{k+1})-1} R\mathbbm{1}_{\{\tilde{Q}_i \leq \tilde{K}^k\}} \right)^2+ \left( \int_{\tau^B_k}^{\tau^B_{k+1}}C\tilde{Q}(u)du + \mathcal{B}_n O\right)^2\right]\\
	&\leq \mathbb{E}\left[ R^2(\tilde{N}_\mathrm{join}^k)^2+ (\mathcal{B}_k )^2(\mathcal{O}+C\bar{K})^2 \right], 
\end{align*}
where we recall that $\tilde{N}_\mathrm{join}^k$ denotes the customers joining the queue during the $k^{th}$ busy cycle, and $\mathcal{B}_k = \tau^B_{k+1} - \tau^B_{k}$ is the duration of the $k^{th}$ busy cycle. 
When $k\geq 2$, both $\tilde{N}_\mathrm{join}^k$ and $\mathcal{B}_k$ have finite second moments that do not depend on $k$, so that $\sum_{k = 2}^{\infty } k^{-2}\mathbb{E} \left[(X_{k} - \mathcal{B}_k \mathcal{O})^2\right]<\infty.$
Therefore, by the strong law of large numbers for martingales~\cite[Theorem 1]{mtgLLN}, \eqref{limitisO} holds.

Next, we prove \eqref{limitbounded}. Consider a dispatcher that uses the  static threshold policy $\bar{K}$, which is coupled with the dispatcher described in Proposition \ref{prop:AnotherGenieOpt}, and also has initial queue-length $a$. The duration of the $n^{th}$ busy cycles of this dispatcher is denoted  $\tilde{\mathcal{B}}^{\bar{K}}_n$. 
The random variables $\tilde{\mathcal{B}}^{\bar{K}}_n$s are \emph{i.i.d.}~for all $n\geq 2$. Although having a different distribution, $\tilde{\mathcal{B}}^{\bar{K}}_1$ is independent of $\tilde{\mathcal{B}}^{\bar{K}}_n$ for all $n\geq 2$. 

Using Proposition \ref{prop:OrderedSystems}, observe that on any sample path, when the dispatcher that uses the static threshold $\bar{K}$ has experienced  $k$ busy periods, the dispatcher described in Proposition \ref{prop:AnotherGenieOpt} would have experienced more than $k$ busy periods. Thus, we can conclude that, with probability $1$, 
$$\sum_{n = 1}^{k}\tilde{\mathcal{B}}^{\bar{K}}_i \geq \sum_{n = 1}^{k}\mathcal{B}_k,$$ for all $k$. Moreover, since $\mathcal{B}^{\bar{K}}_n$s have finite first moments, \cite{Takagi2009}, and are non-negative, they are finite a.s. Therefore, $\lim_{k\rightarrow \infty} k/\sum_{n = 1}^{k}\tilde{\mathcal{B}}^{\bar{K}}_n  = 1/\mathbb{E}[ \mathcal{B}^{\bar{K}}_2]$ exists a.s. and is strictly positive. 
Therefore, with probability $1$, we have:
 \begin{align*}
 	\liminf_{k \rightarrow \infty} \frac{k}{\sum_{n= 1}^{k}\mathcal{B}_n } \geq \lim_{k \rightarrow \infty} \frac{k}{\sum_{n = 1}^{k}\tilde{\mathcal{B}}^{\bar{K}}_n } =\frac{1}{\mathbb{E}[ \mathcal{B}^{\bar{K}}_2]}>0.
\end{align*} 
Similarly, comparing with the dispatcher using static threshold policy $\bar{K}-1$ that is coupled with the genie-aided dispatcher described in Proposition \ref{prop:AnotherGenieOpt}, with probability 1, we have:
\begin{align*}
		\limsup_{k \rightarrow \infty} \frac{k}{\sum_{n = 1}^{k}\mathcal{B}_n} \leq \lim_{k\rightarrow \infty} \frac{k}{\sum_{n = 1}^{k}\tilde{\mathcal{B}}^{\bar{K}-1}_n } = \frac{1}{\mathbb{E}[ \mathcal{B}^{\bar{K}-1}_2]} <\infty.
\end{align*}
The last two results imply \eqref{limitbounded}. Then, \eqref{limitbounded} and \eqref{limitisO} prove the desired result.
	\end{proof}

 \begin{remark}\label{rem:optimal}
  \Added{When there exists a unique optimal threshold policy, the definition of regret is straightforward and without any ambiguity. However, in the case where there are multiple optimal threshold policies, we need to define the regret with respect to one of the optimal policies. Proposition~\ref{prop:AnotherGenieOpt} shows that the alternating genie-aided system is asymptotically optimal for almost all sample paths in the sense that it achieves the same long-term average profit as the system that uses either static threshold $\bar{K}$ or $\bar{K}-1$ starting from the beginning. The {\rm total} net profit achieved by this alternating genie-aided system up to time $T$ is not necessarily equal to the total net profit achieved by the genie-aided system using static threshold $\bar{K}$ or $\bar{K}-1$. These three policies (including the two static policies) do not necessarily achieve the same net profit up to time $T$ on given sample paths of the arrival and service processes.  Note that by Propositoin~\ref{prop:OrderedSystems}, the net profit process of the alternating genie-aided system during any busy cycle is either the same as the gain of one of the systems using static thresholds $\bar{K}$ and $\bar{K}-1$ or the net profit during the busy cycle is no smaller than the gain in the system using the static threshold $\bar{K}$: consider the case that the alternating system switches from using threshold $\bar{K}-1$ to $\bar{K}$, and the queue-length hits $\bar{K}$ during the current busy cycle. This is the only case where the behavior of the alternating genie-aided system may be different from the two systems using a static threshold. However, during the time between the switch and the time that the queue length of the alternating system hits $\bar{K}$ in the current busy cycle, the queue-length of the system using threshold $\bar{K}$ is greater than or equal to the queue-length of the alternating system. Moreover, the number of customers being served is the same for these two systems (in the current busy cycle). A similar but opposite comparison can be made with the system using static threshold $\bar{K}-1$. In fact, the total net profit  achieved (as a function of time) by the two systems using the static thresholds $\bar{K}$ and $\bar{K}-1$, respectively, are not necessarily equal on given  sample paths of the arrival and service processes either. We expect that the difference between the net profit of pairs of such systems obeys a Central-Limit Theorem behavior (including a functional form of the Central-Limit Theorem) when appropriately normalized and scaled (in time). 

Take as a concrete example the situation where $\bar{K} = 1$ and $\bar{K}-1 = 0$ are both optimal thresholds and assume that the initial queue length is $0$ for both systems. Using the inequalities in  \ref{eq:ThresholdIneq}, we get that these two optimal thresholds only occur when $C/\mu = R$. The system that uses the static threshold $0$ does not admit any customers into the system, and clearly achieves a total net profit equal to $0$ for any time $T$. The system that uses the static threshold $1$ admits a customer in the queue if and only if the system is empty when this customer arrives. The busy periods of this system using the static threshold $1$ are exactly the periods when a single customer is served, and the expected net profit during any busy period of this system is $R - C/\mu = 0$. However, this does not imply that the total net profit up to time $T$ of the system using threshold $1$ is $0$. In fact, the difference of the total net profit between these two systems over the busy periods of the system using threshold $1$ is a sum of mean-zero random variables (with each random variable being $R-C\times S$ where $S\sim \mathrm{EXP}(\mu)$ is the service time of the customer-in-service), which, intuitively, will lead to the claimed Central-Limit Theorem behavior. Furthermore, by the (finite-time) Law of the Iterated Logarithm \cite{balsubramani2015sharp}, along (almost all) sample paths the difference of the total net profit of the two systems may grow at most as  $O(\sqrt{T\ln(\ln(T))})$ (with high probability).

 For this example, we can also carry out an explicit analysis of $\mathbb{E}[\mathcal{G}(t)]$, the expected total net profit  up to any time $t$ of the system using static threshold 1. With the assumption that the initial queue length is 0, it is easier to consider the busy cycle as the idle period together with the consecutive busy period. Let $(Y^1_n, \mathcal{B}^{1}_{n})$ denote the total net profit and the duration of the $n^{th}$ busy cycle of the dispatcher that uses threshold $1$.  
 As mentioned in the previous paragraph, $\mathbb{E}[Y^1_n] = 0$ for all $n$. The random variables $\mathcal{B}^1_n$ are \textit{i.i.d.} and have the same distribution as $A + S$, where  $A$ is an $\mathrm{EXP}(\lambda)$ random variable and $S$ is an $\mathrm{EXP}(\mu)$ random variable  independent of $A$. Let $N(t)$ denote the number of completed busy cycles until time $t$, $n(t) = \mathbb{E}\left[N(t)\right]$ denote the expected number of completed busy cycles up to time $t$, $\sigma_s(t)$ denote the residual service time of the current busy cycle at time $t$, and $\tau_{t} = \sum_{n = 1}^{N(t) +1} \mathcal{B}^1_n$ denote the end-time of the current busy cycle. Recalling that the reward $R$ is given to the dispatcher at each service completion, we have:
 \begin{align*}
     \mathbb{E}\left[\mathcal{G}(t)\right] = \mathbb{E}\left[\mathcal{G}(\tau_t)\right]   - R + C \mathbb{E}[\sigma_s(t)].
 \end{align*}
 Note that $n(t)$ is the renewal function of the associated (alternating) renewal process with renewal interval distributed the same as $A + S$. 
By standard renewal theory arguments, $n(t)$ is finite for all t, and $N(t)+1$ is a stopping time of the sequence $(Y^1_n, \mathbb{B}^1_n)$. Applying Wald's equality, we get 
\begin{align*}
    \mathbb{E}[\mathcal{G}(\tau_t)] = \mathbb{E}\left[\sum_{i = 1}^{N(t)+1} Y^1_i \right] = \mathbb{E}\left[N(t)+1\right]\mathbb{E}\left[Y^1_1\right] = 0.
\end{align*}
Note that the distribution of $\sigma_s(t)$ follows $\mathrm{EXP}(\mu)$: if at time t the busy period has not started yet, clearly the residual service time is an $\mathrm{EXP}(\mu)$ random variable. If there is a customer being served at time t, the busy cycle ends at the completion of this service. Using the memory-less property of exponential random variable, the residual service time is again an $\mathrm{EXP}(\mu)$ random variable. Then, using $ \mathbb{E}[\mathcal{G}(\tau_t)]=0$, we get:
\begin{align*}
    \mathbb{E}\left[\mathcal{G}(t)\right] = \mathbb{E}\left[\mathcal{G}(\tau_t)\right]   - R + C \mathbb{E}[\sigma_s(t)]
     = 0 - R + C/\mu = 0.
\end{align*}

Despite admitting a customer when the queue is empty, the expected net profit at any time is exactly $0$ for the dispatcher using static threshold $1$ when both $\bar{K}=1$ and $\bar{K}-1=0$ are optimal thresholds. We expect that a similar but more complicated computation using renewal theory (as the memory-less argument no longer holds for the busy period, which is now a phase-type distribution, plus we need to determine the remaining workload to be served) can be carried out for systems using threshold $\bar{K} >1$ and $\bar{K}-1>0$, when both are optimal thresholds. We expect that as $t \rightarrow \infty$, the expected total net profit of the two systems using static thresholds differ by at most a constant, and so is the difference of the expected total net profit of the alternating system and the two systems using a static threshold. These questions are outside the scope of the paper and are left for future research. 
}
\end{remark}

	\subsection{Regret analysis with respect to the alternating  genie-aided dispatcher. }\label{subsection:IntCaseRegret}
	In Proposition \ref{prop:AnotherGenieOpt} we proved that the alternating genie-aided dispatcher described in Section~\ref{subsection:IntCaseAnotherGenie} that uses $\bar{K}$ and $\bar{K}-1$ ``in favor" of the learning algorithm is  optimal for \eqref{def:Long-termAverageProfit}. Next, we bound the regret of the learning dispatcher when compared with this genie-aided dispatcher. 
	
	Recall from Section~\ref{subsection:IntCaseAnotherGenie} that $\tilde{K}_i$ denotes the threshold used by the alternating genie-aided dispatcher  at the arrival of the $i^{th}$ arriving customer. 
    Following \eqref{eq:RegretUpperBound3}, we have:
	\begin{align*}
		G(t) \leq \bigg(R+\frac{C}{\lambda}\bigg)\mathbb{E}\left[ \sum\limits_{i = 1}^{N_A(t)} \left\rvert \mathbbm{1}_{\{\tilde{Q}_i<\tilde{K}_i\}} -  \mathbbm{1}_{\{Q_i<K_i\}} \right\rvert + \rvert \tilde{Q}_i   - Q_i \rvert\right] .
	\end{align*} 
	\Added{
Similar to the earlier analysis, assuming that both systems start with the same initial queue-length, we use $\tilde{G}_1^j$ to denote the expected regret accumulated during the (potential) phase $1$ and the first time the queue is emptied in the consecutive phase $2$ for the $j^{th}$ batch. Again, we use $\tilde{G}_2^j$ to denote the expected regret accumulated in the remainder of (the phase $2$ of the) $j^{th}$ batch.
 
Set $\tilde{\mathcal{E}}^j_2:=\{K(j) = \bar{K}\}\cup\{K(j) = \bar{K}-1\}$. We will reuse the events $\mathcal{E}^j_1$ and $\mathcal{E}^j_3$ that were first introduced in Section \ref{section:UniqueThresh}. Recall that $\mathcal{E}^j_1$ denotes the event that phase $1$ of the $j^{th}$ batch happens, and $\mathcal{E}^j_3=\{ Q_{n^j} = \tilde{Q}_{n^j}\}$ denotes the event that at the beginning of the $j^{th}$ phase $2$ of the learning system, the queue-length of the two systems are the same.

 Only under the event $\mathcal{E}_1^j$ there is a regret contribution to $\tilde{G}^j_1$ (since otherwise phase $1$ of the $j^{th}$ batch is omitted, and the queue-length at the beginning of phase 2  is 0). Under the event $(\mathcal{E}^j_1)^c\cap \tilde{\mathcal{E}}^j_2 \cap \mathcal{E}^j_3$, there is no regret contribution to $\tilde{G}^j_2$: indeed, for this batch of customers, $\tilde{\mathcal{E}}^j_2$ ensures the learned threshold is either $\bar{K}$ or $\bar{K}-1$. The event $(\mathcal{E}^j_1)^c$ ensures that phase $1$ is omitted, so the queue-length at the beginning of this phase $2$ of the learning system is $0$. Moreover, $\mathcal{E}^j_3$ ensures that the queue-length of the alternating genie-aided system is also $0$ at this time, which means that the arrival of the first customer of this phase $2$ initiates a busy cycle for both systems. In this case, the alternating genie-aided system would pick the same threshold used as the learning system for all the busy cycles in this phase $2$. Both systems would make the same choices of admitting each arrival in this phase $2$, and the queue-length processes of the two systems would also coincide for the entire phase $2$. Under the event $\mathcal{E}^j_1\cap \tilde{\mathcal{E}}^j_2 \cap \mathcal{E}^j_3$, although phase $1$ happens, Proposition \ref{prop:OrderedSystems} tells us that the queue-length of the learning system at the end of phase $1$ is no smaller than the queue-length of the genie-aided system. The event $\tilde{\mathcal{E}}^j_2$ ensures that the threshold used by the learning system during the entire phase $2$ is no smaller than the threshold used by the genie-aided system (since the genie-aided system would be either using the same threshold as the learning system when a busy cycle is initiated by a customer who arrives during phase $2$ or using threshold $\bar{K}-1$ when a busy cycle is initiated by a customer who arrives during phase $1$), when the queue-length of the learning system hits $0$ for the first time after phase $1$, the queue-length of the genie-aided system also hits $0$. The next proposition gives a bound that holds in the current setting for the probability of $\left(\tilde{\mathcal{E}}^j_2 \cap \mathcal{E}^j_3\right)^c$. 
}
\Added{
\begin{prop}
	Fix $j\geq \ceil{e^{\bar{K}}}$. In case that $V(\bar{K},\mu,\lambda) = R/C$, we have the following:
		\begin{align*}
				\mathbb{P}\left[ \left( \tilde{\mathcal{E}}^j_2 \cap \mathcal{E}^j_3\right)^c\right]
				&\leq \tilde{C}_1\exp\Big(-\tilde{C}_2{\ln^{1 + \epsilon}(j)}\Big)  +\tilde{C}_1\exp\Big(-\tilde{C}_2\ln^{1 + \epsilon}(j-1)\Big)\\
				&\quad\quad+ \tilde{C}_3\exp\Big(-\tilde{C}_4\beta_j\Big)+\tilde{C}_3\exp\Big(-\tilde{C}_4\beta_{j-1}\Big)
				+ \big(c_{\bar{K}}\big)^{\alpha_{j-1}l_2}.
		\end{align*}
			$\tilde{C}_1$, $\tilde{C}_2$, $\tilde{C}_3$ and $\tilde{C}_4$ are defined in \eqref{def:ICc1c2} and \eqref{def:ICc3c4}, and $$c_{\bar{K}} :=1- \left(\frac{\mu}{\lambda+\mu}\right)^{\bar{K}}\in(0,1).$$
			
\end{prop}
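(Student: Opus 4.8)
The plan is to follow the same template as the proof of Proposition~\ref{prop:BadEventProb} in the case $\bar K>0$, substituting the event $\tilde{\mathcal{E}}^j_2$ for $\mathcal{E}^j_2$ and the estimate of Corollary~\ref{cor:ICSamePolicy} for that of Corollary~\ref{cor:SameThreshProb}. First I would reintroduce the auxiliary event $\mathcal{E}^j_4$ that the queue-length of the alternating genie-aided system reaches $0$ at some epoch during phase~$2$ of the $j^{\mathrm{th}}$ batch. Because the alternating dispatcher uses a threshold in $\{\bar K-1,\bar K\}$ on every busy cycle, its queue-length never exceeds $\bar K$; between two consecutive arrivals at least $\bar K$ potential services occur with probability $1-c_{\bar{K}}$, independently across arrival intervals; and phase~$2$ of the $j^{\mathrm{th}}$ batch contains at least $\alpha_j l_2$ arrivals. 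Hence $\mathbb{P}[(\mathcal{E}^j_4)^c]\le (c_{\bar{K}})^{\alpha_j l_2}$, exactly as in the unique-threshold case.

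The genuinely new step is to establish $\mathbb{P}\big[(\mathcal{E}^j_3)^c \mid \tilde{\mathcal{E}}^{j-1}_2\big]\le (c_{\bar{K}})^{\alpha_{j-1}l_2}$. On the event $\tilde{\mathcal{E}}^{j-1}_2$ the threshold $K(j-1)$ used by the learning system throughout phase~$2$ of batch $j-1$ lies in $\{\bar K-1,\bar K\}$, so by the switching rule~\eqref{eq:SwitchRule} every busy cycle of the alternating genie-aided system that is initiated by a customer arriving during that phase~$2$ uses exactly the threshold $K(j-1)$. Consequently, if $\mathcal{E}^{j-1}_4$ also occurs, then at the first epoch within phase~$2$ at which the genie-aided queue empties the learning queue is at least as large (being nonnegative), and from that epoch onward both systems run with the common threshold $K(j-1)$; applying the reasoning of Corollary~\ref{cor:OrderedSystems} (i.e.\ Proposition~\ref{prop:OrderedSystems}(1)) from that epoch on preserves the ordering for the remainder of phase~$2$. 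Since the learning queue is $0$ at the end of phase~$2$ by construction of Algorithm~\ref{alg:Alg3}, the genie-aided queue is $0$ there as well, i.e.\ $\mathcal{E}^j_3$ holds. Thus $(\mathcal{E}^j_3)^c\cap\tilde{\mathcal{E}}^{j-1}_2\subseteq(\mathcal{E}^{j-1}_4)^c$, which yields the conditional bound.

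Finally I would assemble the pieces. By De Morgan, $(\tilde{\mathcal{E}}^j_2\cap\mathcal{E}^j_3)^c=(\tilde{\mathcal{E}}^j_2)^c\cup(\mathcal{E}^j_3)^c$, and a union bound together with $\mathbb{P}[(\mathcal{E}^j_3)^c]\le \mathbb{P}[(\mathcal{E}^j_3)^c\mid\tilde{\mathcal{E}}^{j-1}_2]+\mathbb{P}[(\tilde{\mathcal{E}}^{j-1}_2)^c]$ gives
\[
\mathbb{P}\big[(\tilde{\mathcal{E}}^j_2\cap\mathcal{E}^j_3)^c\big]\le \mathbb{P}[(\tilde{\mathcal{E}}^j_2)^c]+\mathbb{P}[(\tilde{\mathcal{E}}^{j-1}_2)^c]+(c_{\bar{K}})^{\alpha_{j-1}l_2}.
\]
Applying Corollary~\ref{cor:ICSamePolicy} to the first two terms (valid for $j\ge\ceil{e^{\bar{K}}}$ and, for the $(j-1)^{\mathrm{th}}$ term, once $j-1\ge\ceil{e^{\bar{K}}}$ — the finitely many boundary batches being absorbed as in Remark~\ref{rem:RegretBeforeReachBound}) produces the stated inequality with the constants $\tilde C_1,\tilde C_2,\tilde C_3,\tilde C_4$ of~\eqref{def:ICc1c2}--\eqref{def:ICc3c4}. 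I expect the main obstacle to be the careful justification of the synchronization in the second step: one must check that, even though the alternating dispatcher may still be in a busy cycle initiated during phase~$1$ (where it uses $\bar K-1$) at the start of phase~$2$, the ordering preservation of Proposition~\ref{prop:OrderedSystems} applies precisely from the first epoch at which the genie-aided queue empties inside phase~$2$, and that the switching rule~\eqref{eq:SwitchRule} forces agreement of the genie-aided threshold with $K(j-1)$ on every busy cycle started from that epoch onward.
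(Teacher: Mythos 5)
Your proposal is correct and follows essentially the same logic the paper gestures at (the paper's proof is a one-liner deferring to Proposition~\ref{prop:BadEventProb}). You correctly identify and carry through the key modification: on $\tilde{\mathcal{E}}^{j-1}_2$ the switching rule~\eqref{eq:SwitchRule} forces the alternating dispatcher to match the learning system's threshold $K(j-1)\in\{\bar K-1,\bar K\}$ on every busy cycle initiated inside phase~$2$ of batch $j-1$, so once the genie queue first empties during that phase the ordering is preserved by Proposition~\ref{prop:OrderedSystems}(1), and $\mathcal{E}^j_3$ follows from the learning queue being empty at the end of that phase.
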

\begin{proof}
	The proof for both cases $\bar{K}>1$ and $\bar{K} = 1$ follows the same logic as in the case $\bar{K}>0$ in Proposition \ref{prop:BadEventProb}. 
\end{proof}
}
\Added{
Since we are using $l_1$, $K^*(j)$ and $\bar{K}$ to bound the queue-length in the proof of Lemmas \ref{lem:Phase1Reg} and \ref{lem:Phase2Reg}, these two lemmas still hold when the optimal threshold is not unique. 
It should be now clear that Theorem \ref{thm:Thm1} and Theorem \ref{thm:Thm2} also hold when equality holds in \eqref{eq:ThresholdIneq}.  


}

	\section{Simulation-based numerical results} \label{section:Numerical}
	In this section, we demonstrate the performance of our proposed Algorithm \ref{alg:Alg3} using simulations. To compute the regret we compare our algorithm to the genie-aided system that has the knowledge of the arrival and service rates and uses the optimal strategy proposed by 
    \cite{Naor}. For the simulations, we set the initial queue-length to be $0$ for both the genie-aided and learning systems. 
 For all numerical experiments, unless specified otherwise,  we use the following set of parameters:  $l_2 = 10$, $C = R = 1$, \Added{$\mathbb{E}[B^j] = \ln(j)/j$, $\alpha_j = j$ where recall that $l_2$ is the minimum length of phase $2$, $C$ is the cost per unit time, $R$ is the reward granted to the dispatcher when each service completes, $B^j$ is the random variable which controls the probability of having phase 1 when the threshold used in the previous phase 2 is 0, and $\alpha_j$ is the rate at which the minimum length of phase $2$ increases. Note that, unless specified otherwise, we use $\epsilon = 1$ 
 in $\mathbb{E}[B^j] = \ln^{\epsilon}(j)/j$. }
 We vary $\mu$ and $\lambda$ for different experiments, and explore zero and non-zero optimal threshold cases, as well as the cases where the optimal threshold is unique and when it is not unique. To show the pattern of the regret within a reasonable number of arriving customers, when the largest optimal threshold is $0$, we use $l_1 =1$ and when the largest optimal threshold is positive, we use $l_1 = 3$, where $l_1$ is the length of phase $1$ (when used), and stays unchanged for all batches. Our theoretical analysis holds for arbitrary choices of the constants $l_1\geq 1$. However, when $l_1$ is large and the service rate is small, it will take a long time for the queue to empty during phase $2$, and therefore, will require more arrivals to show the correct asymptotic behavior of the regret. 
 
 The finite-time performance of the simulated results agrees qualitatively with our upper bound: when an optimal strategy is to use threshold $0$, the learning system achieves an expected regret that grows in a sub-linear manner; and when all optimal strategies use a non-zero threshold, the learning system achieves an $O(1)$ expected regret. 

\begin{figure}[htbp]
	\centering
	\begin{subfigure}{.48\textwidth}
		\centering
		\includegraphics[width=\linewidth]{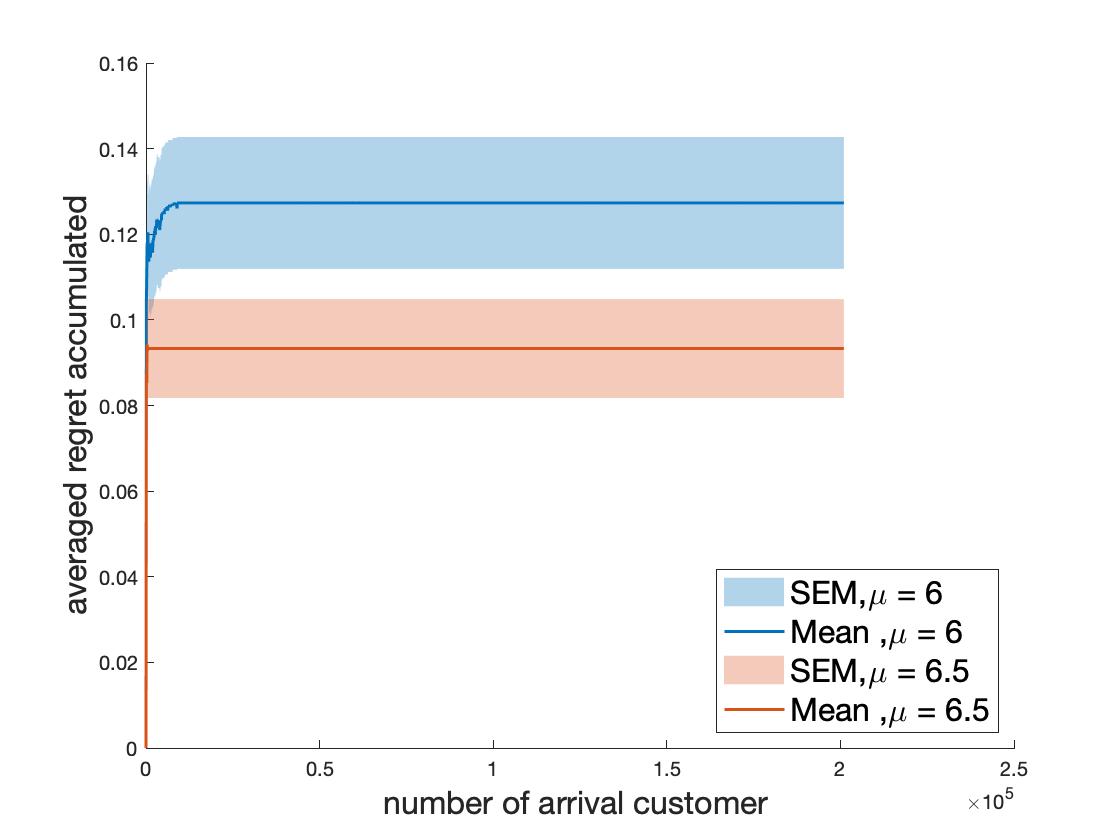}
		\caption{$\lambda = 1$, $R = 1$,  and the optimal threshold is $\bar{K} = 5$. }
		\label{fig1}
	\end{subfigure}%
	\begin{subfigure}{.48\textwidth}
		\centering
		\includegraphics[width=\linewidth]{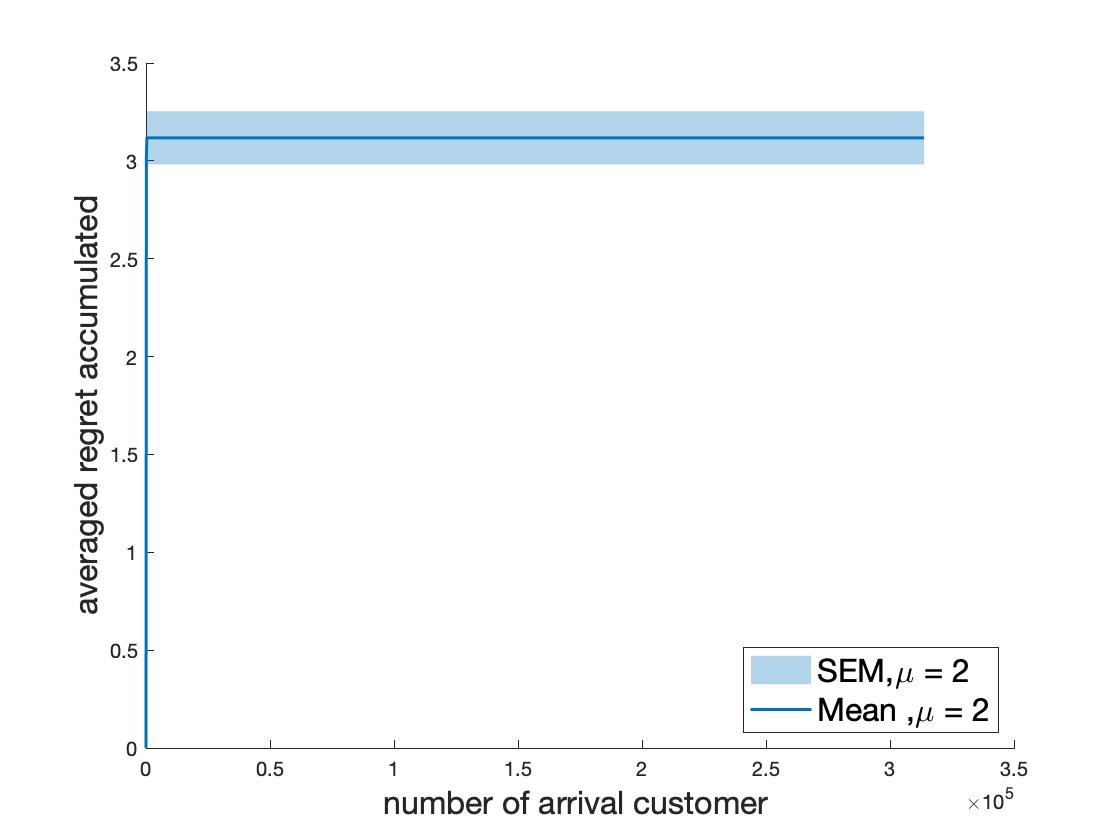}
		\caption{$\lambda = 1$, $R = \tfrac{129}{32}$, and the optimal thresholds $\{4,5\}$ ($\bar{K} = 5$). }
		\label{fig2}
	\end{subfigure}
	\caption{Regret of the learning system when all optimal thresholds are positive. We set $C = 1$, $\mathbb{E}[B^j] = \ln(j)/j$ ,  $K^*(j)\sim \ln(j)$, and $\alpha_j = j$. }
	\label{fig_pos}
\end{figure}

\Added{\paragraph{\bfseries Expected regret with non-zero optimal thresholds:} Figure \ref{fig1} shows the variation of the (expected) regret with respect to the number of arrivals for  $\mu = 6$ and $\mu = 6.5$ when $l_1 = 3$ and $\lambda = 1$. The regret is averaged over 1000 simulations and there are more than $2*10^5$ customers arrivals to the system. The optimal threshold is unique, and the genie-aided dispatcher uses the threshold $\bar{K} = 5$ in both cases that are plotted in Figure \ref{fig1}. The initial upper bound is $K^*(1) = l_1$, which is smaller than the optimal threshold but increases slowly so that eventually $\bar{K}< K^*(j)$ for large $j$. As shown in the analysis and the numerical experiments, the regret is $O(1)$. }Figure \ref{fig2} shows the regret plot with respect to the number of arrivals for  $\mu = 2$, \Added{ $\lambda = 1$  and $R = 129/32$ with $l_1 = 3$. The regret is averaged over 2000 simulations and there are more than $2*10^5$ customers arrivals to the system.}  In this case, the optimal threshold is not unique: both $\bar{K}-1 = 4$ and $\bar{K} = 5$ are optimal thresholds. The alternating genie-aided algorithm uses the policy that is described in Proposition~\ref{prop:AnotherGenieOpt} and only changes the threshold used between busy cycles. 
Similarly, as in Figure~\ref{fig1}, the learning algorithm will not be able to use $\bar{K}$ in the first few batches because of the truncation. 
The plots indicate that constant regret is accumulated, which is consistent with our analytical results; interestingly, in all cases, convergence to the constant regret value happens rapidly.

	\begin{figure}[htbp]
		\centering
		\begin{subfigure}{.48\textwidth}
			\centering
			\includegraphics[width=\linewidth]{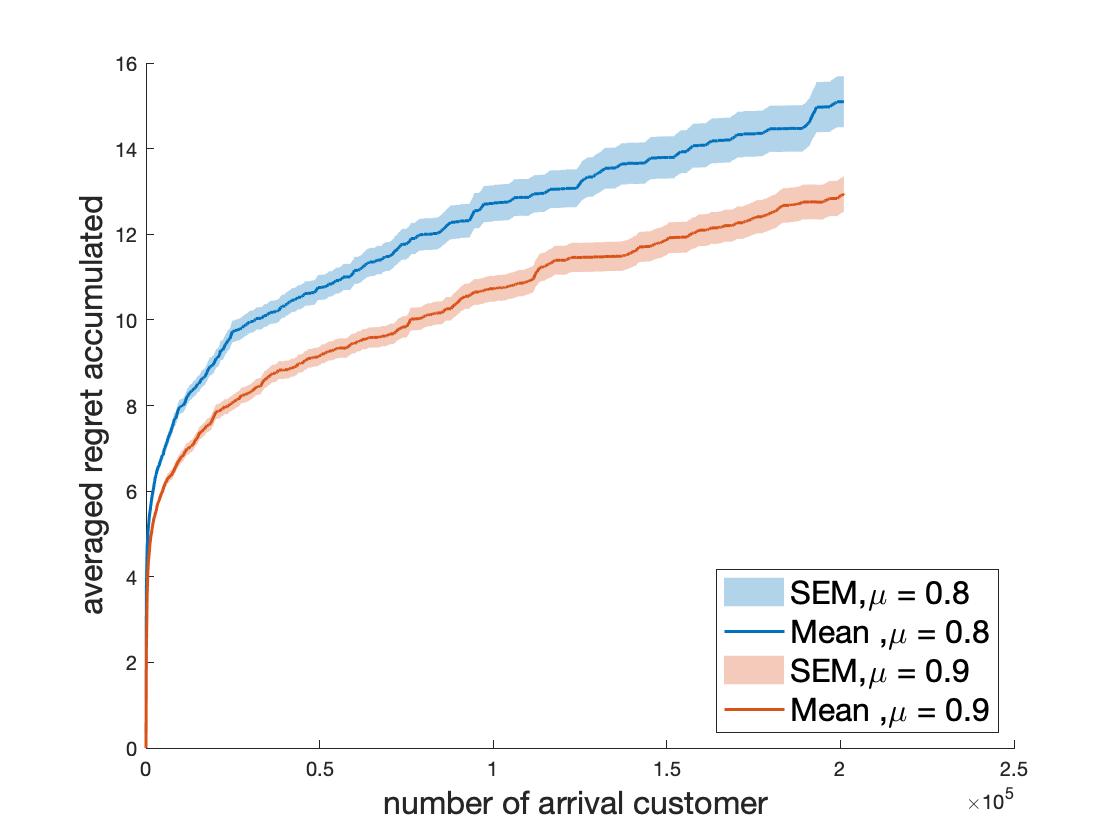}
			\caption{$\lambda= 1$ and the optimal threshold is $\bar{K} = 0$. }
			\label{fig3}
		\end{subfigure}%
		\begin{subfigure}{.48\textwidth}
			\centering
			\includegraphics[width=\linewidth]{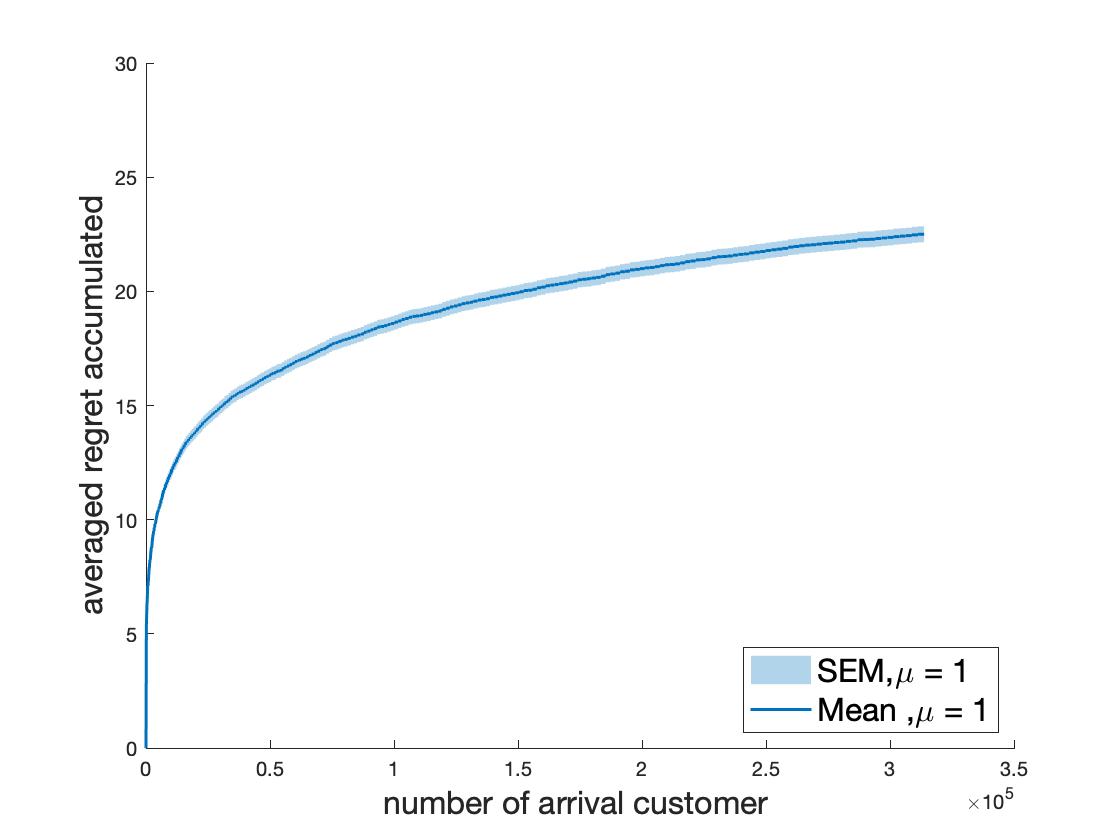}
			\caption{$\lambda = 1 $ and the optimal thresholds are $\{0,1\}$; $\bar{K} = 1$.}
			\label{fig4}
		\end{subfigure}
		\caption{Regret of the learning system when an optimal threshold is zero. We set $C = R = 1$, $\mathbb{E}[B^j] = \ln(j)/j$ , $K^*(j)\sim \ln(j)$ and $\alpha_j = j$.}
		\label{fig_0}
	\end{figure}

	\Added{\paragraph{\bfseries Expected regret with zero being an optimal threshold:}Figure~\ref{fig3} shows how the regret changes with respect to the number of arrivals for $\mu = 0.8$ and $\mu = 0.9$ when $l_1 = 1$ and $\lambda = 1$. The regret is averaged over 2000 simulations and there are more than $10^5$ customers arrived in the system. In both cases shown in Figure~\ref{fig3}, the genie-aided dispatcher uses threshold $\bar{K} =0$.} Figure~\ref{fig4} shows the regret plot with respect to the number of customers for  $\mu =1$  \Added{and $\lambda = 1$ }when $l_1 = 3$. The regret is averaged over 2000 simulations and there are more than $2*10^5$ customers arrived in the system. In this case, the optimal threshold is not unique: both $\bar{K}-1 = 0$ and $\bar{K} =1$ are optimal thresholds. The alternating genie-aided dispatcher uses the policy that is described in Proposition~\ref{prop:AnotherGenieOpt} and only changes the threshold between busy cycles. The plots indicate that sub-linear regret is accumulated in all cases. Here, when the learning dispatcher uses threshold $0$ in phase $2$ of a given batch, the existence of the forced exploration phase in the next batch results in regret being accumulated. \Added{Note that for all plots shown in Figure~\ref{fig_0}, the optimal thresholds can be used by the learning dispatcher in phase $2$ right from the first batch.}

\begin{figure}[htbp]
		\centering
		\begin{subfigure}{.48\textwidth}
			\centering
			\includegraphics[width=\linewidth]{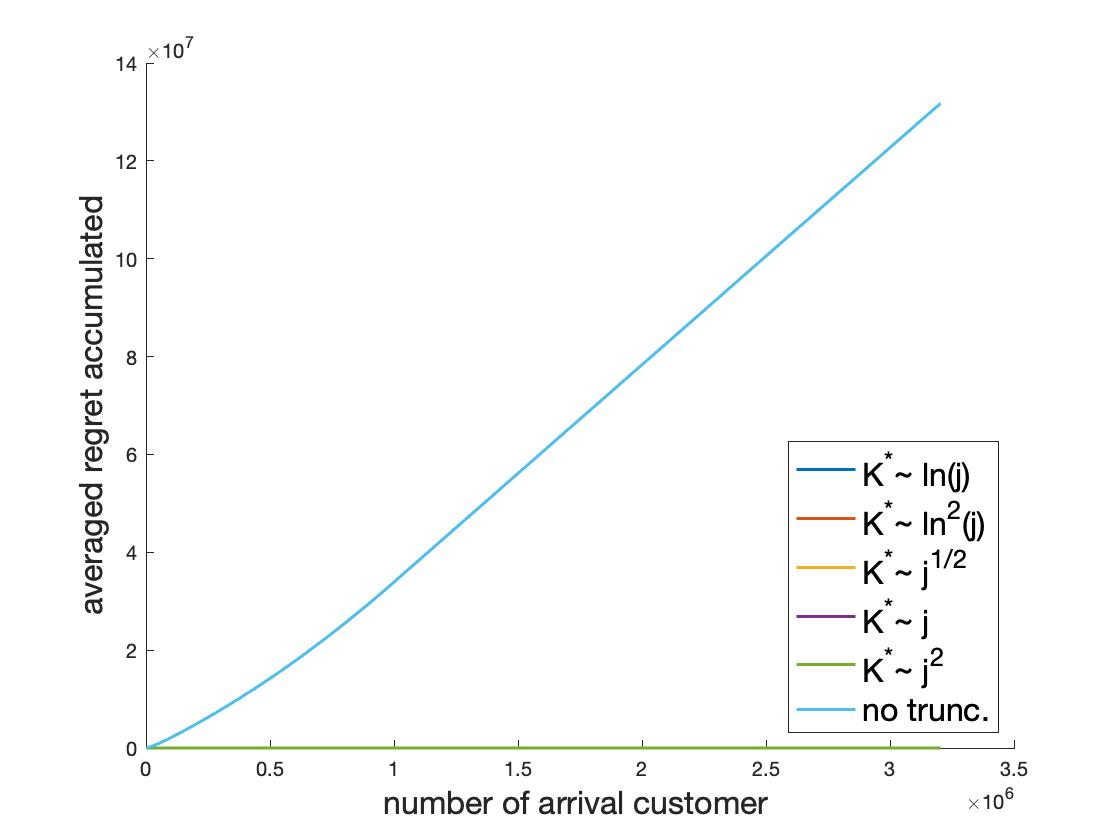}
			\caption{ With the no-truncation option included. }
			\label{fig5}
		\end{subfigure}%
		\begin{subfigure}{.48\textwidth}
			\centering
			\includegraphics[width=\linewidth]{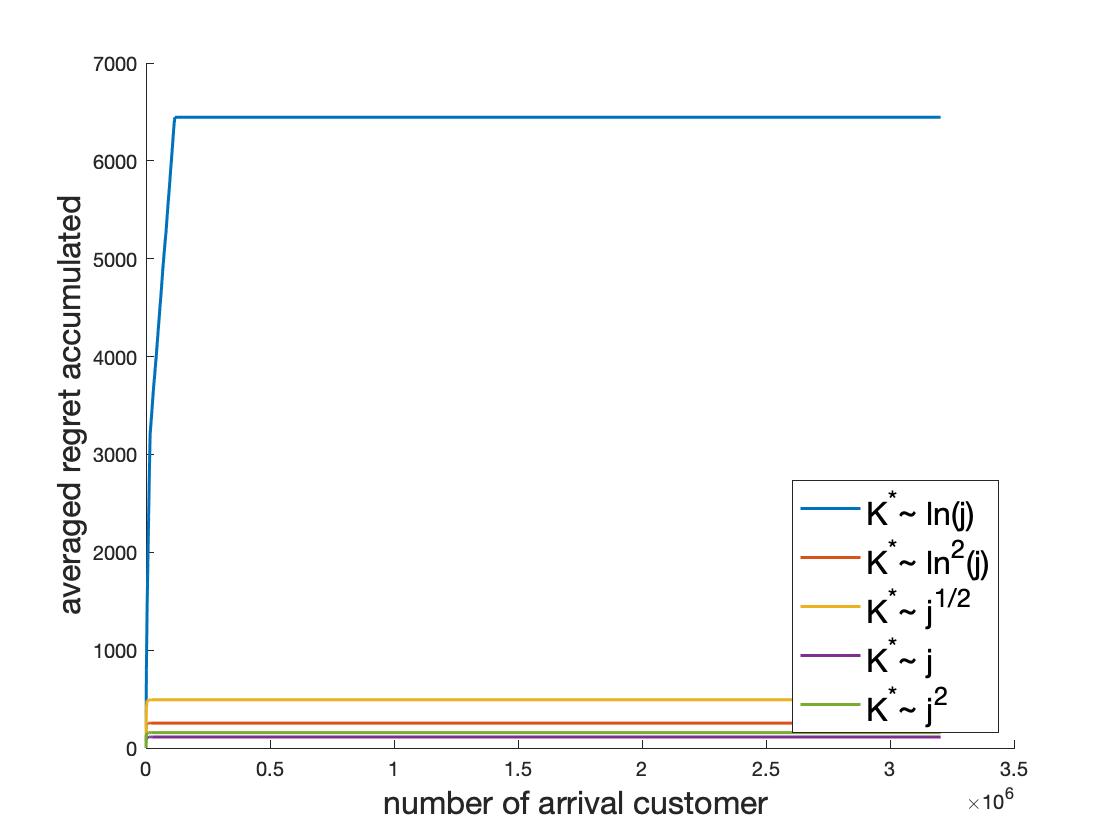}
			\caption{ Exclude the no-truncation option.}
			\label{fig6}
		\end{subfigure}
		\caption{Regret of the learning system when $\mu = 3$, $\lambda = 3.5$, $R = 21$ and the optimal threshold is $8$ using and not using the truncation for the threshold used in phase $2$. We set $C  = 1$, $\mathbb{E}[B^j] = \ln(j)/j$  and $\alpha_j = j$.}
		\label{fig_difK2}
	\end{figure}

\Added{\paragraph{\bfseries Expected regret with different choices of $K^*(j)$:}
    We introduced truncation with the parameter $K^*(j)$ in our analysis since we needed a bound on the worst-case queue length for the learning system. We obtained a particular order of the regret with the choice of $K^*(j) = \max\{ \floor{\ln(j)},0 \}+ l_1+ Q_0$. Next, we explore the impact of different choices of $K^*(j)$ in 
    Figure~\ref{fig_difK2}. We use $\sim$ to indicate the order at which $K^*(j)$ increases: specifically, $K^*(j) \sim f(j)$ means $K^*(j) =  \max\{ \floor{f(j)},0 \}+ l_1+ Q_0$. 
    The regret values are averaged over $2000$ simulations, and there are more than $3*10^5$ arrival customers that arrive in more than $700$ batches. 
    In Figure~\ref{fig_difK2}, we use  $\mu = 3$, $\lambda = 3.5$, and $R = 21$. The optimal threshold is  $\bar{K} = 8$. 
    The $M/M/1$ queue with $\mu = 3$ and  $\lambda = 3.5$ is not stable.  Despite this, Figure~\ref{fig6} suggests that constant regret is achieved for various truncation choices. However, when no truncation is enforced, the regret accumulated seems to grow linearly with respect to the number of arrivals, see Figure~\ref{fig5}. This suggests that the truncation helps to ensure a lower regret yet one may use a $K^*(j)$ that grows faster than $\ln(j)$. Confirming this through analysis is a topic to explore in future research. 
}

    

\Added{
  }

  \begin{figure}[htbp]
        \centering
		\begin{subfigure}{.48\textwidth}
			\centering
			\includegraphics[width=\linewidth]{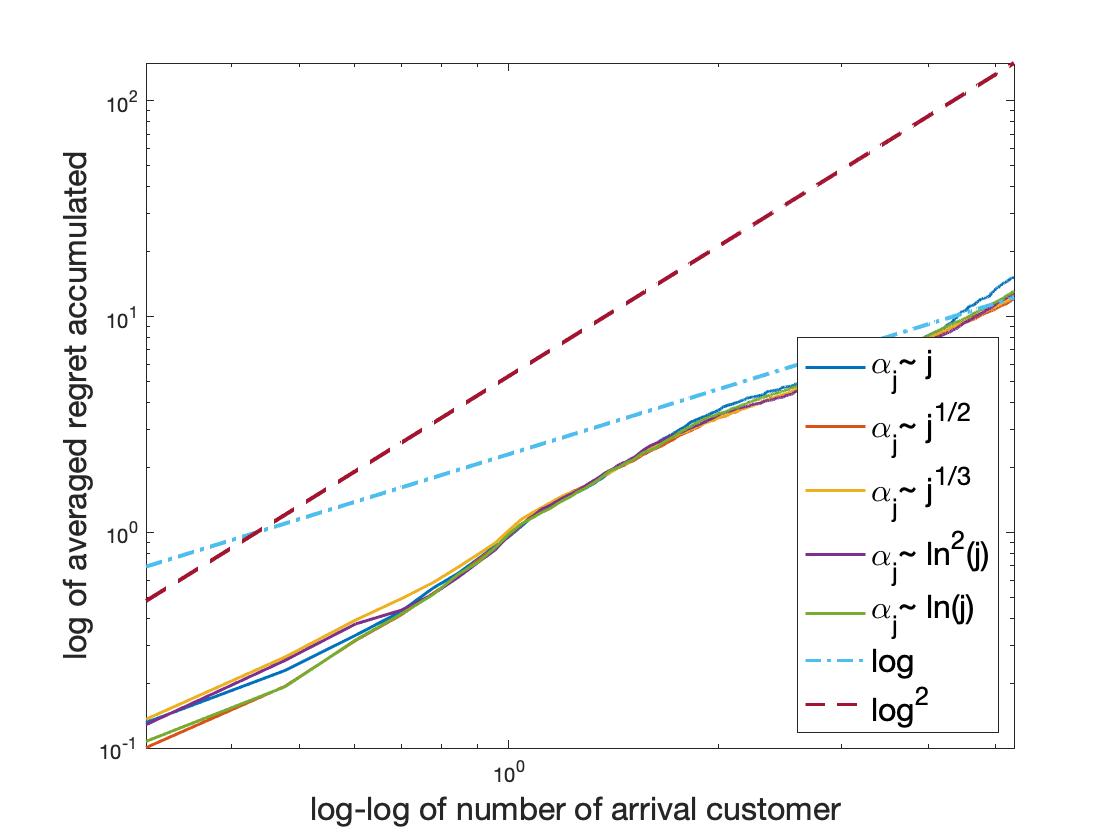}
			\caption{ Log versus log-log  regret plot on regret accumulated when $\mu = 0.8$, $\lambda = 1$ and $R = 1$. Optimal threshold is $\bar{K} = 0$.}
			\label{fig8}
		\end{subfigure}
  \begin{subfigure}{.48\textwidth}
			\centering
			\includegraphics[width=\linewidth]{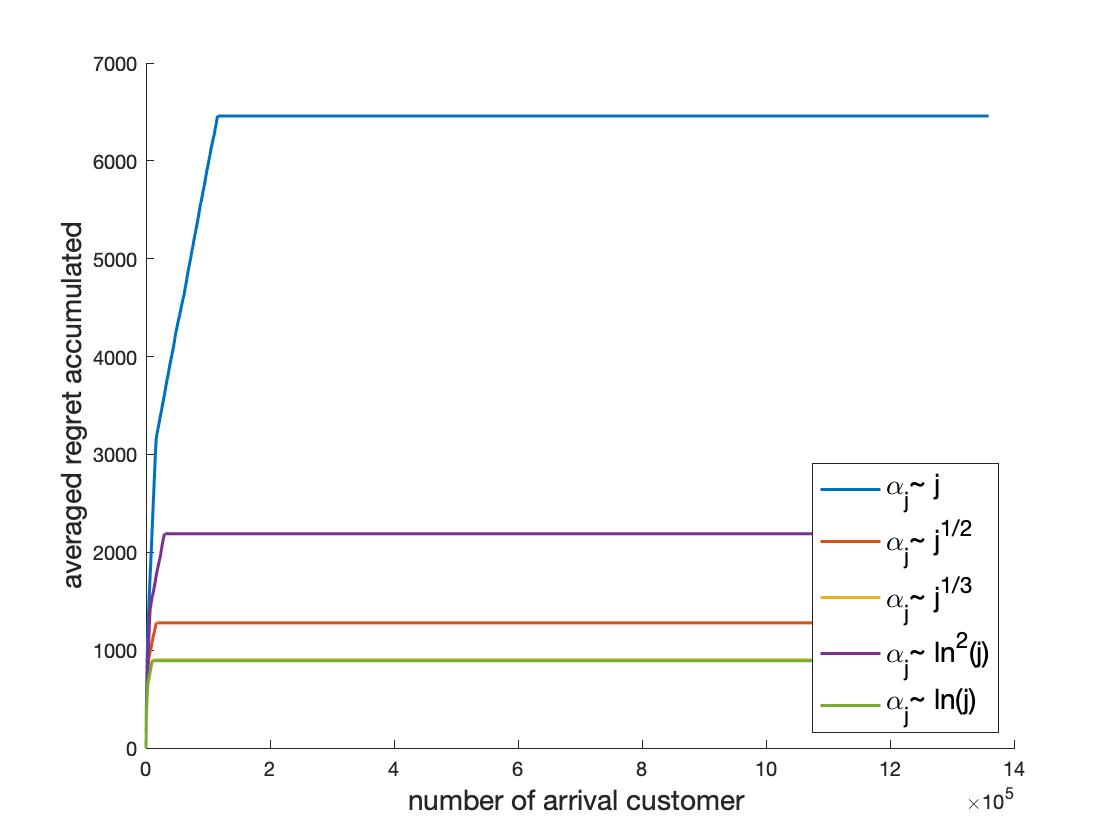}
			\caption{Regret accumulated when $\mu = 3$, $\lambda = 3.5$, and $R =21$. Optimal threshold is $\bar{K} = 8$. }
			\label{fig_t8_da1}
		\end{subfigure}
		\caption{Regret accumulated for different choice of $\alpha_j$. We set $C =  1$, $\mathbb{E}[B^j] = \ln(j)/j$ and  $K^*(j)\sim \ln(j)$.}
		\label{fig_alpha}

	\end{figure}
 

 
\Added{ \paragraph{\bfseries Expected regret with different choices of $\alpha_j$:}
We introduced $\alpha_jl_2$  to be the minimum length of phase 2 for the $j^{th}$ batch. Figure~\ref{fig_alpha} plots the average regret accumulated with different choices of $\alpha_j$'s. In particular,
Figure~\ref{fig8} is the log versus log-log plot of the regret accumulated when $\mu = 0.8$, $\lambda = 1$ with more than $2*10^5$ arrival customers, and Figure~\ref{fig_t8_da1}  plots the regret accumulated when $\mu = 3$, $\lambda = 3.5$ with more than $10*10^5$ arrival customers.  We use $\alpha_j \sim f(j)$ to denote $\alpha_j = \max\{\floor{f(j)},1\}$. The regret is averaged over 2000 simulations in both plots.  
 Figure~\ref{fig_alpha} suggests that for all these choices of $\alpha_j$, a sub-linear regret is accumulated,  
 and having an $\alpha_j$ that grows slower may still be able to achieve the regret bounds proved for $\alpha_j = j$. 
}

\begin{figure}[htbp]
        \centering
		\begin{subfigure}{.48\textwidth}
			\centering
			\includegraphics[width=\linewidth]{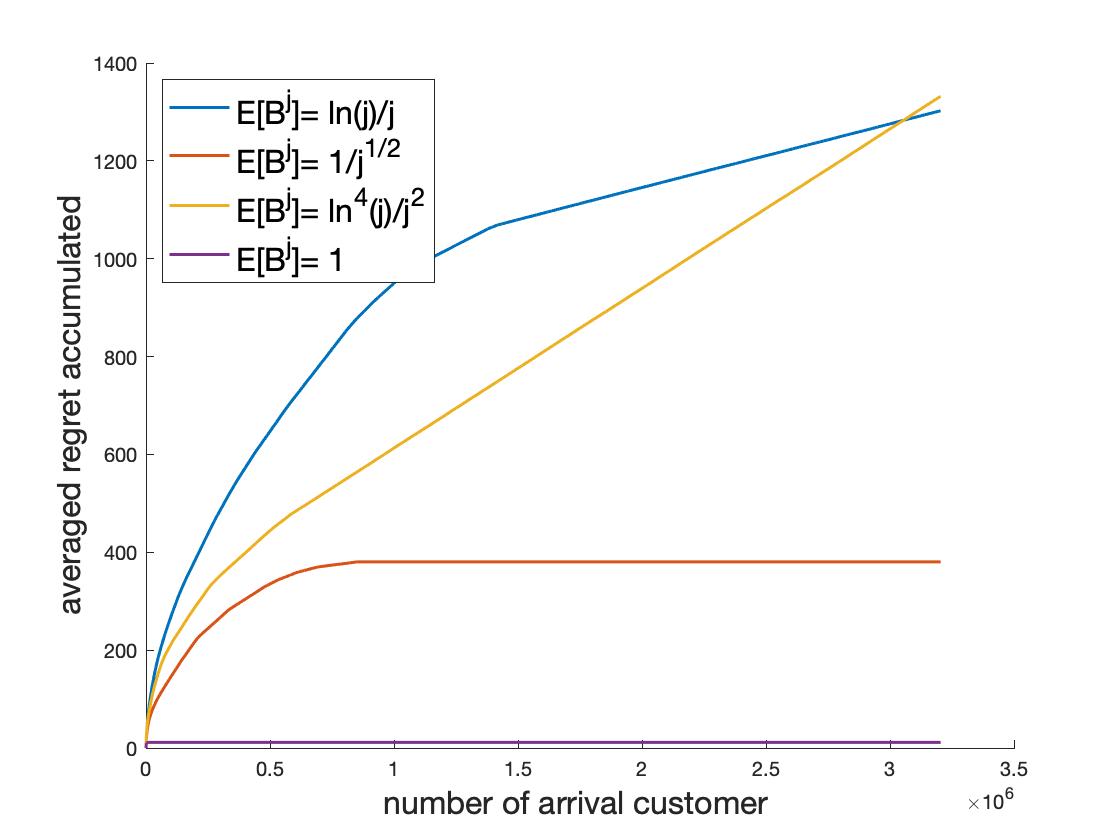}
			\caption{Average regret plot when $\mu = 1.3$, $\lambda = 1$. The optimal threshold is $\bar{K} = 1$.}
			\label{fig14}
		\end{subfigure}
    \begin{subfigure}{.48\textwidth}
			\centering
			\includegraphics[width=\linewidth]{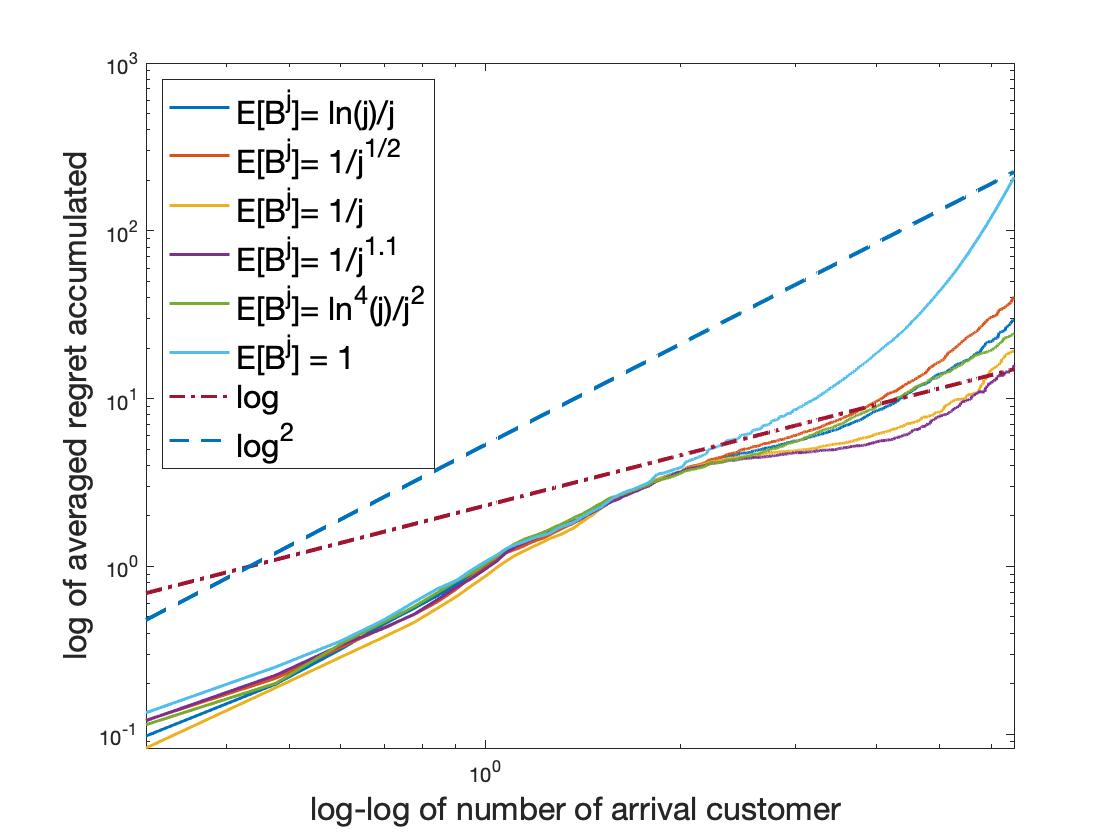}
			\caption{ Log versus log-log  regret plot when $\mu = 0.8$, $\lambda = 1$. The optimal threshold is $\bar{K} =0$.}
			\label{fig12}
		\end{subfigure}
		\caption{Regret accumulated when the choices of $\mathbb{E}[B^j]$ vary. We set $C = R = 1$, $K^*(j)\sim \ln(j)$ and $\alpha_j = j$.}
		\label{fig_t1_difprob}

	\end{figure}


\Added{ \paragraph{\bfseries Expected regret with different choices of $\mathbb{E}[B^j]$:} We also examined difference choices of $\mathbb{E}[B^j]$, which controls the probability of having a phase 1 when the threshold used in the previous phase 2 is 0. 
Figure~\ref{fig_t1_difprob} shows the plots of various choices of $\mathbb{E}[B^j]$. 
From these finite-time experiments, it seems that having a high enough chance to explore during the first few batches the learning dispatcher observes helps to reduce the regret accumulated. However, comparing the plots of $\mathbb{E}[B^j] = \ln^4 (j)/j^2$ and $\mathbb{E}[B^j] = \ln(j)/j$ in Figure~\ref{fig14}, it seems that only having a high probability of exploration for the first few batches is not be enough to achieve $O(1)$ regret since the slope of the plot for $\mathbb{E}[B^j] = \ln(j)/j $ decreases a lot faster than the plot of $\mathbb{E}[B^j] = \ln^4 (j)/j^2$. Although all the choices of $\mathbb{E}[B^j]$ seem to achieve sub-linear regret for the case $\bar{K} = 0$, always having the exploration phase when the threshold used in the previous phase 2 is 0 accumulates a higher regret with a different scaling behavior. 
}

  \begin{figure}[htbp]
        \centering
		\begin{subfigure}{.48\textwidth}
			\centering
			\includegraphics[width=\linewidth]{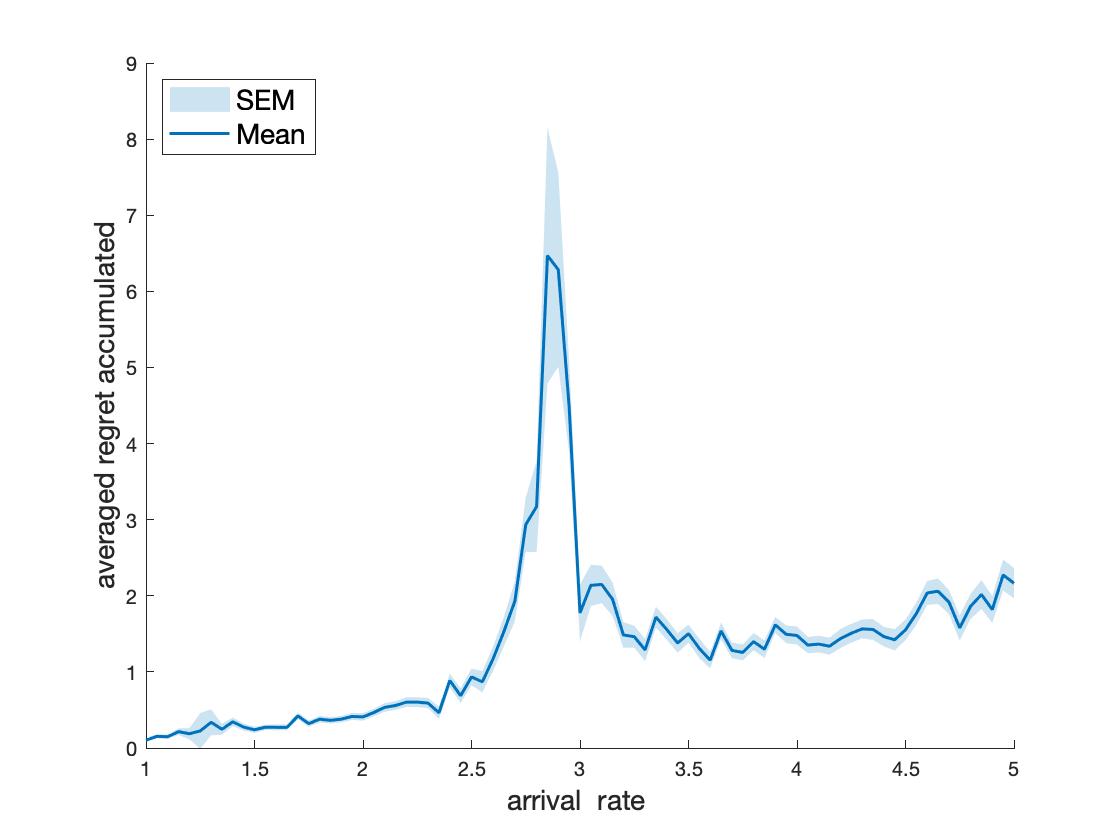}
			\caption{ Average regret plot versus various $\lambda$'s when $\mu = 6$.}
			\label{fig9}
		\end{subfigure}%
		\begin{subfigure}{.48\textwidth}
			\centering
			\includegraphics[width=\linewidth]{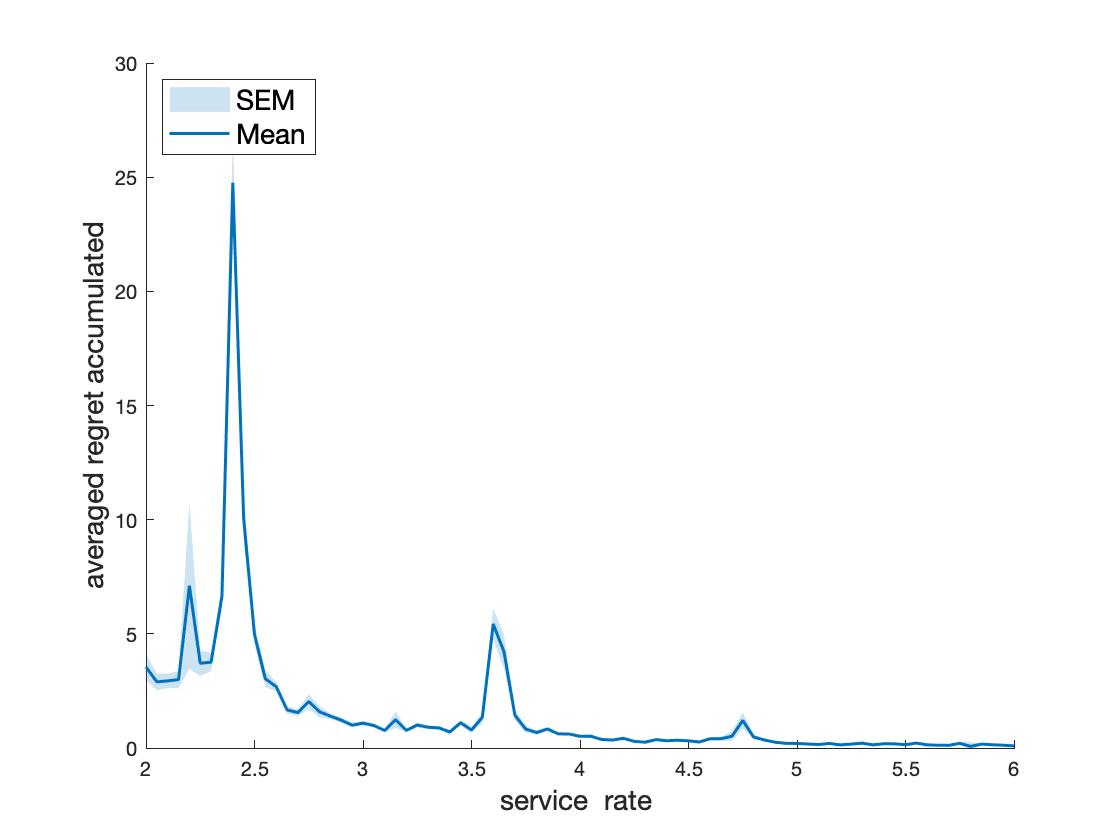}
			\caption{Average regret plot versus various $\mu$'s when $\lambda =1 $ }
			\label{fig10}
		\end{subfigure}
		\caption{Regret plot for various arrival and service rates. We set $C = R = 1$, $\mathbb{E}[B^j] = \ln(j)/j$, $K^*(j)\sim \ln(j)$ and $\alpha_j = j$.}
		\label{fig_as}

	\end{figure}
 \Added{\paragraph{\bfseries Expected regret with different values of $\mu$ and $\lambda$:}
 Figure~\ref{fig_as} plots the  average regret accumulated when seeing more than $3*10^5$ arriving customers when fixing one of the pair of arrival and service rates while varying the other. The regret values are averaged over $600$ simulations. From the plot, we observe that when the arrival rate is fixed, as the service rate increases, in general, the regret decreases. However, the decrease is not strict and instead is non-monotonic, where the large cusps are usually around the parameter choices that have non-unique optimal thresholds. When the service rate is fixed, as the arrival rate increases, the regret follows a similar increasing/decreasing trend. 
}
\begin{figure}[htbp]
        \centering
		\begin{subfigure}{.48\textwidth}
			\centering
			\includegraphics[width=\linewidth]{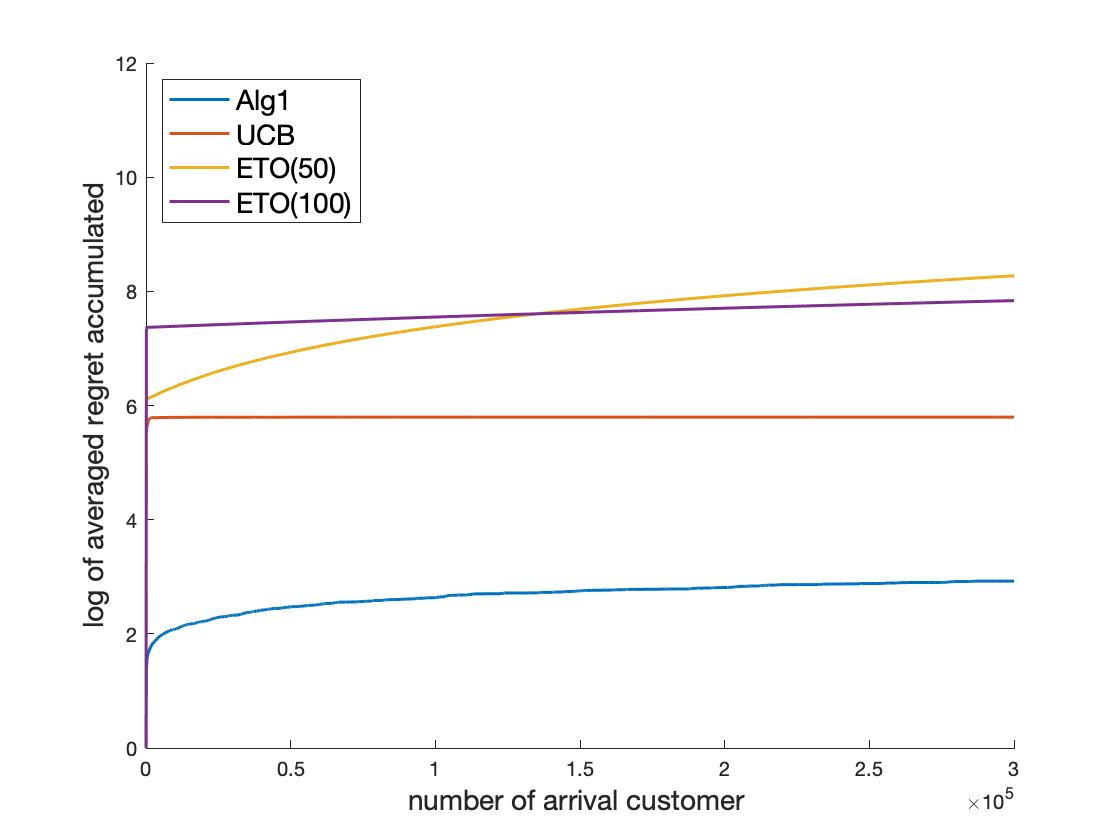}
			\caption{Log average regret plot when $\mu = 0.8$, $\lambda = 1$, $R = 1$ and $\bar{K} = 0$.}
			\label{fig_difAlg_t0_2}
		\end{subfigure}
  \begin{subfigure}{.48\textwidth}
		\centering
			\includegraphics[width=\linewidth]{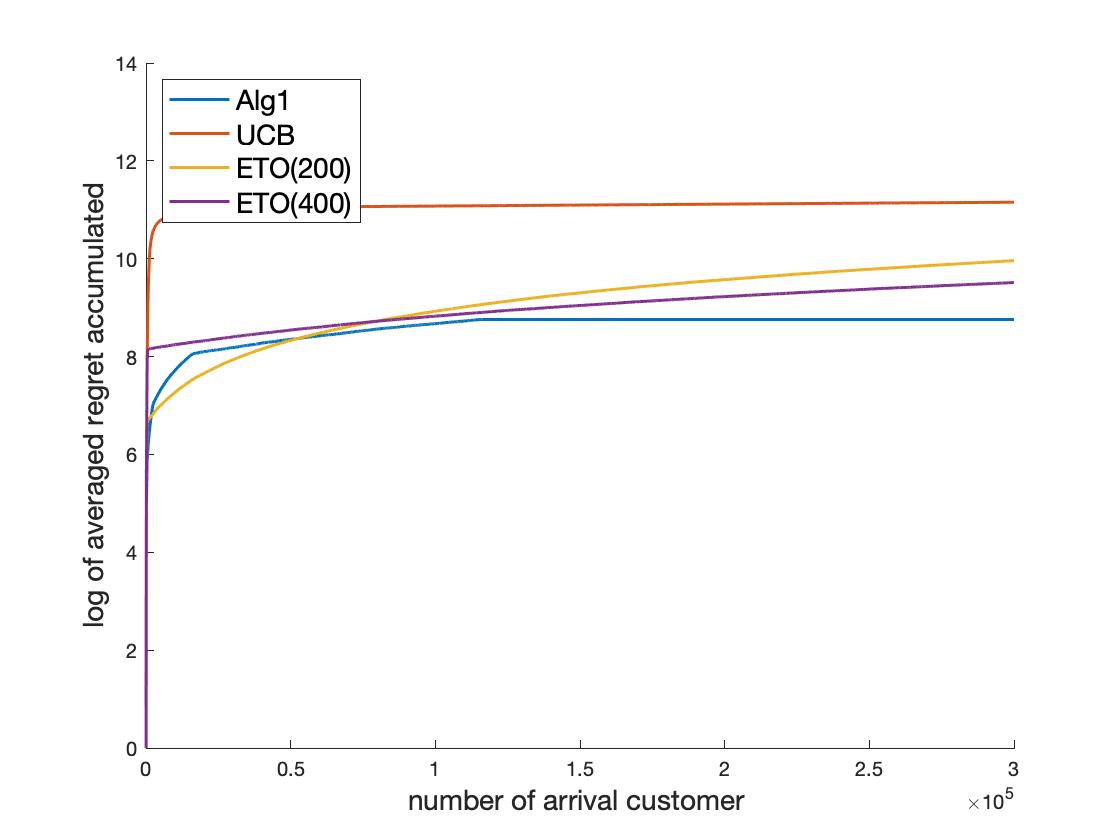}
			\caption{Log average regret plot when $\mu = 3$, $\lambda = 3.5$, $R = 21$ and $\bar{K} = 8$. }
			\label{fig_difAlg_t8_2}
		\end{subfigure}
		\caption{Log of regret accumulated when using different algorithms when the optimal threshold is unique. Alg1 is the learning algorithm proposed in Algorithm~\ref{alg:Alg3}. We set $C  = 1$, $\mathbb{E}[B^j] = \ln(j)/j$ , $K^*(j)\sim \ln(j)$ and $\alpha_j = j$. ETO($M$) is the Estimate-Then-Optimize algorithm that always accepts the first $M$ customers. UCB is the  Upper Confidence Bound algorithm.
        }
		\label{fig_difAlg_1}

	\end{figure}


 \begin{figure}[htbp]
        \centering
		\begin{subfigure}{.48\textwidth}
			\centering
			\includegraphics[width=\linewidth]{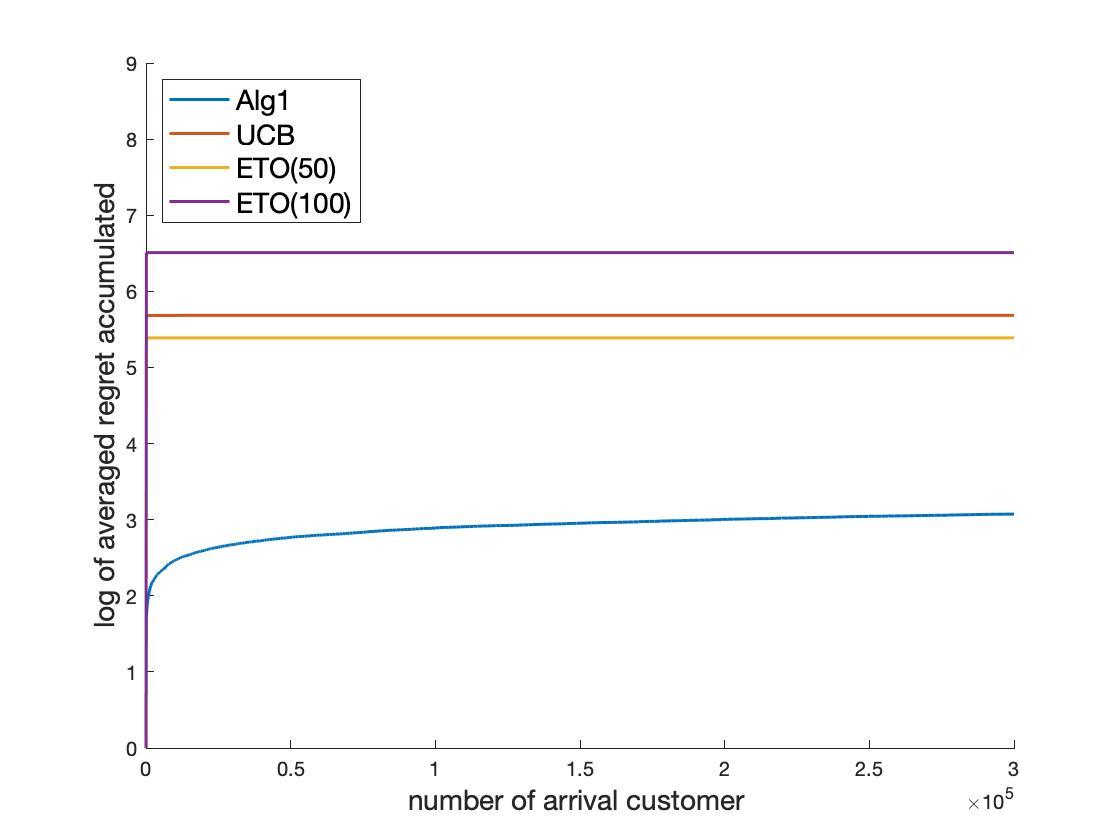}
			\caption{Log averaged regret plot when $\mu = 1$ and $\lambda = 1$. Both $\bar{K} = 1$ and $\bar{K}-1 = 0$ are optimal thresholds.  }
			\label{fig_difAlg_NUt0_2}
		\end{subfigure}
  \begin{subfigure}{.48\textwidth}
			\centering
			\includegraphics[width=\linewidth]{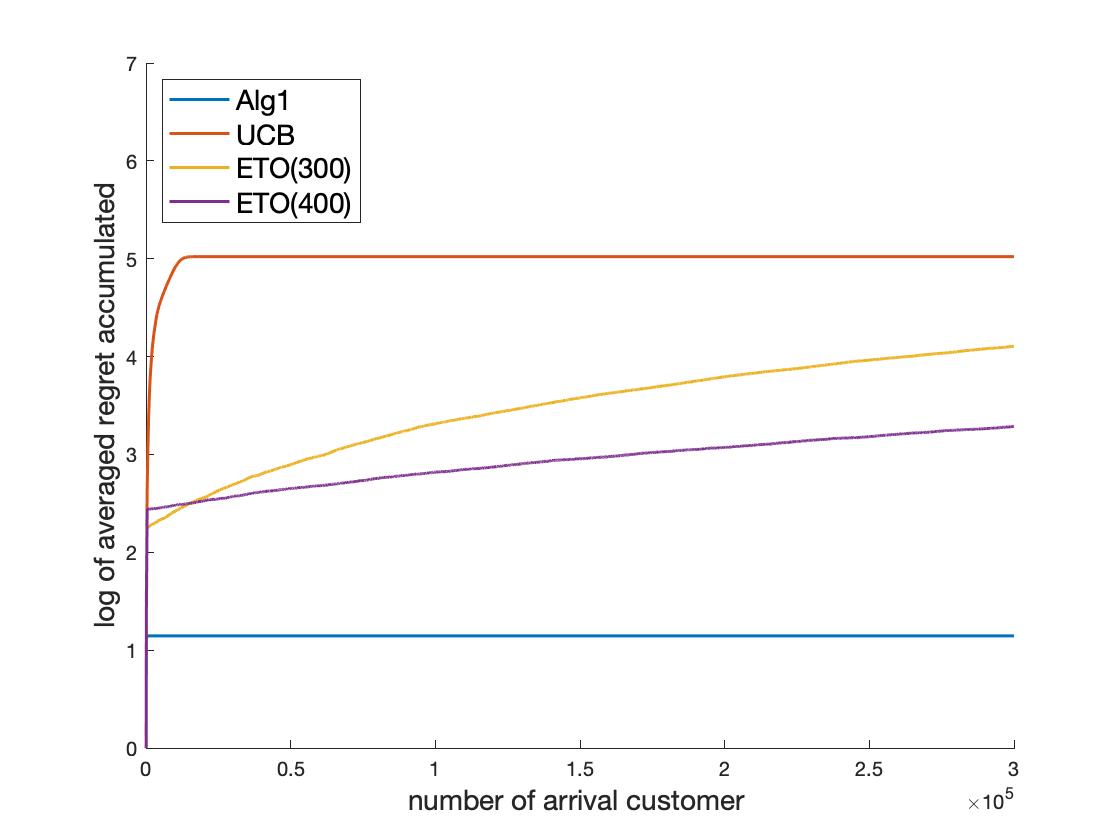}
			\caption{Log of average regret plot when $\mu = 2$, $\lambda = 1$ and $R = 129/32$. Both $\bar{K} = 5$ and $\bar{K}-1 = 4$ are optimal thresholds.}
			\label{fig_difAlg_NUt5_2}
		\end{subfigure}
		\caption{Log of regret accumulated when using different algorithms when the optimal thresholds are not unique. Alg1 is the learning algorithm proposed in Algorithm~\ref{alg:Alg3}. We set $C = 1$, $\mathbb{E}[B^j] = \ln(j)/j$ , $K^*(j)\sim \ln(j)$ and $\alpha_j = j$. ETO($M$) is the Estimate-Then-Optimize algorithm that always accepts the first $M$ customers. UCB is the  Upper Confidence Bound algorithm. 
        }
		\label{fig_difAlg_3}

	\end{figure}

 
\Added{
\paragraph{\bfseries Comparison with benchmark algorithms:}
 We also compared the finite time performance of our proposed Algorithm~\ref{alg:Alg3} with a few benchmark algorithms. In Figure~\ref{fig_difAlg_1} and Figure~\ref{fig_difAlg_3} we compared Algorithm~\ref{alg:Alg3}  with the Estimate-Then-Optimize (ETO) algorithm and the Upper Confidence Bound (UCB) algorithm when there are more than $3*10^5$ arrival customers and the regrets are averaged over 2000 simulations. We use ETO($M$) to denote the ETO algorithm which always accepts the first $M$ customers. We use the UCB algorithm described in \cite[Section 7.1]{banditalg} but with UCB bias subtracted from the estimated average service time.  Figure~\ref{fig_difAlg_t0_2} plots the log of average regret for the case when $\mu = 0.8$, $\lambda = 1$ and the optimal threshold is 0. Figure~\ref{fig_difAlg_t8_2} plots the log of average regret for the case when $\mu = 3$, $\lambda = 3.5$ and the optimal threshold is 8.  For the parameters used in these two plots, the optimal threshold is unique.
 Figure~\ref{fig_difAlg_NUt0_2} plots the average regret for the case when $\mu = 1$, $\lambda = 1$ and the optimal thresholds are $\{1,0\}$.
 Figure~\ref{fig_difAlg_NUt5_2} plots the average regret for the case when $\mu = 2$, $\lambda = 1$ and the optimal thresholds are $\{5,4\}$. For the parameter choices in Figure~\ref{fig_difAlg_3}, the optimal threshold is not unique. The regret values in these two plots are computed with respect to the alternating genie-aided system which would change the threshold used between $\{\bar{K}, \bar{K}-1\}$ according to the threshold used by Algorithm\ref{alg:Alg3}, ETO or UCB. 
 }
 \begin{figure}[htbp]
		\centering
    \includegraphics[width=0.5\linewidth]{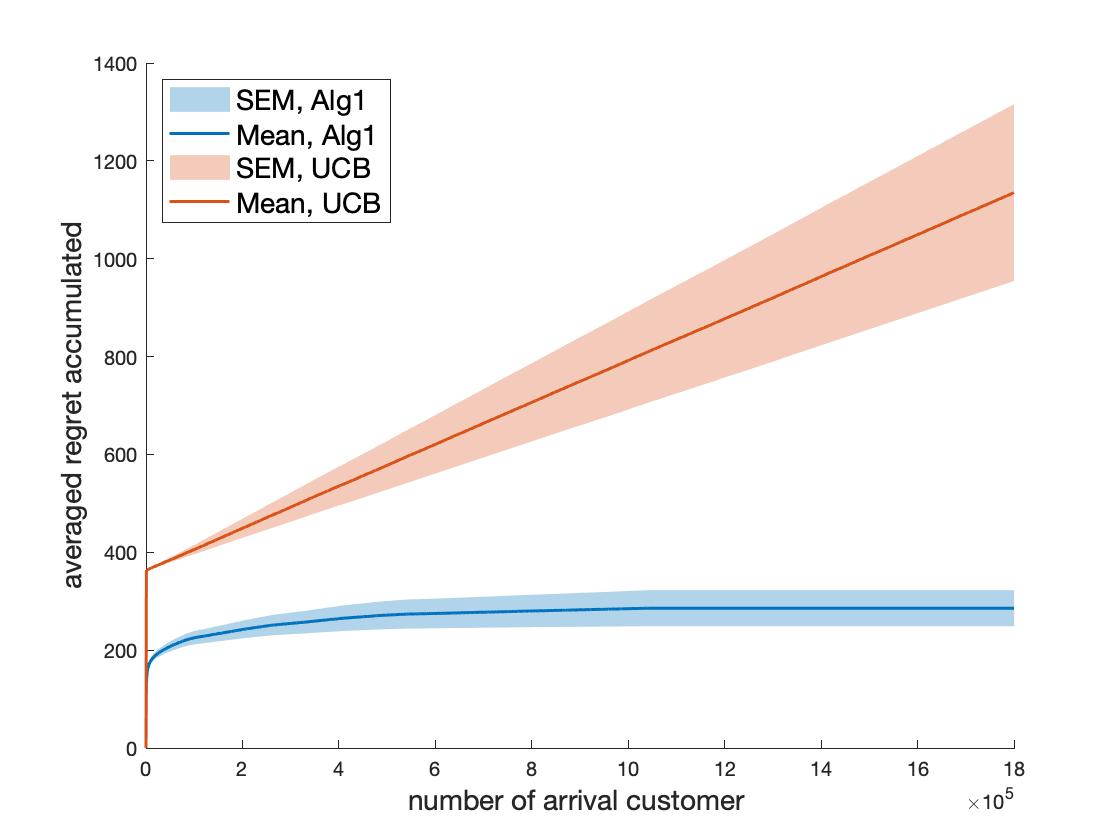}
		\caption{ Regret accumulated when $\mu = 1.1$, $\lambda = 1$, $C = R = 1$ and $\bar{K} = 1$. Alg1 is the learning algorithm proposed in Algorithm~\ref{alg:Alg3}. We set $l_1 = l_2 = 30$, $B^j = \ln(j)/j$ , $K^*(j)\sim \ln(j)$ and $\alpha_j = j$. UCB is the Upper Confidence Bound algorithm. }
		\label{fig_difAlg_5}
	\end{figure}
 
 \Added{
 The order of the regret accumulated by Algorithm~\ref{alg:Alg3} and UCB are similar in  Figure \ref{fig_difAlg_t8_2} and \ref{fig_difAlg_NUt5_2}. However, in  Figure~\ref{fig_difAlg_t0_2} and \ref{fig_difAlg_NUt0_2} where $0$ is an optimal threshold, UCB achieves constant regret yet Algorithm~\ref{alg:Alg3} achieves a sublinear regret. It is likely that the regret accumulated by Algorithm~\ref{alg:Alg3} would slowly increase as the number of arrivals increases and eventually becomes larger than the regret of the UCB algorithm. Our algorithm may choose to use threshold 0 and then a phase 1 may be enforced and regret accumulates because of this. In Figure~\ref{fig_difAlg_5}, we compared the finite time performance of our proposed algorithm with UCB when $\mu = 1.1$ and $\lambda = 1$ with 2000 simulations and more than $10^6$ arrival customers. In this case, 1 is the unique optimal threshold. As we can observe from Figure~\ref{fig_difAlg_5}, the regret of UCB increases in a (approximately) linear fashion, while our proposed algorithm is able to achieve constant regret. In fact, we can argue the following for UCB-based dispatching (under the simpler setting of the arrival rate being known): 
 \begin{enumerate}[leftmargin=*]
 \item When the optimal threshold(s) is positive, then some bad initial service time samples can result in the estimated threshold being $0$. This bad event happens with positive probability for all $\mu> \tfrac{C}{R}$ (the probability decreases to $0$ as $\mu\rightarrow\infty$). Whenever this bad event occurs, then the UCB-based dispatching algorithm stops dispatching customers, obtains no new service time samples, and incurs linear regret. 
 \item When $0$ is an optimal threshold, then the corresponding bad event of estimating the threshold as positive is more benign. This holds as dispatching more customers only results in more service time samples, which then help to correct inaccurate estimates. Hence, we expect to achieve a constant or slowly growing (sub-linear) regret. 
 \end{enumerate}
 Note that the explanation above supports the conjecture in Remark~\ref{rem:conjecture} since the worst-case (over parameters) regret of UCB is expected to be linear in $N$. Moreover, since UCB needs to compute the estimated threshold at every arrival, it requires more computation when compared to Algorithm~\ref{alg:Alg3}.  
 }
 
 \begin{figure}[htbp]
        \centering
		
		\begin{subfigure}{.48\textwidth}
			\centering
			\includegraphics[width=\linewidth]{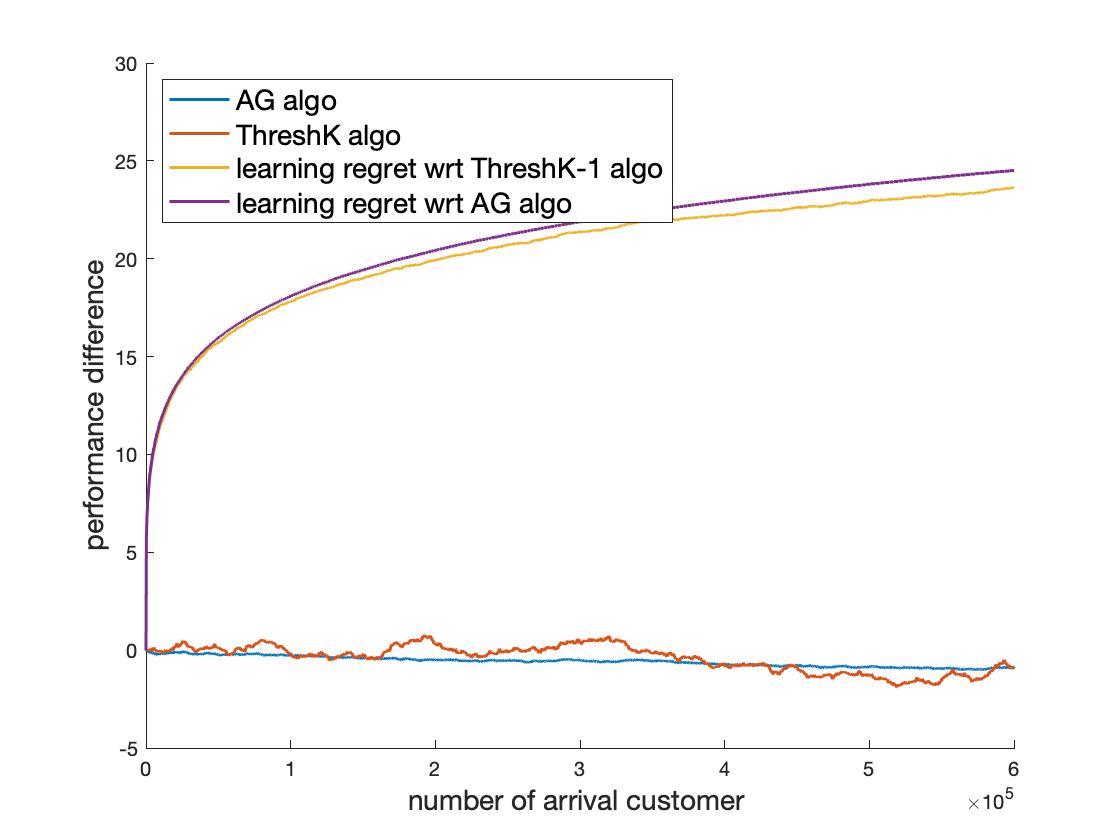}
			\caption{$\mu = \lambda = C = R = 1$. Both $\bar{K} = 1$ and $\bar{K}-1 = 0$ are optimal thresholds.}
			\label{com_genie_t0}
		\end{subfigure}
  \begin{subfigure}{.48\textwidth}
			\centering
			\includegraphics[width=\linewidth]{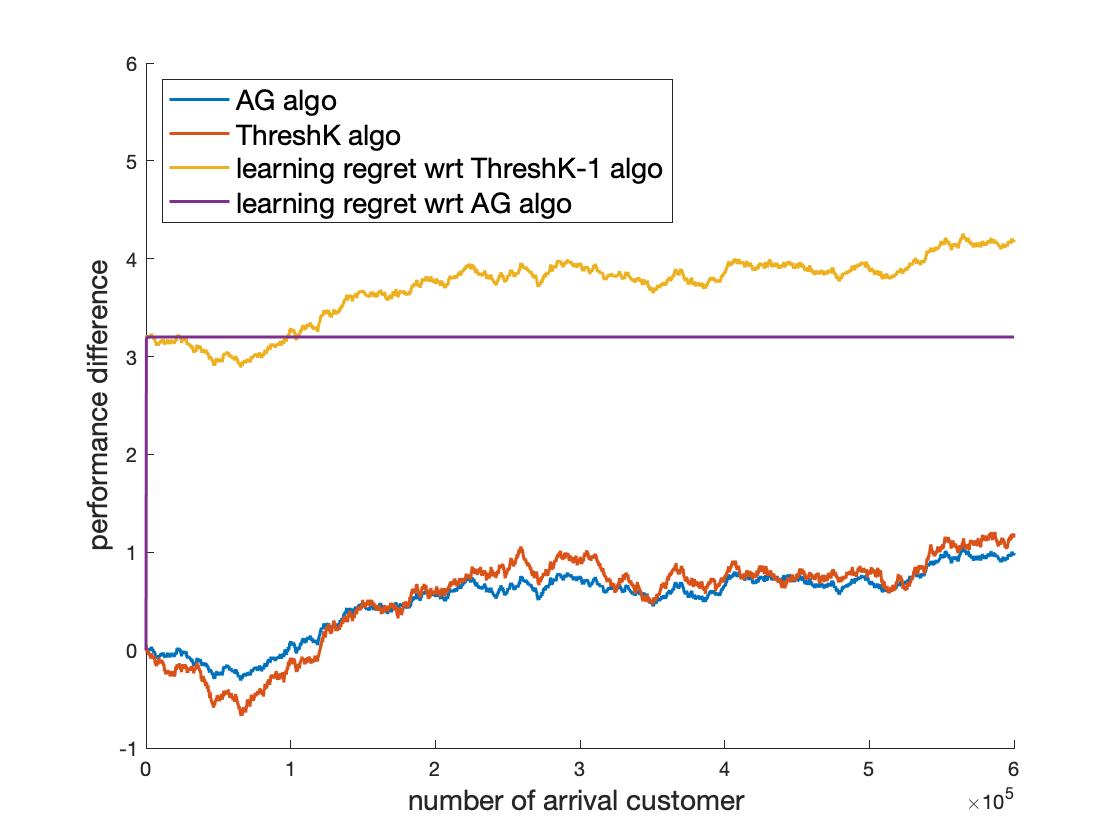}
			\caption{$\mu = 2$, $\lambda = 1$ and $R = 129/32$. Both $\bar{K} = 5$ and $\bar{K}-1 = 4$ are optimal thresholds.}
			\label{com_genie_t1}
		\end{subfigure}
		\caption{ Performance difference between the alternating genie, the genie algorithm using threshold $\bar{K}$ and the genie algorithm using threshold $\bar{K}-1$. The accumulated  net gain of   the genie algorithm using threshold $\bar{K}-1$  is scaled to be 0. 
        }
  
		\label{fig_comp_genie}
	\end{figure}
\Added{\paragraph{\bfseries Comparison of different genie-aided algorithms:}Figure~\ref{fig_comp_genie} compares the accumulated net gain between the alternating genie-aided algorithm (``AG algo" in the legend) coupled with Algorithm~\ref{alg:Alg3} and the genie-aided algorithms using threshold $\bar{K}$ (``ThreshK algo" in the legend) or $\bar{K}-1$ (``ThreshK-1 algo" in the legend) when optimal thresholds are not unique; the accumulated net gain of the genie-aided algorithm using threshold $\bar{K}-1$ are scaled to be 0.  Figure~\ref{fig_comp_genie} plots the difference between the net gain obtained by the alternating genie-aided system and the genie-aided system using static threshold $\bar{K} -1$, and the difference of the net gain between two genie-aided systems using static threshold $\bar{K}$ and $\bar{K}-1$ over two sets of parameters. We also include the regret accumulated by the learning algorithm compared with the genie-aided algorithm using threshold K-1. The performances of the algorithms are averaged over $18000$ simulations. As we can observe from the plots, the regret accumulated by the learning algorithm (with respect to either the alternating genie-aided system or the genie-aided system using threshold $K-1$) dominates the performance difference between the alternating genie-aided system and the genie-aided system using threshold $K-1$ , and the performing difference between the genie-aided system using threshold $K$ and the genie-aided system using threshold $K-1$. This is more evidence in favor of Remark~\ref{rem:optimal}. 
 }
 \section{Conclusions}\label{section:Conclusion}
	In this paper, we considered a social welfare maximizing problem, which was first proposed and studied 
    in \cite{Naor}. We studied the learning problem of finding the proper threshold admission policy when the service and arrival rates are unknown. We proposed a learning algorithm that consists of batches where each batch has an optional exploration phase with a fixed length and an exploitation phase.  When the optimal policy is unique, we showed that our learning algorithm achieves an $O(1)$ regret whenever the optimal threshold is non-zero, and achieves an $O(\ln^{1+ \epsilon}(N))$ regret when the optimal threshold is zero, where $N$ denotes the total number of arrival customers to the systems.  When the optimal policy is not unique, we specified a particular optimal policy to compare with, and proved that similar regret bounds hold for our learning algorithm.

	 In our analysis, we assumed Poisson arrivals and exponentially distributed services with fixed arrival and service rate. We would like to adapt our algorithm to more general arrival processes and service-time distributions like the models in  \cite{Lippman76} and \cite{Johansen80}, so that a small regret is obtained in these more general settings too, \Added{such as generalization to optimal admission control in an $M/G/1$ queue with our information structure. 
    This problem has received attention---see \cite{mg1queue}---under a different information structure where only the queue-length is observed by arrivals. Under this setting, the analytical optimal strategy for this problem is still unknown and may be time-varying; see \cite{mg1queue} for details. However, the problem may be tractable with our information structure as the Markov state---number in service and service time elapsed of customer currently being served---is observable and MDP theory could be applied. 
     } 
     Another possible direction is to consider a single queue with a buffer but with multiple servers like the model in \cite{Knudsen}.  Again, the aim would be to adapt our current learning algorithm to this setting as well, whilst achieving low regret. \Added{Finally, we conjectured that the order of the regret accumulated for the worst case choice of parameters would grow at least as $\Omega(\ln(N))$; see Remark \ref{rem:conjecture}. 
     Proving (or disproving) this conjecture is yet another problem for future work.
  }  

\noindent{\bfseries Acknowledgement:} We thank the anonymous AE and the referees for their insightful comments, which helped us improve our paper.
 
\bibliographystyle{informs2014}
\bibliography{ref4}

\end{document}